\DeclareMathOperator{\ex}{ex}
\DeclareMathOperator{\forb}{Forb}
\DeclareMathOperator{\s}{sc}
\DeclareMathOperator{\SC}{SC}
\DeclareMathOperator{\KC}{KC}
\begin{document}

\allowdisplaybreaks[2]

\newtheorem{theorem}{Theorem}[section]
\newtheorem{cor}[theorem]{Corollary}
\newtheorem{lemma}[theorem]{Lemma}
\newtheorem{fact}[theorem]{Fact}
\newtheorem{property}[theorem]{Property}
\newtheorem{proposition}[theorem]{Proposition}
\newtheorem{claim}[theorem]{Claim}
\newtheorem{conjecture}[theorem]{Conjecture}
\newtheorem{definition}[theorem]{Definition}
\theoremstyle{definition}
\newtheorem{example}[theorem]{Example}
\newtheorem{remark}[theorem]{Remark}
\newcommand\eps{\varepsilon}
\newcommand\la {\lambda}
\newcommand{\E}{\mathbb E}
\newcommand{\Var}{{\rm Var}}
\newcommand{\Prob}{\mathbb{P}}
\newcommand{\N}{{\mathbb N}}
\newcommand{\eqn}[1]{(\ref{#1})}
\newcommand{\gP}{\mathcal{P}}
\newcommand{\gQ}{\mathcal{Q}}
\newcommand{\dist}{d}
\newcommand{\carlos}[1]{{\bf [~Carlos Oct 28:\ } {\em #1}{\bf~]}}

\def\dcup{\dot\cup}

\title{Hypergraphs with many Kneser colorings  (Extended version)}
\author[C. Hoppen]{Carlos Hoppen}
\address{Instituto de Matem\'atica, UFRGS -- Avenida Bento Gon\c{c}alves, 9500, 91501--970 Porto Alegre, RS, Brazil}
\email{choppen@ufrgs.br}
\thanks{The first author acknowledges the support of FAPERGS (Proc.~10/0388-2), FAPESP (Proc. 2007/56496-3) and CNPq (Proc.~484154/2010-9).}

\author[Y. Kohayakawa]{Yoshiharu Kohayakawa}
\address{Instituto de Matem\'{a}tica e Estat\'{i}stica, Universidade de S\~{a}o Paulo, Rua do Mat\~{a}o, 1010, CEP 05508-090, S\~{a}o Paulo, Brazil}
\email{yoshi@ime.usp.br}
\thanks{The second author was partially supported by CNPq (Proc.~308509/2007-2, 484154/2010-9).}

\author[H. Lefmann]{Hanno Lefmann}
\address{Fakult\"at f\"ur Informatik, Technische Universit\"at Chemnitz,
Stra\ss{}e der Nationen 62, D-09107 Chemnitz, Germany}
\email{Lefmann@Informatik.TU-Chemnitz.de}

\begin{abstract}
For fixed positive integers $r, k$ and $\ell$ with $1 \leq \ell < r$ and an $r$-uniform hypergraph $H$, let $\kappa (H, k,\ell)$ denote the number of
$k$-colorings of the set of hyperedges of $H$ for which any two
hyperedges in the same color class intersect in at least $\ell$ elements. Consider the function $\KC(n,r,k,\ell)=\max_{H\in{\mathcal H}_{n}} \kappa (H, k,\ell) $,
where the maximum runs over the family ${\mathcal H}_n$ of all $r$-uniform hypergraphs on $n$ vertices.  In this paper, we determine the asymptotic behavior of the function $\KC(n,r,k,\ell)$ for every fixed $r$, $k$ and $\ell$ and describe the extremal hypergraphs.
This variant of a problem of Erd\H{o}s and Rothschild, who considered
edge colorings of graphs without a monochromatic triangle,
is related to the Erd\H{o}s--Ko--Rado Theorem
on intersecting systems of sets [Intersection Theorems for
Systems of Finite Sets, Quarterly Journal of Mathematics, Oxford Series, Series 2,
{\bf 12} (1961), 313--320].

\end{abstract}

\maketitle

\section{Introduction}
We consider  $r$-uniform hypergraphs $H=(V,E)$. A
\emph{hypergraph} $H =(V,E)$ is given by its \emph{vertex set} $V$ and
its set $E$ of \emph{hyperedges}, where $e \subseteq V$ for each hyperedge
$e \in E$, and $H =(V, E)$ is said to be \emph{$r$-uniform} if each hyperedge $e \in E$ has cardinality $r$. For a
fixed  $r$-uniform hypergraph $F$, an $r$-uniform ``host-hypergraph''
$H$ and an integer $k$, let $c_{k,F}(H)$
denote the number of
$k$-colorings of the set of hyperedges
of $H$ with no monochromatic copy of $F$ and
let $c_{k,F}(n)=\max \{c_{k,F}(H) \colon H \in {\mathcal H}_n\}$,
where  ${\mathcal H}_n$ is the family of
all $r$-uniform hypergraphs on $n$ vertices. For instance, if $H$ is a graph
and $F$ is a path of length two, then each color class has to be a matching and
$c_{k,F}(H)$ is the number of proper $k$-edge colorings
of $H$. Moreover, given an $r$-uniform
hypergraph $F$, let $\ex(n,F)$ be the usual
\emph{Tur\'an number} for $F$, i.e., the maximum
number of hyperedges in an $r$-uniform $n$-vertex hypergraph that contains no copy of $F$. A hypergraph for which maximality is achieved is said to be an \emph{extremal hypergraph for $\ex(n,F)$}.

Every coloring of the set of hyperedges of
any extremal hypergraph $H$ for $\ex(n,F)$ trivially contains no monochromatic copy of $F$ and,
hence,
$
c_{k,F}(n)\geq k^{\ex(n,F)}
$
for all $k\geq 2$. On the other hand, if $\forb_F(n)$ denotes the family of all
hypergraphs with vertex set $[n]=\{1,\ldots,n\}$ that contain no copy of $F$, every $2$-coloring of the set of hyperedges of a hypergraph $H$ on $[n]$ containing no monochromatic copy of $F$ gives rise
to a member of $\forb_F(n)$; thus
$
c_{2,F}(n) \leq |\forb_F(n)|
$.
The size of $\forb_F(n)$ was first studied by Erd\H{o}s,
Kleitman, and Rothschild~\cite{ErKlRo76}
for $F=K_3$, the triangle.
This has been extended  by Kolaitis, Pr\"omel,
and Rothschild~\cite{KoPrRo85,KoPrRo87} to the case when $F=K_{\ell}$ is a clique on $\ell$ vertices. For an arbitrary graph $F$,
Erd\H{o}s, Frankl, and R\"odl~\cite{ErFrRo86} proved
the upper bound
$
|\forb_F(n)|\leq 2^{\ex(n,F)+o(n^2)};
$
see also~\cite{BaBoSi04,BaBoSi08}.
The results from~\cite{ErFrRo86} have been further
extended by Nagle, R\"odl, and
Schacht~\cite{NaRo01,NaRoSch06}
to $r$-uniform hypergraphs, namely
$
    |\forb_F(n)|\leq 2^{\ex(n,F)+o(n^r)}
$. Thus,
for any $r$-uniform hypergraph $F$ we have
\begin{equation}\label{eq:simple}
2^{\ex(n,F)}\leq c_{2,F}(n)\leq 2^{\ex(n,F)+o(n^r)}.
\end{equation}

For $r=2$ and cliques $F=K_{t}$, Yuster~\cite{Yu96} for $t = 3$
and Alon, Balogh, Keevash, and Sudakov~\cite{ABKS04} for any fixed $t \geq 3$
showed that the lower bound in~\eqref{eq:simple} is the
exact value of $c_{2,K_{t}}(n)$ for $n$ sufficiently large,
as conjectured by Erd\H{o}s and Rothschild (see \cite{Er74}).
Moreover, for $3$-colorings,
Alon, Balogh, Keevash, and Sudakov~\cite{ABKS04} proved
that $c_{3,K_{t}}(n)=3^{\ex(n,K_{t})}$ for $n$ sufficiently
large. In both cases,
$k=2$ and $k= 3$, equality is achieved only
by the $(t-1)$-partite Tur\'an graph on $n$ vertices. However, it was
observed in~\cite{ABKS04} that $ c_{k,K_{t}}(n)\gg k^{\ex(n,K_t)}$ for any fixed $k\geq 4$ as $n$ tends to infinity. Very recently, Pikhurko and  Yilma \cite{PYxx} succeeded in describing those graphs on $n$ vertices that achieve  $ c_{4,K_{3}}(n)$ as well as $ c_{4,K_{4}}(n)$. However, for $k \geq 5$ colors, or $k=4$ colors and forbidden complete graphs $K_{\ell}$, $\ell > 4$,  the extremal graphs are not known. 

An extension of these results to hypergraphs has been given recently in
\cite{LPRS}  for the Fano plane $F$,
the unique $3$-uniform hypergraph
with seven hyperedges on seven vertices where
every pair of distinct vertices is contained in
exactly one hyperedge. Fundamental in this direction was the determination of the Tur\'{a}n number
$\ex(n,F)
=\binom{n}{3}-\binom{\lceil n/2\rceil}{3}-\binom{\lfloor n/2\rfloor}{3}
$,
achieved by Keevash and Sudakov \cite{KS05} and F\"uredi and Simonovits
\cite{FS05}. The results in \cite{LPRS}
show that for the Fano plane $F$, for $n$ sufficiently 
large,
in the case of $k$-colorings, $k \in \{2,3\}$,
every $3$-uniform hypergraph $H$ on $n$ vertices satisfies
$
c_{k,F}(H)\le k^{\ex(n,F)}
$.
Moreover, equality is attained by the unique extremal hypergraph for $\ex(n,F)$. Also, for fixed $k \geq 4$, the inequality $c_{k,F}(n)\gg k^{\ex(n,F)} $ holds as $n$ tends to infinity. Very recently, a similar phenomenon has been proved to hold in several other instances, see for example \cite{LP11} and \cite{LPS11}.

Here, we investigate a variant of the original problem of Erd\H{o}s
and Rothschild, where we forbid pairs of hyperedges of the same color
that share fewer than $\ell$ vertices, \emph{thus forcing every color class
to be $\ell$-intersecting}. Formally, for fixed integers $\ell, r$ with
$1 \leq \ell < r$, and $i \in \{0,\ldots,\ell-1\}$, let $F_{r,i}$ be the $r$-uniform hypergraph on $2r-i$ vertices with two hyperedges sharing exactly $i$ vertices, and let $\mathcal{B}_{r,\ell}=\{F_{r,i}:~i=0,\ldots,\ell-1\}$.
Following the notation above,
$c_{k,\mathcal{B}_{r,\ell}}(H)$
is the number of
$k$-colorings of the set of hyperedges of a hypergraph $H$ with no monochromatic copy of any $F \in \mathcal{B}_{r,\ell}$. Let $c_{k,\mathcal{B}_{r,\ell}}(n)=
\max\{ c_{k,
\mathcal{B}_{r,\ell}}(H) \colon H \in {\mathcal H}_n\}$, and
 set
$\KC(n,r,k,\ell)= c_{k,\mathcal{B}_{r,\ell}}(n)$ as well as
$\kappa (H, k,\ell) = c_{k,\mathcal{B}_{r,\ell}}(H)$. These $\mathcal{B}_{r,\ell}$-avoiding colorings with $k$ colors are called \emph{$(k,\ell)$-Kneser colorings}; as is well known, Lov\'{a}sz~\cite{lovasz} proved a conjecture of Kneser asserting that $c_{k,\mathcal{B}_{r,\ell}}(K_n^{(r)})=0$ when $\ell=1$, $n \geq k+ 2r -1$ and $K_n^{(r)}$ is the complete, $r$-uniform hypergraph on $n$ vertices. For developments in this line of research, see Ziegler~\cite{ziegler} and the references therein.

Our main concern here is to investigate which $n$-vertex $r$-uniform 
hypergraphs $H$ maximize $\kappa (H, k,\ell)$.
As one would expect, this problem is related to the well-known Erd\H{o}s--Ko--Rado Theorem~\cite{EKR}. Recall that, for $n$ large, the unique extremal hypergraph for $\ex(n,\mathcal{B}_{r,\ell})$ is the hypergraph on $n$ vertices whose hyperedges are all $r$-element subsets of $[n]$ containing a fixed $\ell$-element set. In line with the results in~\cite{ABKS04}, we obtain the following when colorings with two or three colors are considered. 
\begin{theorem} \label{otto2}
If $n \geq r > \ell$ are positive integers, then
\begin{equation}\label{frida4}
\KC(n,r,2,\ell) = 2^{\ex(n,\mathcal{B}_{r,\ell})}.
\end{equation}
Every $r$-uniform hypergraph $H$ on $[n]$ that is extremal for $\ex(n,\mathcal{B}_{r,\ell})$ achieves $\kappa(H,2,\ell)=\KC(n,r,2,\ell)$, and unless $\ell=1$ and $n=2r$, these are the single hypergraphs that achieve equality.
\end{theorem}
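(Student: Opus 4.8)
The plan is to recast each valid $2$-colouring of $E(H)$ as a proper $2$-colouring of an auxiliary ``conflict graph'' on the hyperedge set, and then to bound the number of its connected components by an Erd\H{o}s--Ko--Rado type argument. First I would dispose of the lower bound and the easy half of the characterisation: if $H$ is extremal for $\ex(n,\mathcal{B}_{r,\ell})$ then no member of $\mathcal{B}_{r,\ell}$ embeds in $H$, so \emph{every} $2$-colouring of $E(H)$ is a $(2,\ell)$-Kneser colouring, whence $\kappa(H,2,\ell)=2^{|E(H)|}=2^{\ex(n,\mathcal{B}_{r,\ell})}$. This already gives $\KC(n,r,2,\ell)\ge 2^{\ex(n,\mathcal{B}_{r,\ell})}$ and shows that every hypergraph extremal for $\ex(n,\mathcal{B}_{r,\ell})$ attains equality.

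For the matching upper bound I would attach to an arbitrary $r$-uniform $H$ on $[n]$ the graph $G=G(H)$ with vertex set $E(H)$ in which two hyperedges $e,f$ are adjacent whenever $|e\cap f|<\ell$. A $k$-colouring of $E(H)$ is a $(k,\ell)$-Kneser colouring precisely when each colour class is independent in $G$, i.e.\ when it is a proper $k$-colouring of $G$; hence $\kappa(H,2,\ell)=0$ if $G$ contains an odd cycle, and otherwise $\kappa(H,2,\ell)=2^{c(G)}$, where $c(G)$ is the number of components of $G$. Picking one hyperedge $v_i$ from each component of $G$, any two of the chosen sets lie in different components, hence are non-adjacent in $G$, i.e.\ $\ell$-intersecting; so $\{v_1,\dots,v_{c(G)}\}$ is an $\ell$-intersecting family of $r$-subsets of $[n]$, giving $c(G)\le\ex(n,\mathcal{B}_{r,\ell})$ and therefore $\kappa(H,2,\ell)\le 2^{\ex(n,\mathcal{B}_{r,\ell})}$ in every case. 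Together with the previous paragraph this proves \eqref{frida4}.

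The remaining task is to decide when equality holds, and this is where the real work lies. Put $m=\ex(n,\mathcal{B}_{r,\ell})$; equality forces $G$ to be bipartite with $c(G)=m$. If $G$ is edgeless then $E(H)$ is itself $\ell$-intersecting with $|E(H)|=c(G)=m$, so $H$ is extremal for $\ex(n,\mathcal{B}_{r,\ell})$. If $G$ has an edge, I would first show that no component has three or more vertices: such a component would have two vertices $x,x'$ on one side of its bipartition, necessarily with $|x\cap x'|\ge\ell$, and adding $x$ and $x'$ to a transversal of the other $m-1$ components would produce an $\ell$-intersecting family of size $m+1$, which is impossible. Hence every component is a single vertex or a single edge, and at least one component is an edge $\{a,b\}$ with $|a\cap b|<\ell$. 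Taking a transversal $\mathcal{D}$ of the remaining $m-1$ components, both $\mathcal{D}\cup\{a\}$ and $\mathcal{D}\cup\{b\}$ are \emph{distinct} maximum $\ell$-intersecting families sharing the $(m-1)$-element family $\mathcal{D}$.

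So everything reduces to the claim that two distinct maximum $\ell$-intersecting families of $r$-subsets of $[n]$ can share $m-1$ sets only when $\ell=1$ and $n=2r$. Note that $|a\cap b|<\ell$ already forces $n\ge 2r-\ell+1$, so for the smaller values of $n$ the graph $G$ is always edgeless and there is nothing to prove. For $\ell=1$ this is a short consequence of the Erd\H{o}s--Ko--Rado theorem and its uniqueness statement: when $n>2r$ both $\mathcal{D}\cup\{a\}$ and $\mathcal{D}\cup\{b\}$ are stars, and since $\mathcal{D}$ has $m-1$ sets they must be stars with the same centre, forcing $a=b$. For $\ell\ge 2$ I would invoke instead the Ahlswede--Khachatrian complete intersection theorem together with its uniqueness part, which describes the maximum $\ell$-intersecting families for every $n$ and shows that two distinct ones differ in at least two sets. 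Finally I would record why $\ell=1$, $n=2r$ is genuinely exceptional: there two $r$-subsets of $[2r]$ are disjoint iff they are complementary, so $G(H)$ is always a matching (in particular bipartite) and $c(G(H))=m$ exactly when $E(H)$ meets every complementary pair $\{A,[2r]\setminus A\}$ --- a class of hypergraphs strictly containing the maximal intersecting families, and containing for instance $K_{2r}^{(r)}$ itself. The main obstacle is precisely this equality analysis for $\ell\ge 2$: it cannot use the ``star'' description of the extremal hypergraphs, which holds only for large $n$, and instead needs the full structural picture of maximum $\ell$-intersecting families across the whole range $n\ge 2r-\ell+1$.
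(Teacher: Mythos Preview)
Your argument is correct, and the conflict-graph packaging is a clean alternative to the paper's route. The paper obtains the upper bound by fixing a \emph{maximal} $\ell$-intersecting subfamily $F\subseteq E(H)$ and observing that, in a $2$-colouring, the colour of every hyperedge outside $F$ is forced by some member of $F$; hence $\kappa(H,2,\ell)\le 2^{|F|}\le 2^{\ex(n,\mathcal{B}_{r,\ell})}$. Your version, recording the exact value $\kappa(H,2,\ell)=2^{c(G)}$ (or $0$) and then bounding $c(G)$ by exhibiting a transversal, is slightly sharper and makes the equality analysis more transparent: your observation that every component of $G$ has at most two vertices is a pleasant structural fact that the paper does not isolate.

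Where the two proofs converge is exactly at the step you flagged as the main obstacle. Your claim that, outside the case $\ell=1$ and $n=2r$, two distinct maximum $\ell$-intersecting families cannot share $m-1$ sets is equivalent to the following: if $\mathcal{F}$ is a maximum $\ell$-intersecting family and $b$ is an $r$-set with $|a\cap b|<\ell$ for some $a\in\mathcal{F}$, then $b$ fails to $\ell$-intersect at least \emph{two} members of $\mathcal{F}$. This does not drop out of the Ahlswede--Khachatrian uniqueness statement by itself; the paper proves it as a standalone lemma (their Lemma~\ref{auxlem1}) via a case analysis on the parameter $s$ in the description of $\mathcal{F}_s$, and the paper's uniqueness argument then applies this lemma directly to any extra hyperedge $e\notin F$. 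So your proof needs precisely the same technical ingredient, just invoked at a different point in the argument.
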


\begin{theorem}\label{thm_2and3}
For all positive integers $r$ and $\ell$, there exists $n_0>0$ such that, for $n>n_0$, 
\begin{equation}\label{frida2}
\KC(n,r,3,\ell) = 3^{\ex(n,\mathcal{B}_{r,\ell})}.
\end{equation}
Moreover, for $n>n_0$, the $r$-uniform hypergraphs $H$ achieving equality in \eqref{frida2} correspond to the extremal configurations for $\ex(n,\mathcal{B}_{r,\ell})$.
\end{theorem}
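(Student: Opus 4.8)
The plan is to follow the strategy of Alon, Balogh, Keevash and Sudakov~\cite{ABKS04} for the graph case, transported to intersecting set systems. The lower bound in \eqref{frida2} is free: for the star $S$ consisting of all $r$-subsets of $[n]$ through a fixed $\ell$-set, any two hyperedges meet in $\ell$ vertices, so every $3$-colouring of $E(S)$ is a $(3,\ell)$-Kneser colouring and $\kappa(S,3,\ell)=3^{e(S)}=3^{m}$, where I write $m:=\ex(n,\mathcal{B}_{r,\ell})$ (which equals $\binom{n-\ell}{r-\ell}$ for $n$ large). For the upper bound and the characterisation I would take $H$ on $[n]$ with $\kappa(H,3,\ell)\ge 3^m$ and show $H$ must be a star. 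The organising tool is the \emph{conflict graph} $\mathcal{G}=\mathcal{G}_H$ on vertex set $E(H)$ in which two hyperedges are adjacent exactly when they share fewer than $\ell$ vertices: a $3$-colouring of $E(H)$ is a Kneser colouring iff it is a proper $3$-colouring of $\mathcal{G}$, so $\kappa(H,3,\ell)=P_{\mathcal{G}}(3)$, the chromatic polynomial of $\mathcal{G}$ at $3$. The independent sets of $\mathcal{G}$ are precisely the $\ell$-intersecting subfamilies of $E(H)$, so the Erd\H{o}s--Ko--Rado theorem \cite{EKR} (in its $\ell$-intersecting form, valid for $n$ large) gives $\alpha(\mathcal{G})\le m$, with equality forcing that independent set to be a star; and I will use freely the elementary bound $P_{\mathcal{G}}(3)\le 3^{\alpha(\mathcal{G})}2^{e(H)-\alpha(\mathcal{G})}$, which follows by colouring a maximum independent set arbitrarily --- every remaining vertex then has a coloured neighbour, hence at most two choices.

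The first step, and the main obstacle, is to show that $\kappa(H,3,\ell)\ge 3^m$ forces $e(H)\le(1+\varepsilon)m$ for any fixed $\varepsilon>0$ (and $n$ large). The intended route is the hypergraph container method in the form tailored to intersecting families: one obtains a collection $\mathcal{C}$ of at most $2^{o(m)}$ containers, each a subfamily of $\binom{[n]}{r}$ of size at most $(1+\varepsilon')m$, covering every $\ell$-intersecting family, in particular every colour class of a Kneser colouring. Then $P_{\mathcal{G}}(3)\le \sum_{C_1,C_2,C_3\in\mathcal{C}}\prod_{e\in E(H)}|\{i:e\in C_i\}|$, and writing $a,b,c$ for the numbers of hyperedges of $H$ lying in all three, in exactly two, and in exactly one of $C_1,C_2,C_3$, the bound $\sum_i|C_i|\le 3(1+\varepsilon')m$ becomes $b+2c\ge 3(e(H)-(1+\varepsilon')m)$, whence $\prod_e|\{i:e\in C_i\}|=3^a2^b\le 3^{(1+\varepsilon')m}(8/9)^{\,e(H)-(1+\varepsilon')m}$. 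Since $|\mathcal{C}|^3=2^{o(m)}$, this forces $e(H)\le(1+\varepsilon)m$ whenever $\kappa(H,3,\ell)\ge 3^m$. I expect essentially all the work to be here, since neither the stability form of Erd\H{o}s--Ko--Rado nor the crude bound $e(H)\le 3m$ (from $\alpha(\mathcal{G})\le m$ applied to the three colour classes) controls the intermediate regime in which $e(H)$ is a large constant multiple of $m$; an alternative to containers would be a direct count of the number of near-extremal $\ell$-intersecting families of comparable accuracy.

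With $e(H)\le(1+\varepsilon)m$ in hand the rest is routine. Feeding it into $3^m\le P_{\mathcal{G}}(3)\le 3^{\alpha(\mathcal{G})}2^{e(H)-\alpha(\mathcal{G})}$ yields $m-\alpha(\mathcal{G})\le \frac{\log 2}{\log(3/2)}\varepsilon m=O(\varepsilon m)$, so $E(H)$ contains an $\ell$-intersecting family $I$ of size at least $(1-O(\varepsilon))m$; by the stability version of the Erd\H{o}s--Ko--Rado theorem $I$, and hence (using $e(H)\le(1+\varepsilon)m$) all but $O(\varepsilon m)$ of $E(H)$, lies within a star $S_L$. To pass from ``close to $S_L$'' to ``inside $S_L$'' I would argue by contradiction: if $f\in E(H)\setminus S_L$, a direct count shows $f$ shares fewer than $\ell$ vertices with all but $O(n^{r-\ell-1})=o(m)$ hyperedges of $S_L$, so $f$ is adjacent in $\mathcal{G}$ to all but $O(\varepsilon m)$ of the vertices of $I$; fixing the colour of $f$ in a proper $3$-colouring then forbids that colour on $(1-O(\varepsilon))m$ pairwise non-adjacent vertices of $I$, giving $P_{\mathcal{G}}(3)\le 3\cdot 2^{(1-O(\varepsilon))m}\cdot 3^{O(\varepsilon m)}\le 2^{(1+O(\varepsilon))m}<3^m$ for $\varepsilon$ a small absolute constant and $n$ large --- a contradiction. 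Hence $E(H)\subseteq S_L$, so every $3$-colouring of $E(H)$ is a Kneser colouring and $3^m\le\kappa(H,3,\ell)=3^{e(H)}$ forces $e(H)=m=|S_L|$, i.e.\ $H=S_L$. Since for $n>n_0$ the stars are exactly the extremal hypergraphs for $\ex(n,\mathcal{B}_{r,\ell})$, this gives both \eqref{frida2} and the stated description of the extremal $H$.
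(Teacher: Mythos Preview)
Your plan is coherent, and the endgame (once $e(H)\le(1+\varepsilon)m$ is established) is correct, but the route is considerably heavier than the paper's and leans on two external ingredients you would have to supply: a container lemma for $\ell$-intersecting families with $2^{o(m)}$ containers each of size at most $(1+\varepsilon')m$, and a stability form of Erd\H{o}s--Ko--Rado for $\ell$-intersecting families. Neither is immediate for general $\ell$, and you rightly flag the first as where ``essentially all the work'' lies.

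The paper bypasses both. Its organising observation (Lemma~\ref{small_c}) is that any $(3,\ell)$-colourable $H$ has an $\ell$-cover $C=\{t_1,\dots,t_c\}$ with $c\le 3\binom{r}{\ell}$: a maximal clique in your conflict graph $\mathcal{G}$ has at most three vertices, and the $\ell$-subsets of its members cover every hyperedge. This bounded cover replaces your container step entirely. If $c=1$ then $H$ sits inside a star and $\kappa(H,3,\ell)\le 3^{|E(H)|}\le 3^{\binom{n-\ell}{r-\ell}}$, with equality only for the full star. If $c\ge 2$, the paper calls a colour $\sigma$ \emph{substantial for $t_i$} when more than $L=O(n^{r-\ell-1})$ hyperedges of colour $\sigma$ meet $\bigcup_j t_j$ exactly in $t_i$; a short count (Lemma~\ref{L1}) shows that every hyperedge of a substantial colour must then contain $t_i$, so each colour is substantial for at most one $t_i$. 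If all three colours are substantial, minimality of the cover forces them to be spread over all $c\ge 2$ elements of $C$ (Lemma~\ref{L2}); but the unique optimal solution of $\max\prod s_i$ subject to $\sum s_i=3$ is the single part $(3)$, which requires $c=1$. Hence for $c\ge 2$ every colouring has at least one non-substantial colour, and a direct count (Lemma~\ref{L3}) gives $\kappa(H,3,\ell)\le 3^{(1-\gamma)\binom{n-N}{r-\ell}}<3^{\binom{n-\ell}{r-\ell}}$ for $n$ large.

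In short, the paper trades your container-plus-stability pipeline for an elementary pigeonhole on colours relative to a bounded $\ell$-cover, using nothing beyond the bare Erd\H{o}s--Ko--Rado bound. Your approach, if the container lemma can be supplied, has the merit of fitting into a general machinery that handles many forbidden configurations uniformly; the paper's approach is self-contained, avoids any appeal to stability, and is much shorter for this particular problem.
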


In the case of arbitrary $k \geq 4$, we obtain the asymptotic behavior of  $\KC(n,r,k,\ell)$ as $n$ tends to infinity for $r$ and $\ell$ fixed with $\ell < r$, and we describe the extremal hypergraphs. The following definition is important for our purposes.

\begin{definition}\label{def_extremal}
For integers $k,r \geq 2$, $1 \leq \ell <r$, $c \geq 1$ and
$n \geq \max \{r,c \ell\}$, let $C$ be a set of cardinality $c$ whose
elements are $\ell$-subsets of $[n]= \{ 1, \ldots , n\}$.
The \emph{$(C,r)$-complete}
\emph{hypergraph $H_{C,r}(n)$} is the hypergraph with vertex set $[n]$ whose hyperedges are all the $r$-subsets of $[n]$
containing some element of $C$ as a subset. If $C$ is a set of $
c(k)=\lceil k/3 \rceil$ mutually disjoint $\ell$-sets,
then the hypergraph $H_{C,r}(n)$ is denoted by
$H_{n,r,k,\ell}$.
\end{definition}

One of the main results in our work is that the hypergraph $H_{n,r,k,\ell}$ is always asymptotically close to being optimal.
\begin{theorem}\label{Tint1}
Let $r \geq 2$, $k \geq 2$ and $1 \leq \ell < r$ be fixed integers. Then
$$\KC(n,r,k,\ell)=(1+o(1)) \cdot
\kappa(H_{n,r,k,\ell},k,\ell),$$ where $o(1)$ is
a function that tends to $0$ as $n$ tends to infinity.
\end{theorem}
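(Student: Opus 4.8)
The lower bound $\KC(n,r,k,\ell)\ge\kappa(H_{n,r,k,\ell},k,\ell)$ is immediate since $H_{n,r,k,\ell}\in\mathcal H_n$, so the content is the matching upper bound, and I would prove it by induction on $k$. For $k=1$ one has $\KC(n,r,1,\ell)=1=\kappa(H_{n,r,1,\ell},1,\ell)$, and for $k\in\{2,3\}$ one has $c(k)=1$, so $H_{n,r,k,\ell}$ is a single $\ell$-star $S_L=\{A\in\binom{[n]}{r}:L\subseteq A\}$, which for $n$ large is the extremal hypergraph for $\ex(n,\mathcal B_{r,\ell})$ and all of whose $k$-colourings are $\mathcal B_{r,\ell}$-free; thus $\kappa(H_{n,r,k,\ell},k,\ell)=k^{\ex(n,\mathcal B_{r,\ell})}=\KC(n,r,k,\ell)$ by Theorems~\ref{otto2} and~\ref{thm_2and3}. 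Fix $k\ge4$; the goal is to show $\kappa(H,k,\ell)\le(1+o(1))\kappa(H_{n,r,k,\ell},k,\ell)$ for every $H\in\mathcal H_n$, and the plan is to split the $\mathcal B_{r,\ell}$-free $k$-colourings of $H$ into those that have a ``small'' colour class and those that do not.

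The key input is a stability statement for intersecting families: for $n$ large in terms of $r$, an $\ell$-intersecting family of $r$-subsets of $[n]$ that is not contained in an $\ell$-star $S_L$ has at most $g(n)=O(n^{r-\ell-1})$ members (this follows from the Ahlswede--Khachatrian theorem together with a supersaturation/stability estimate, or directly by compressions, $r$ and $\ell$ being fixed). A $\mathcal B_{r,\ell}$-free colouring of $H$ with some colour class of size at most $g(n)$ (in particular with an empty class) is determined by the choice of that colour, of its class ($2^{o(n^{r-\ell})}$ possibilities, since $g(n)=o(n^{r-\ell})$), and of a $\mathcal B_{r,\ell}$-free $(k-1)$-colouring of the rest of $H$; hence there are at most $2^{o(n^{r-\ell})}\,\KC(n,r,k-1,\ell)$ of them. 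By the induction hypothesis this equals $(1+o(1))2^{o(n^{r-\ell})}\,\kappa(H_{n,r,k-1,\ell},k-1,\ell)$, and a direct computation gives $\kappa(H_{n,r,k-1,\ell},k-1,\ell)=\exp(-\Omega(n^{r-\ell}))\,\kappa(H_{n,r,k,\ell},k,\ell)$ — the point being that increasing $k-1$ to $k$ multiplies $\max\{\prod_{j=1}^{\lceil k/3\rceil}k_j:\sum_j k_j=k\}$ by a factor $\ge4/3$ — so all such colourings together number $o(\kappa(H_{n,r,k,\ell},k,\ell))$.

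It remains to bound the number of $\mathcal B_{r,\ell}$-free colourings of $H$ in which every colour class has size larger than $g(n)$. By the stability statement every colour class then lies in a single $\ell$-star $S_{L_i}$, so every hyperedge of $H$ contains some $L_i$ and $H$ is a subhypergraph of $H_{C,r}(n)$ for $C=\{L_1,\dots,L_k\}$, $|C|\le k$. Grouping these colourings by $C$ and extending each of them canonically to a $\mathcal B_{r,\ell}$-free colouring of $H_{C,r}(n)$ yields, for each fixed $C$, at most $\kappa(H_{C,r}(n),k,\ell)$ colourings. A finer count, tracking how many ``private'' hyperedges of each $S_L$ ($L\in C$) actually lie in $H$, shows this number is $o(\kappa(H_{n,r,k,\ell},k,\ell))$ unless $|C|=c(k)=\lceil k/3\rceil$ and $H$ contains all but $O(1)$ private hyperedges of each $S_L$, $L\in C$; the last condition pins $C$ down uniquely for a given $H$, so the polynomially many other choices of $C$ contribute $o(\kappa(H_{n,r,k,\ell},k,\ell))$ in total. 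For the surviving $C$ one uses the concluding optimisation $\kappa(H_{C,r}(n),k,\ell)\le\kappa(H_{n,r,k,\ell},k,\ell)$: up to a $(1+o(1))$ factor, $\kappa(H_{C,r}(n),k,\ell)$ equals $\sum\binom{k}{(k_L)_{L\in C}}\prod_{L\in C}k_L^{\binom{n-\ell}{r-\ell}}$ summed over partitions $k=\sum_L k_L$, which is largest when $C$ consists of $c(k)$ pairwise disjoint $\ell$-sets and the $k_L$ are as close to $3$ as possible — because $m^{1/m}$ is maximised over the positive integers at $m=3$ and because overlaps between intersecting $\ell$-sets only decrease the count — at which point it equals $\kappa(H_{n,r,k,\ell},k,\ell)$.

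Combining the two cases gives $\kappa(H,k,\ell)\le(1+o(1))\kappa(H_{n,r,k,\ell},k,\ell)$, which together with the trivial lower bound proves the theorem, and the same analysis identifies the extremal configurations. The main obstacle is obtaining the \emph{sharp} factor $1+o(1)$ rather than a mere $2^{o(n^{r-\ell})}$ factor: for a given $H$ there are polynomially many candidate star-systems $C$, and one must show that essentially only one of them supports a non-negligible number of colourings. This rests on the quantitatively strong form of the Erd\H{o}s--Ko--Rado/Ahlswede--Khachatrian stability above (``not in a star $\Rightarrow$ at most $O(n^{r-\ell-1})$ sets'') and on the fact that a hypergraph cannot be almost-covered by two genuinely different small systems of $\ell$-stars; arranging the bookkeeping so that all error terms in both cases collapse to $1+o(1)$ is the delicate part of the argument.
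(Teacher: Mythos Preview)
Your overall plan—separate off colourings with a small colour class and reduce the rest to ``star-like'' colourings indexed by a system $C$ of $\ell$-sets, then optimise over $C$—is essentially the paper's strategy, and your inductive treatment of the small-class case is a legitimate alternative to the paper's direct bookkeeping with ``substantial'' colours (Lemmas~\ref{L1}--\ref{L3}).

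The genuine gap is in the ``concluding optimisation''. Two things fail there.
\begin{itemize}
\item The approximation $\kappa(H_{C,r}(n),k,\ell)=(1+o(1))\sum\binom{k}{(k_L)}\prod_{L\in C}k_L^{\binom{n-\ell}{r-\ell}}$ is not valid at the $(1+o(1))$ level once a hyperedge can contain several cover elements (i.e.\ when $r\ge 2\ell$, or when elements of $C$ overlap). Your formula equals $\prod_e\prod_{L\subset e}k_L$, whereas the true star-colouring count is $\prod_e\sum_{L\subset e}k_L$; for hyperedges $e$ with $|\{L\in C:L\subset e\}|\ge 2$ these differ by a bounded factor strictly larger than $1$, and there are $\Theta(n^{r-2\ell})$ such $e$ when $C$ is disjoint and $r\ge 2\ell$. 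Hence your formula overshoots $\kappa(H_{n,r,k,\ell},k,\ell)$ by a factor $c^{\Theta(n^{r-2\ell})}$ with $c>1$, and the chain ``star-like colourings of $H$ $\le$ formula $\le (1+o(1))\kappa(H_{n,r,k,\ell},k,\ell)$'' breaks at the last inequality.
\item The assertion that ``overlaps between intersecting $\ell$-sets only decrease the count'' is actually false: for $k=4$ and $\ell>1$ the paper shows (Theorem~\ref{theo1_inter}) that the cover with $|t_1\cap t_2|=\ell-1$ yields \emph{strictly more} $(4,\ell)$-Kneser colourings than the disjoint cover defining $H_{n,r,4,\ell}$. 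So one cannot deduce $\kappa(H_{C,r}(n),k,\ell)\le\kappa(H_{n,r,k,\ell},k,\ell)$ by arguing that disjoint $C$ is best.
\end{itemize}

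What actually carries Theorem~\ref{Tint1} is not an inequality but an \emph{equality}: the exact star-colouring approximant $\alpha(n,r,k,\ell)=\sum_{s\in\mathcal S(k)}\sum_{P\in\mathcal P_s}\prod_e\sum_{t_i\subset e}s_i$ takes the \emph{same} value for every hypergraph in the candidate family, including both the true extremal one and $H_{n,r,k,\ell}$ (Lemma~\ref{Lql}). For $k=4$ this is the short computation~\eqref{klm2} showing that the count is $6\cdot 4^{\binom{n-\ell}{r-\ell}}$ irrespective of $|t_1\cap t_2|$; for $k\ge5$ with $r<2\ell-1$ it holds because each hyperedge contains exactly one cover element; and for $k\ge5$ with $r\ge2\ell-1$ the candidate is unique anyway. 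Combined with Lemmas~\ref{L5} and~\ref{L6} this gives $\kappa(H_{n,r,k,\ell},k,\ell)=(1+o(1))\alpha=(1+o(1))\KC(n,r,k,\ell)$. Your sketch is salvaged by replacing the flawed comparison step with this equality-of-$\alpha$ observation; the rest of your outline then goes through.
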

In spite of Theorem~\ref{Tint1}, it turns out that $H_{n,r,k,\ell}$ is \emph{not} extremal when either $k=4$ and $\ell>1$ or $k \geq 5$ and $r<2\ell-1$.  For this and related comments, see Theorem~\ref{Tint2}, Theorem~\ref{Tint3}(ii) and (iii), and Section~\ref{sec_final}.

It will be evident in the proof of Theorem \ref{Tint1} that the quest for the
asymptotic value of $\KC(n,r,k,\ell)$ and the characterization of the extremal
hypergraphs are strongly intertwined. As a matter of fact, we
focus on two special classes of Kneser colorings, which we prove to contain all but a negligible fraction of all Kneser colorings. On the one hand, the structure of the colorings in such classes leads to a series of symmetry properties of the extremal hypergraphs. On the other hand, these properties allow us to estimate accurately the number of Kneser colorings in each such special class, leading to the desired asymptotic value. More precisely, we fully describe the hypergraphs that are optimal for sufficiently large $n$ by making use of the following somewhat cumbersome definition.
\begin{definition}\label{optimal_hyper}
Fix integers $n$, $r\geq 2$, $k \geq 2$ and $1 \leq \ell < r$. The family of \emph{candidate hypergraphs} $\mathcal{H}_{r,k,\ell}(n)$ consists of all $n$-vertex $r$-uniform hypergraphs $H$ defined as follows.
\begin{itemize}
\item[(a)] If $k \in \{2,3\}$ or if $k \geq 5$ and $r \geq 2\ell-1$, then $H$ is isomorphic to $H_{n,r,k,\ell}$.

\item[(b)] If $k=4$, then $H$ is $H_{C,r}(n)$ for  $C=\{t_1,t_2\}$ with $|t_1 \cap t_2| = \ell - 1$, 
where the sets $t_i$ are $\ell$-subsets of the vertex set.

\item[(c)] If $k \geq 5$ and $r<2 \ell-1$, then $H$ is $H_{C,r}(n)$ for $C=\{t_1,\ldots,t_{c(k)}\}$, and each $t_i$ is an $\ell$-subset of the vertex set and 
$|t_i \cup t_j|>r$, for all $1\leq i <j \leq c(k)=\left\lceil k/3 \right\rceil$.
\end{itemize}
\end{definition}
Note that, if $r$, $k$ and $\ell$ are as in (a) and (b), the family $\mathcal{H}_{r,k,\ell}(n)$ contains a single hypergraph up to isomorphism.
\begin{theorem}\label{Tint2}
Given $r$, $k$ and $\ell$, there is $n_0>0$ such that,
for $n>n_0$,  if
$$\kappa(H,k,\ell)=\KC(n,r,k,\ell),$$
then $H \in \mathcal{H}_{r,k,\ell}(n)$.
\end{theorem}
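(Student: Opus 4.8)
The plan is to build on the structural backbone already provided by Theorem~\ref{Tint1} together with the stability machinery that underlies its proof. The proof will proceed in three phases. In the first phase I would pin down, for a near-optimal hypergraph $H$, the coarse structure of almost all of its $(k,\ell)$-Kneser colorings. The heuristic is that in an efficient coloring each color class should be a maximal $\ell$-intersecting family, hence (for $n$ large, by the Erd\H{o}s--Ko--Rado theorem and its Hilton--Milner type refinements) a ``star'' consisting of all $r$-sets of $H$ through a fixed $\ell$-set, or else something whose trace on $H$ is very small. One then shows that all but a $k^{-\Omega(n^{r-1})}$ fraction of Kneser colorings of $H$ arise by picking, for each of the $k$ colors, one of a bounded number of candidate $\ell$-sets $t$, assigning to that color all hyperedges of $H$ containing $t$, and distributing the remaining $O(n^{r-1})$ ``exceptional'' hyperedges arbitrarily. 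Call these the \emph{structured} colorings; the count of structured colorings of $H$ is controlled by the set $C(H)$ of $\ell$-sets $t$ such that $H$ contains all (or all but $o(n^{r-1})$ of) the $r$-sets through $t$.

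In the second phase I would extract the numerical optimization. Writing $m$ for the number of hyperedges of $H$ and letting $C$ be the collection of usable $\ell$-sets, the number of structured Kneser colorings is, up to a $(1+o(1))$ factor,
\begin{equation*}
\sum_{\varphi\colon [k]\to C}\; k^{\,m-|\bigcup_{t\in\mathrm{im}\,\varphi}E_t(H)|},
\end{equation*}
where $E_t(H)$ is the set of hyperedges of $H$ through $t$; the dominant term corresponds to choices of $\varphi$ whose image covers as many hyperedges as possible, i.e.\ to an assignment of the $k$ colors to $\ell$-sets $t_1,\dots,t_c$ whose stars cover all of $E(H)$ with each star used by at least one color so that no color is wasted. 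Maximizing the resulting expression over all admissible $H$ and all such assignments is a finite optimization once one normalizes $m=|E(H)|$: one is trading off the number $c$ of distinct $\ell$-sets (more sets means more hyperedges forced into $H$, lowering the exponent, but also more assignments $\varphi$) against the number of surjections from $[k]$ onto a $c$-set (which grows with $c$). A short calculation of the kind already implicit in the proof of Theorem~\ref{Tint1} shows the optimum is attained at $c=c(k)=\lceil k/3\rceil$, and that the incremental cost of using $c$ sets $t_1,\dots,t_c$ is governed precisely by $|E(H_{C,r}(n))|=|\bigcup_i \{e\in\binom{[n]}{r}: t_i\subseteq e\}|$, which by inclusion--exclusion is minimized by making the pairwise unions $t_i\cup t_j$ \emph{as large as possible} when $r<2\ell-1$ (since then $t_i\cup t_j$ can have size $>r$ and contribute nothing), forced to have $|t_i\cap t_j|=\ell-1$ when $k=4$ so that a single overlap term of order $n^{r-\ell-1}$ is subtracted, and irrelevant (disjointness is optimal, giving $H_{n,r,k,\ell}$) when $r\ge 2\ell-1$ and $k\ne 4$ or $k\le3$. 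This is exactly the trichotomy of Definition~\ref{optimal_hyper}.

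In the third phase I would upgrade ``$H$ is close to a candidate'' to ``$H$ is a candidate''. Here one argues by contradiction: if $\kappa(H,k,\ell)=\KC(n,r,k,\ell)$ but $H\notin\mathcal H_{r,k,\ell}(n)$, then either the set $C(H)$ of usable $\ell$-sets is not of the prescribed form, or $H$ contains hyperedges not covered by the stars over $C(H)$, or $H$ omits some hyperedge through a $t_i\in C(H)$. In the first two cases the main term in the count is strictly smaller than for the appropriate candidate by a factor bounded away from $1$, while in the last case we may add the missing hyperedges to obtain $H'\supsetneq H$ with $\kappa(H',k,\ell)\ge\kappa(H,k,\ell)$ and strictly more structured colorings (each missing hyperedge through a star-center can now be validly placed in that color), so $H$ was not extremal --- a contradiction. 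A delicate point is handling hyperedges through $t_i$ whose removal was ``compensated'' by exceptional freedom; one shows that the number of structured colorings lost exceeds the number of exceptional colorings gained whenever $n$ is large, because the former is a global $k^{\Theta(n^{r-1})}$ factor while the latter is only a $k^{O(1)}$ per-hyperedge factor. Combining, the only hypergraphs surviving are exactly those in $\mathcal H_{r,k,\ell}(n)$.

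**Main obstacle.** The hardest part will be the exact stability step in phase three: quantifying precisely enough that \emph{every} deviation from the candidate structure --- in particular small perturbations such as replacing one $\ell$-set in $C$ by a nearby one, or a bounded number of extra/missing hyperedges --- strictly decreases the Kneser-coloring count, rather than merely decreasing the leading exponent. This requires the stability-type estimates (Hilton--Milner, and a robust EKR-stability for the hypergraph $H$ itself) to be pushed to a multiplicative $(1-\Omega(1))$ loss, and it is where the boundedness of the number of candidate $\ell$-sets and the $k^{-\Omega(n^{r-1})}$ control on unstructured colorings from the proof of Theorem~\ref{Tint1} must be invoked in full strength.
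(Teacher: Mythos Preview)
Your overall architecture --- reduce to ``structured'' (star-like) colorings, optimize the cover configuration, then prove completeness --- matches the paper's, but the proposal has a genuine gap and a wrong formula that together would make Phase~2 fail.

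The formula $\sum_{\varphi} k^{m-|\bigcup_{t\in\mathrm{im}\,\varphi}E_t(H)|}$ is not the count of structured colorings. Once $\varphi\colon[k]\to C$ is fixed, a hyperedge $e$ has exactly $|\{i:t_{\varphi(i)}\subset e\}|$ legal colors, so the count is $\prod_{e}\sum_{t_i\subset e}s_i$ where $s_i=|\varphi^{-1}(t_i)|$; uncovered hyperedges have zero legal colors, not $k$. Consequently the optimization is \emph{not} about minimizing $|E(H_{C,r}(n))|$ via inclusion--exclusion (indeed, fewer hyperedges means fewer colorings, so your heuristic is backwards). The paper instead compares $\prod_e\beta_e$ for two cover configurations directly via a swapping argument (Lemma~\ref{L4}): replace a vertex $v\in t_i\cap t_j$ by a fresh vertex $w$ and track how the factors $\beta_e$ change across the four affected edge-classes, using an elementary inequality (Lemma~\ref{L7}) to show the product strictly grows.

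The more serious gap is the case $k=4$, $\ell>1$. With cover $\{t_1,t_2\}$ and $|t_1\cap t_2|=y$, the star-coloring count is exactly $6\cdot 4^{\binom{n-\ell}{r-\ell}}$ \emph{independently of $y$} (see~\eqref{klm2}), so no optimization over structured colorings in your sense can isolate $y=\ell-1$. The paper pins down $y=\ell-1$ (Theorem~\ref{theo1_inter}) only by passing to \emph{generalized} star colorings --- allowing the $3{+}1$ split of colors, not just the optimal $2{+}2$ --- whose second-order contribution $8\cdot 3^{\binom{n-\ell}{r-\ell}}(4/3)^{\binom{n-2\ell+y}{r-2\ell+y}}$ does depend on $y$ and is maximized at $y=\ell-1$, and then separately bounding non-star colorings below this second-order term. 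This is exactly why $\mathcal P_{n,r,4,\ell}$ is unstable for $\ell>1$ (Theorem~\ref{Tint3}(ii)): the leading term does not see the intersection, so your Phase~2 as written cannot conclude part~(b) of Definition~\ref{optimal_hyper}.
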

Theorem~\ref{Tint2} immediately implies that, for $n$ sufficiently large, the
extremal hypergraph is unique when either $k=4$ or $k \geq 5$
and $r \geq 2\ell-1$, as $\mathcal{H}_{r,k,\ell}(n)$ contains a single hypergraph up to isomorphism. In particular, given any positive integer $k$,
the problem of finding the hypergraphs with most $(k,1)$- as well as 
$(k,2)$-Kneser colorings is completely solved for $n$ sufficiently large.

Moreover, if $r<2\ell-1$, let $\mathcal{C}_{k,\ell}$ be the family of set systems $C$ given in item (c) of Definition~\ref{optimal_hyper}. Theorem~\ref{Tint2} then tells us that, for $n \geq n_0$,
$$\KC(n,r,k,\ell)=\max \{\kappa(H_{C,r}(n),k,\ell)\colon C \in \mathcal{C}_{k,\ell}\}.$$

We actually use our work in the proof of Theorem~\ref{Tint2} to prove a stronger result, namely that, for $k \neq 4$, the set of hypergraphs $\mathcal{H}_{r,k,\ell}(n)$ is precisely the set of all hypergraphs that are asymptotically close to being extremal.
\begin{theorem}\label{Tint2_cont}
Let $k \neq 4$, $r$ and $\ell$ be fixed. For every $\eps>0$, there is $n_0>0$ such that,
for any $n>n_0$ and $H \in \mathcal{H}_{r,k,\ell}(n)$,
\begin{equation}\label{eqnova}
\displaystyle{\kappa(H,k,\ell) \geq (1-\eps)\KC(n,r,k,\ell)}.
\end{equation}
Conversely, there exist $n_1>0$ and $\eps_0>0$
such that, if $n>n_1$ and $\kappa(H,k,\ell) \geq (1-\eps_0)\KC(n,r,k,\ell)$,
then $H \in \mathcal{H}_{r,k,\ell}(n)$.
\end{theorem}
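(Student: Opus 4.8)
The plan is to deduce Theorem~\ref{Tint2_cont} from Theorem~\ref{Tint1} together with the enumerative core of the proof of Theorem~\ref{Tint2}. Write $N=N(n)=\binom{n-\ell}{r-\ell}$, so that $N=\ex(n,\mathcal{B}_{r,\ell})$ for $n$ large, the unique extremal hypergraph being a single $\ell$-star. Let $m=m(k)$ be the maximum of $p_1\cdots p_{c(k)}$ over partitions of $k$ into exactly $c(k)=\lceil k/3\rceil$ positive parts; this equals the largest product over partitions of $k$ into any number of parts, is attained by a unique such partition $\pi^{\star}$ (with parts in $\{2,3\}$; for $k\neq4$ one part is $3$, whereas $\pi^{\star}=(2,2)$ when $k=4$), and we let $T=T(k)$ be the number of ways to split the colour set $[k]$ into $c(k)$ labelled blocks of sizes forming $\pi^{\star}$. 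The input we take from the proofs of Theorems~\ref{Tint1} and~\ref{Tint2} is the following. For $H=H_{C,r}(n)$ with $C=\{t_1,\dots,t_{c(k)}\}$ and stars $S_i=\{e:t_i\subseteq e\}$, the dominant Kneser colourings of $H$ arise by splitting $[k]$ into blocks $I_1,\dots,I_{c(k)}$ (``$I_i$ homed at $S_i$'') and colouring each edge $e$ arbitrarily with a colour of $\bigcup_{i:\,e\in S_i}I_i$; every such colouring is Kneser because each colour class lies inside a single star; and counting these while showing all remaining Kneser colourings to be negligible gives
\[
\kappa(H,k,\ell)=(1+o(1))\,m(k)^{N}\,W_H ,
\qquad
W_H=\sum\ \prod_{1\le i<j\le c(k)}\left(\frac{p_i+p_j}{p_i\,p_j}\right)^{|S_i\cap S_j|},
\]
the sum being over all assignments of $[k]$ to $S_1,\dots,S_{c(k)}$ whose block sizes $(p_1,\dots,p_{c(k)})$ have product $m(k)$; the $o(1)$, which absorbs the sub-optimal profiles and every colouring having a non-star colour class or a colour shared by two stars along a pair of $\ell$-intersecting edges, is \emph{uniform} over the bounded set of isomorphism types of $C$. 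In particular $W_H\le T(k)$, with equality iff the $c(k)$ stars are pairwise edge-disjoint (i.e.\ $|t_i\cup t_j|>r$ for all $i<j$); and, crucially, for $k\neq4$ the value $W_H$ is maximised, among all $c(k)$-star configurations, precisely by those $C$ with $H_{C,r}(n)\in\mathcal{H}_{r,k,\ell}(n)$, and it exceeds $W_{H'}$ by a factor bounded away from $1$ for every other such $H'$.

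\emph{Proof of \eqref{eqnova}.} If $k\in\{2,3\}$, or if $k\ge5$ and $r\ge2\ell-1$, then $\mathcal{H}_{r,k,\ell}(n)=\{H_{n,r,k,\ell}\}$ up to isomorphism, and \eqref{eqnova} is immediate from Theorem~\ref{Tint1} once $n$ is large enough that its $o(1)$ is at most $\eps$. If $k\ge5$ and $r<2\ell-1$, then every $C\in\mathcal{C}_{k,\ell}$ satisfies $|t_i\cup t_j|>r$, so $S_i\cap S_j=\emptyset$ for all $i<j$ and hence $W_{H_{C,r}(n)}=T(k)$; moreover $H_{n,r,k,\ell}$ itself lies in this family, as disjoint $\ell$-sets satisfy $|t_i\cup t_j|=2\ell>r$. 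Consequently $\kappa(H_{C,r}(n),k,\ell)=(1+o(1))\,T(k)\,m(k)^N=(1+o(1))\,\kappa(H_{n,r,k,\ell},k,\ell)$ for every such $C$, and combining with Theorem~\ref{Tint1} yields $\kappa(H_{C,r}(n),k,\ell)=(1+o(1))\KC(n,r,k,\ell)\ge(1-\eps)\KC(n,r,k,\ell)$ for $n$ large.

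\emph{The converse.} Suppose $\kappa(H,k,\ell)\ge(1-\eps_0)\KC(n,r,k,\ell)$ for a small fixed $\eps_0>0$. By Theorem~\ref{Tint1} and the first paragraph, $\KC(n,r,k,\ell)=(1+o(1))\,W^{\star}\,m(k)^N$, where $W^{\star}$ is the maximum of $W_H$ over $c(k)$-star configurations (the common value attained on $\mathcal{H}_{r,k,\ell}(n)$), so the hypothesis reads $\kappa(H,k,\ell)\ge(1-o(1))(1-\eps_0)\,W^{\star}\,m(k)^N$. This is precisely the regime under which the proof of Theorem~\ref{Tint2} operates --- that proof nowhere uses exact extremality --- and we re-run it. The general upper bound on $\kappa$, obtained by encoding each Kneser colouring by a bounded ``template'' recording the approximate centres of the large colour classes (each essentially an $\ell$-star, by an Erd\H{o}s--Ko--Rado / Hilton--Milner stability argument applied to the $\ell$-intersecting classes), the distribution of the $k$ colours among these centres, and the $o(N)$ exceptional edges, gives $\kappa(H,k,\ell)\le n^{O(1)}\cdot\max(\text{template weight})$, a template weight being the product over all edges $e$ of the number of colours the template permits on $e$. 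For this maximum to reach $(1-o(1))\,W^{\star}\,m(k)^N$ the best template must use a colour profile of product $m(k)$ --- forcing exactly $c(k)$ relevant centres --- and must waste essentially no edges --- forcing $E(H)$ to agree with the union of the $c(k)$ $\ell$-stars on those centres up to $o(N)$ edges; a standard removal/stability step upgrades this to exact agreement, so $H=H_{C,r}(n)$ for a $c(k)$-element $C$. By the displayed formula the associated constant is then $W_H$, which equals $W^{\star}$ (with $k\neq4$) only when $H_{C,r}(n)\in\mathcal{H}_{r,k,\ell}(n)$; so, choosing $\eps_0$ below the factor by which $W^{\star}$ exceeds the next largest value of $W_H$, and $n_1$ accordingly, we conclude $H\in\mathcal{H}_{r,k,\ell}(n)$ (and $H\cong H_{n,r,k,\ell}$ when $r\ge2\ell-1$).

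\emph{The role of $k\neq4$, and the main obstacle.} The last step is exactly where $k\neq4$ is needed: then $\pi^{\star}$ has a part equal to $3$, so some pair $\{p_i,p_j\}$ occurring in an optimal profile is $\neq\{2,2\}$, whence $(p_i+p_j)/(p_i\,p_j)<1$ and any nonempty $S_i\cap S_j$ strictly lowers $W_H$; thus the candidates beat every other $c(k)$-star configuration by a constant factor and near-optimality already forces membership in $\mathcal{H}_{r,k,\ell}(n)$. For $k=4$, $\pi^{\star}=(2,2)$ makes $(p_i+p_j)/(p_i\,p_j)=1$, so $W_{H_{C,r}(n)}=6$ for \emph{every} pair $C=\{t_1,t_2\}$ of distinct $\ell$-sets, regardless of $|t_1\cap t_2|$; all of these therefore satisfy $\kappa(H,k,\ell)=(1+o(1))\KC(n,r,k,\ell)$, the second assertion of Theorem~\ref{Tint2_cont} genuinely fails, and $k=4$ must be excluded. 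The step I expect to be the main obstacle is making the asymptotic enumeration of the first paragraph rigorous and, above all, \emph{uniform}: one must show that the non-canonical Kneser colourings (colour classes that are not stars, colours spanning two stars along pairs of edges meeting in at least $\ell$ vertices) and the sub-optimal profiles contribute $o(m(k)^N)$, uniformly over the bounded family of configurations $C$ in play, and sharply enough to detect the constant-factor advantage the candidates enjoy over every competitor of the correct exponential order --- the very information that drives both directions of the theorem.
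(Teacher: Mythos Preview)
Your forward direction is correct and matches the paper: Theorem~\ref{Tint1} together with the observation that $\alpha(n,r,k,\ell)$ is the same for every $H\in\mathcal{H}_{r,k,\ell}(n)$ (Lemma~\ref{Lql}) gives~\eqref{eqnova} at once.

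The converse has the right three-stage shape (force the cover size, then completeness, then the cover structure), but two of the stages are not properly justified. First, the ``standard removal/stability step'' that upgrades $o(N)$-edge agreement with a $c(k)$-star hypergraph to exact equality is neither standard nor needed, and the paper does not argue this way. Its argument is elementary and sharp: once $|C|=c(k)$ (Remark~\ref{rem_cover}), if any $r$-set $e$ covered by $C$ is missing from $H$ then every star colouring of $H$ extends in at least two ways to $H\cup\{e\}$, so $|\SC(H,k,\ell)|\le\tfrac12|\SC(H\cup\{e\},k,\ell)|$; since non-star colourings are negligible (Lemma~\ref{L5}) this already yields $\kappa(H,k,\ell)\le(\tfrac12+o(1))\KC(n,r,k,\ell)$. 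No approximate structure is ever established. Second, for $k\equiv1\pmod3$ your template argument does not force $c(k)$ centres: the maximum product $m(k)$ is also attained by a profile with $c(k)-1$ parts (one $4$ and the rest $3$'s), so $|C|=c(k)-1$ survives the profile step and must be ruled out by a direct comparison of star-colouring counts, as the paper does in~\eqref{mona5} and~\eqref{mona6}--\eqref{mona7}.

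There is also a real inaccuracy in your $W_H$ device. When $r\ge2\ell$, any two $\ell$-stars---even those based on disjoint $\ell$-sets---share $\binom{n-2\ell}{r-2\ell}$ edges, so $W_H\to0$ for \emph{every} configuration and the statement ``$W_H\le T(k)$ with equality iff the stars are edge-disjoint'' is vacuous in this regime. What is actually needed, and what the paper establishes as Remark~\ref{mona3} via the swap argument of Lemma~\ref{L4}, is that the \emph{ratio} of star-colouring counts between the optimal cover and any other $c(k)$-element cover is bounded away from~$1$; your formalism can encode this, but only if you compare the $n$-dependent quantities directly rather than treating $W_H$ as a fixed weight. (The displayed formula for $W_H$ also drops the contribution of edges lying in three or more stars, which occur once $r\ge3\ell$.)
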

On the other hand, for $k=4$ and $\ell>1$, the situation is different. Recall that $\mathcal{H}_{r,4,\ell}(n)$ contains a single hypergraph up to isomorphism. However, we shall see that the set of hypergraphs $H$ that satisfy \eqref{eqnova} is larger. To the best of our knowledge, proving the existence of a unique extremal configuration for a problem with a large family of distinct asymptotically extremal configurations is rather uncommon. This is addressed in Theorem~\ref{theo1_inter}.  

The following is the analogue of Theorem~\ref{Tint2} for $k=4$.
\begin{definition}\label{defH4}
Fix integers $n,r \geq 2$ and $1 \leq \ell <r$. The family $\mathcal{H}^{\ast}_{r,4,\ell}(n)$ consists of all $n$-vertex $r$-uniform hypergraphs $H$ such that $H=H_{C,r}$ for $C=\{t_1,t_2\}$, where the distinct sets $t_i$ are $\ell$-subsets of the vertex set.
\end{definition}

\begin{theorem}\label{Tint2_cont4}
Let $r$ and $\ell$ be fixed. For every $\eps>0$, there is $n_0>0$ such that,
for any $n>n_0$ and $H \in \mathcal{H}^{\ast}_{r,4,\ell}(n)$,
$$\displaystyle{\kappa(H,4,\ell) \geq (1-\eps)\KC(n,r,4,\ell)}.$$
Conversely, there exist $n_1>0$ and $\eps_0>0$
such that, if $n>n_1$ and $\kappa(H,4,\ell) \geq (1-\eps_0)\KC(n,r,4,\ell)$,
then $H \in \mathcal{H}^{\ast}_{r,4,\ell}(n)$.
\end{theorem}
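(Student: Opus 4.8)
The plan is to establish the two implications separately, both resting on a single precise estimate that parallels the counting carried out in the proof of Theorem~\ref{Tint2}: for every pair $C=\{t_1,t_2\}$ of \emph{distinct} $\ell$-subsets of $[n]$,
\[
\kappa(H_{C,r}(n),4,\ell)=(1+o(1))\cdot 6\cdot 4^{\binom{n-\ell}{r-\ell}},
\]
where the $o(1)$ depends only on $n$, $r$, $\ell$ (not on the particular pair $C$). Granting this, the first assertion is immediate: taking $C$ to be a pair of disjoint $\ell$-sets shows $\kappa(H_{n,r,4,\ell},4,\ell)=(1+o(1))\cdot 6\cdot 4^{\binom{n-\ell}{r-\ell}}$, so by Theorem~\ref{Tint1} the function $\KC(n,r,4,\ell)$ has the same asymptotics, and hence $\kappa(H,4,\ell)=(1+o(1))\KC(n,r,4,\ell)\ge(1-\eps)\KC(n,r,4,\ell)$ for every $H\in\mathcal{H}^{\ast}_{r,4,\ell}(n)$ and $n$ large.

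For the estimate, write $E_i$ for the $\ell$-star at $t_i$, $m=|E_i|=\binom{n-\ell}{r-\ell}$, and $d=|E_1\cap E_2|$; since $t_1\neq t_2$ the overlap edges all contain $t_1\cup t_2$ (at least $\ell+1$ vertices), so $d=O(n^{r-\ell-1})$, and each overlap edge $\ell$-intersects every edge of $E_1\cup E_2$. The lower bound comes from the \emph{clean} colourings: fix one of the $\binom{4}{2}=6$ partitions of $\{1,2,3,4\}$ into two pairs, colour $E_1\setminus E_2$ with the first pair, $E_2\setminus E_1$ with the second, and colour $E_1\cap E_2$ arbitrarily; any two edges through $t_i$ $\ell$-intersect, so this is a valid $(4,\ell)$-Kneser colouring, and inclusion--exclusion gives $(1+o(1))\cdot 6\cdot 2^{|E_1\setminus E_2|}2^{|E_2\setminus E_1|}4^{d}=(1+o(1))\cdot6\cdot4^{m}$ of them --- the overlap deficit $2^{-2d}$ being exactly cancelled by the factor $4^{d}$. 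For the matching upper bound, the key observation is that two generic edges $e_1\supseteq t_1$ and $e_2\supseteq t_2$ meet in exactly $|t_1\cap t_2|<\ell$ vertices, so a colour appearing on both $E_1\setminus E_2$ and $E_2\setminus E_1$ is, on one of the two sides, confined to the $O(n^{r-\ell-1})$ edges $\ell$-intersecting a fixed edge of the other side. A finite case analysis over colour patterns then shows that every colouring that is not clean, as well as every clean colouring in which some colour is used on $o(m)$ edges of its side, makes one side almost monochromatic and so essentially determined; there are at most $2^{(1+o(1))m}=o(4^{m})$ of these, whence the upper bound.

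For the converse, assume $\kappa(H,4,\ell)\ge(1-\eps_0)\KC(n,r,4,\ell)=(1-\eps_0+o(1))\cdot6\cdot4^{m}$. By the stability argument underlying Theorems~\ref{Tint1} and~\ref{Tint2}, $H$ is then sandwiched between two $\ell$-stars: there are $\ell$-sets $t_1,t_2$ with $|E(H)\setminus(E_1\cup E_2)|=o(m)$ and $|E(H)\cap(E_1\cup E_2)|\ge(2-o(1))m$. One then bootstraps to exact equality. If $H$ omits an edge $e^{\ast}$ of $E_1\cup E_2$, then in every dominant clean colouring $e^{\ast}$ would have had at least two legal colours (the pair assigned to whichever star contains $e^{\ast}$ is used only within that star), so $\kappa(H)\le(\tfrac12+o(1))\cdot6\cdot4^{m}$, contradicting near-extremality once $\eps_0<\tfrac12$. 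If $H$ contains an edge $e^{\ast}\notin E_1\cup E_2$, then $|e^{\ast}\cap t_i|<\ell$ for $i=1,2$, so $e^{\ast}$ $\ell$-intersects only $O(n^{r-\ell-1})$ edges of each of $E_1\setminus E_2$ and $E_2\setminus E_1$; since in a dominant clean colouring each of the four colours is used on $\Omega(m)$ edges of one of these two sets, $e^{\ast}$ cannot be coloured, and --- as such colourings account for all but $o(4^{m})$ of the total by the estimate above --- we get $\kappa(H)=o(4^{m})$, again a contradiction. Hence $E(H)=E_1\cup E_2$, and $t_1\neq t_2$ (a single $\ell$-star admits only $4^{m}=(\tfrac16+o(1))\KC(n,r,4,\ell)$ Kneser colourings), so $H=H_{\{t_1,t_2\},r}(n)\in\mathcal{H}^{\ast}_{r,4,\ell}(n)$.

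The main obstacle is the upper bound in the estimate: a naive union bound over which edges receive the minority colours fails, since there are $m^{\Theta(n^{r-\ell-1})}$ such choices --- far more than $4^{m}$ --- and one really must exploit that a minority colour forces the opposite side to be essentially monochromatic, collapsing the relevant count to $2^{O(n^{r-\ell-1})}$. Getting the leading constant to be exactly $6$, rather than merely $O(1)$, is what turns $(1-\Omega(1))$ into $(1-\eps)$ in the first part and, via the stability argument, is exactly what makes the converse land in the whole family $\mathcal{H}^{\ast}_{r,4,\ell}(n)$; this is where the case $k=4$ genuinely departs from the behaviour described in Theorem~\ref{Tint2_cont}. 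Finally, when $\ell=1$ the sets $t_1\neq t_2$ are automatically disjoint, so $\mathcal{H}^{\ast}_{r,4,1}(n)$ and $\mathcal{H}_{r,4,1}(n)$ coincide and the argument above goes through verbatim.
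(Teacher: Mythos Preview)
Your treatment of the first assertion is correct and essentially matches the paper's: both establish $\kappa(H_{C,r}(n),4,\ell)=(1+o(1))\cdot 6\cdot 4^{\binom{n-\ell}{r-\ell}}$ uniformly over pairs $C=\{t_1,t_2\}$ of distinct $\ell$-sets --- the paper via its star-colouring machinery (Lemmas~\ref{L5} and~\ref{L6}) and the explicit identity~(\ref{klm2}), you via the direct ``clean colouring'' count --- and then combine this with Theorem~\ref{Tint1}.

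For the converse your route diverges from the paper's, and the divergence both creates extra work and leaves a gap. You invoke an unspecified ``stability argument underlying Theorems~\ref{Tint1} and~\ref{Tint2}'' to produce $\ell$-sets $t_1,t_2$ with $E(H)$ only \emph{approximately} equal to $E_1\cup E_2$, and are then forced into a separate argument that stray edges $e^{\ast}\notin E_1\cup E_2$ kill most colourings. Neither theorem you cite actually supplies such an approximate-structure statement. What the paper's machinery in Section~\ref{sec_upper} does give (see the discussion around Lemma~\ref{L3} and Remark~\ref{rem_cover}, adapted to $k\equiv 1\pmod 3$ as in the proof of Theorem~\ref{T2_4}) is that any near-extremal $H$ has minimum $\ell$-cover of size \emph{exactly} two: size $\ge 3$ forces an exponential deficit, and size one gives only $\kappa\le 4^{m}\sim\tfrac16\KC$. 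Taking $\{t_1,t_2\}$ to be this minimum cover yields $E(H)\subseteq E_1\cup E_2$ \emph{exactly}, so your extra-edge case is vacuous. For the remaining missing-edge step the paper argues $\s(H)\le\tfrac12\,\s(H\cup\{e\})$ and then invokes Lemma~\ref{L5}, which guarantees $\kappa(H)\le \s(H)+D(k)^{(1-\gamma)\binom{n-N}{r-\ell}}$ for \emph{any} $H$ with a cover of size $c(k)$ --- this is exactly the ``clean colourings dominate'' fact that your bootstrap uses but does not justify for a general near-extremal $H$ (your upper bound in the main estimate is stated only for $H_{C,r}$).

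So the ideas in your converse are right, but the paper's minimum-cover approach is both tighter (it identifies $t_1,t_2$ exactly and eliminates the extra-edge case for free) and supplies the missing uniform domination statement via Lemma~\ref{L5}.
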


Theorems~\ref{Tint2_cont} and \ref{Tint2_cont4} may be naturally interpreted in terms of `stability' as in Simonovits's Stability Theorem~\cite{simonovits} for graphs. Roughly speaking, the problem of maximizing a function $f$ over a class of combinatorial objects $\mathcal{C}$ is said to be \emph{stable} if every object that is very close to maximizing $f$ is almost equal to the object that maximizes $f$. In our framework, this idea can be formalized as follows. Here, for two sets $A$ and $B$, we write $A \bigtriangleup B$ for their symmetric difference $(A\setminus B) \cup (B \setminus A)$.
\begin{definition}\label{def_stable}
Let $r$, $k$ and $\ell$ be fixed. The problem $\mathcal{P}_{n,r,k,\ell}$ of determining $\KC(n,r,k,\ell)$ is \emph{stable} if, for every $\eps>0$, there exist $\delta>0$ and $n_0>0$ such that the following is satisfied. Let $H^{\ast}$ be an $r$-uniform extremal hypergraph for $\KC(n,r,k,\ell)$, where $n>n_0$, and let $H$ be an $r$-uniform hypergraph on $[n]$ satisfying $\kappa(H,k,\ell) > (1-\delta) \KC(n,r,k,\ell)$. Then 
 $|E(H) \bigtriangleup E(H')|< \eps |E(H')|$ for some hypergraph $H'$ isomorphic to $H^{\ast}$.  
\end{definition}

Combining Theorems~\ref{Tint2_cont} and \ref{Tint2_cont4} with our work for $k \in \{2,3\}$, we may deduce exactly when $\mathcal{P}_{n,r,k,\ell}$ is stable.
\begin{theorem}\label{Tint3}
Let $k \geq 2$, $r$, and $\ell$ be positive integers with $\ell < r$.
\begin{itemize}
\item[(i)] If $k \in \{2,3\}$, then $\mathcal{P}_{n,r,k,\ell}$ is stable.

\item[(ii)] If $k=4$, then $\mathcal{P}_{n,r,k,\ell}$ is stable if and only if $\ell=1$.

\item[(iii)] If $k \geq 5$, then $\mathcal{P}_{n,r,k,\ell}$ is stable if and only if $r \geq 2 \ell-1$.
\end{itemize}
\end{theorem}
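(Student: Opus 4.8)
The plan is to deduce all three parts from Theorems~\ref{Tint2}, \ref{Tint2_cont} and \ref{Tint2_cont4}, combined with two opposing principles and a couple of elementary estimates on the number of $r$-subsets of $[n]$ containing a fixed small set. The first is a \emph{rigidity principle}: if there exist $\eps_0>0$ and $n_0$ such that, for all $n>n_0$, every $r$-uniform hypergraph $H$ on $[n]$ with $\kappa(H,k,\ell)\ge(1-\eps_0)\KC(n,r,k,\ell)$ is isomorphic to one fixed hypergraph, then that hypergraph must be the extremal one $H^{\ast}$, and $\mathcal P_{n,r,k,\ell}$ is stable --- given $\eps>0$ one takes $\delta=\eps_0$, and any $H$ with $\kappa(H,k,\ell)>(1-\delta)\KC(n,r,k,\ell)$ is itself an isomorphic copy of $H^{\ast}$, so $|E(H)\bigtriangleup E(H')|=0<\eps|E(H')|$ with $H'=H$. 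The second is a \emph{separation principle}: if some hypergraph $H$ not isomorphic to $H^{\ast}$ is almost extremal (that is, $\kappa(H,k,\ell)\ge(1-\eps)\KC(n,r,k,\ell)$ for every fixed $\eps>0$ once $n$ is large) yet satisfies $|E(H)\bigtriangleup E(H')|\ge\eps_1|E(H')|$ for every $H'\cong H^{\ast}$ with $\eps_1>0$ absolute, then $\mathcal P_{n,r,k,\ell}$ is not stable. Throughout, the one arithmetic fact needed is that, for fixed $r>\ell$ and $s>\ell$, we have $\binom{n-s}{r-s}=o\bigl(\binom{n-\ell}{r-\ell}\bigr)$ as $n\to\infty$.

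For the stable cases I would argue as follows. If $k\in\{2,3\}$, or if $k\ge5$ and $r\ge2\ell-1$, then by Definition~\ref{optimal_hyper}(a) the family $\mathcal H_{r,k,\ell}(n)$ is a single hypergraph up to isomorphism, namely $H_{n,r,k,\ell}$; since $k\ne4$, the converse part of Theorem~\ref{Tint2_cont} supplies the $\eps_0>0$ demanded by the rigidity principle, and stability follows. This settles (i) and the ``$\Leftarrow$'' direction of (iii). If $k=4$ and $\ell=1$, then any two distinct $\ell$-subsets of $[n]$ are disjoint, so $\mathcal H^{\ast}_{r,4,1}(n)$ coincides with $\mathcal H_{r,4,1}(n)$ and is again a single isomorphism class; the converse part of Theorem~\ref{Tint2_cont4} supplies $\eps_0$, and the rigidity principle gives stability, which is the ``if'' direction of (ii).

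For the unstable cases I would use Theorem~\ref{Tint2} to locate $H^{\ast}$ and the forward parts of Theorems~\ref{Tint2_cont4} and \ref{Tint2_cont} to produce the almost-extremal witnesses. When $k=4$ and $\ell\ge2$, Theorem~\ref{Tint2} and Definition~\ref{optimal_hyper}(b) give $H^{\ast}\cong H_{\{s_1,s_2\},r}(n)$ with $|s_1\cap s_2|=\ell-1$, while I would take $H=H_{\{t_1,t_2\},r}(n)$ with $t_1\cap t_2=\emptyset$; since $H\in\mathcal H^{\ast}_{r,4,\ell}(n)$, Theorem~\ref{Tint2_cont4} gives $\kappa(H,4,\ell)\ge(1-\delta)\KC(n,r,4,\ell)$ for $n$ large. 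One checks that $|E(H)|$ and $|E(H')|$ are both $(2+o(1))\binom{n-\ell}{r-\ell}$, whereas for \emph{every} $H'\cong H^{\ast}$ one has $|E(H)\bigtriangleup E(H')|\ge|E(H')\setminus E(H)|\ge(1+o(1))\binom{n-\ell}{r-\ell}$: an $r$-set belonging to the star of two distinct $\ell$-sets must contain their union, of size at least $\ell+1$ (and at least $2\ell-1$ if the $\ell$-sets are disjoint), so at least one of the two stars comprising $E(H')$ contributes $(1-o(1))\binom{n-\ell}{r-\ell}$ edges missed by $E(H)$. Hence $|E(H)\bigtriangleup E(H')|\ge\tfrac13|E(H')|$ for $n$ large, and $\eps_1=\tfrac13$ witnesses non-stability, settling the ``only if'' direction of (ii). When $k\ge5$ and $r<2\ell-1$ we have $c(k)=\lceil k/3\rceil\ge2$ and both $2\ell>r$ and $2\ell-1>r$; I would take a set $C_a$ of $c(k)$ pairwise disjoint $\ell$-subsets of $[n]$, and a set $C_b$ of $c(k)$ $\ell$-subsets of $[n]$ in which exactly one pair meets in a single vertex while all other pairs are disjoint. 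Both families satisfy the condition ``$|t_i\cup t_j|>r$ for all $i<j$'' of Definition~\ref{optimal_hyper}(c), so $H_a:=H_{C_a,r}(n)$ and $H_b:=H_{C_b,r}(n)$ lie in $\mathcal H_{r,k,\ell}(n)$, are non-isomorphic, and have all their $c(k)$ stars pairwise disjoint, whence $|E(H_a)|=|E(H_b)|=c(k)\binom{n-\ell}{r-\ell}$. By Theorem~\ref{Tint2}, $H^{\ast}\cong H_{C^{\ast},r}(n)$ for some such family $C^{\ast}$; let $H=H_{C,r}(n)$ be whichever of $H_a,H_b$ is not isomorphic to $H^{\ast}$. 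For $n$ large the assignment $C\mapsto H_{C,r}(n)$ is injective on $c(k)$-element families with pairwise unions exceeding $r$ (an $\ell$-set $t\notin C$ extends to an $r$-set avoiding a transversal of $\{t_i\setminus t:t_i\in C\}$, hence to a non-edge), so every $H'\cong H^{\ast}$ equals $H_{C',r}(n)$ for a family $C'\ne C$; the union-size argument then gives $|E(H)\bigtriangleup E(H')|=\bigl(2(c(k)-|C\cap C'|)+o(1)\bigr)\binom{n-\ell}{r-\ell}\ge(2+o(1))\binom{n-\ell}{r-\ell}$. Since $|E(H')|=c(k)\binom{n-\ell}{r-\ell}$ and $\kappa(H,k,\ell)\ge(1-\delta)\KC(n,r,k,\ell)$ by the forward part of Theorem~\ref{Tint2_cont}, $\eps_1=1/c(k)$ witnesses non-stability, settling the ``only if'' direction of (iii).

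I expect the only genuinely delicate point to be the bookkeeping in this last case: since $H^{\ast}$ is known only to lie in $\mathcal H_{r,k,\ell}(n)$, a family with more than one isomorphism type, one cannot name the extremal hypergraph, and the trick of producing \emph{two} non-isomorphic almost-extremal candidates --- at most one of which is $H^{\ast}$ --- forces us to bound $|E(H)\bigtriangleup E(H')|$ uniformly over all isomorphic copies $H'$ of an unspecified $H^{\ast}$. That is exactly why the injectivity of $C\mapsto H_{C,r}(n)$ and the clean identity $|E(H_{C,r}(n))|=c(k)\binom{n-\ell}{r-\ell}$ (valid precisely because the ``pairwise unions exceed $r$'' condition makes the stars disjoint) are needed. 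Everything else --- the rigidity principle and the remaining counting --- is routine, the substantive analytic work having already been carried out in Theorems~\ref{Tint2_cont} and \ref{Tint2_cont4}.
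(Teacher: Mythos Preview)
Your proposal is correct and follows the same overall strategy as the paper: stability in the cases listed is read off from the converse parts of Theorems~\ref{Tint2_cont} and~\ref{Tint2_cont4} (the almost-extremal hypergraphs form a single isomorphism class, so your ``rigidity principle'' applies), and instability is witnessed by two asymptotically extremal hypergraphs that are far apart in symmetric difference.

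The one place your argument genuinely deviates is the instability proof for $k\ge5$ with $r<2\ell-1$. The paper compares $H_{n,r,k,\ell}$ (pairwise disjoint cover) with a hypergraph $H'_{n,r,k,\ell}$ whose $c(k)$ cover sets all share a common $(2\ell{-}r{-}1)$-set, and argues via a structural invariant: every two hyperedges of $H'_{n,r,k,\ell}$ intersect, whereas $H_{n,r,k,\ell}$ has a positive fraction of disjoint hyperedge pairs, so the two cannot be made close by editing few edges. You instead take two non-isomorphic members $H_a,H_b$ of $\mathcal H_{r,k,\ell}(n)$ (all covers disjoint versus exactly one pair meeting in a vertex), note that at most one can be isomorphic to the unknown extremal $H^\ast$, and bound $|E(H)\bigtriangleup E(H')|$ for every copy $H'\cong H^\ast$ directly via uniqueness of the minimum $\ell$-cover and the estimate $\binom{n-s}{r-s}=o\bigl(\binom{n-\ell}{r-\ell}\bigr)$ for $s>\ell$. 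Your route is slightly cleaner: it makes the logic of the separation principle explicit and never needs to name $H^\ast$, whereas the paper's argument as written only bounds the distance between its two specific witnesses and is informal about why that suffices for Definition~\ref{def_stable}. The paper's choice of $H'_{n,r,k,\ell}$, on the other hand, has the virtue of yielding an isomorphism-invariant obstruction (no disjoint hyperedge pairs) that separates it from \emph{any} copy of $H_{n,r,k,\ell}$ in one stroke. Both arguments rest entirely on Theorems~\ref{Tint2_cont} and~\ref{Tint2_cont4} for the analytic input.
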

This instability result suggests that, when $\ell>1$, the precise determination of the extremal hypergraphs for $k=4$ and for $k \geq 5$ with $r < 2 \ell-1$ requires a very careful counting of the number of Kneser colorings of each of the candidate extremal hypergraphs. As it turns out, we were able to carry out the calculations for the case $k=4$ and $\ell>1$ (see~Theorem~\ref{theo1_inter}). 

The remainder of the paper is organized as follows. In Section \ref{sec_prem} we provide basic definitions and results, and we address the case $k=2$. Section \ref{sec_upper} is concerned with basic structural aspects of Kneser colorings, which, for $n$ sufficiently large, lead to the determination of $\KC(n,r,3,\ell)$ and of auxiliary upper bounds on $\KC(n,r,k,\ell)$ when $k \geq 4$. Additional properties of extremal hypergraphs are obtained in Section \ref{sec_extremal}, which are then used in Section \ref{sec_asy} to find an asymptotic formula for $\KC(n,r,k,\ell)$ when $k \geq 4$. Concluding remarks follow in Section \ref{sec_final}.

\section{Preliminaries}\label{sec_prem}

In this section, we consider Kneser colorings with two colors. Moreover, we introduce an optimization problem that plays an important role in the study of Kneser colorings with more colors. We start by formally stating our concepts and terminology.
\begin{definition}
An \emph{$r$-subset} ($r$-set) of a set $X$ is an $r$-element subset of $X$. For a positive integer $\ell$, we say that a family $F$ of sets is \emph{$\ell$-intersecting} if the intersection of any two sets in $F$ contains at least $\ell$ elements.
\end{definition}

\begin{definition}
A \emph{$(k,\ell)$-Kneser coloring} of a hypergraph $H=(V,E)$ is a
function $\Delta \colon E \longrightarrow [k]$ associating
a \emph{color} with each hyperedge with the property that any two hyperedges
with the same color are $\ell$-intersecting. A hypergraph admitting a $(k,\ell)$-Kneser coloring is called \emph{$(k,\ell)$-Kneser colorable} ($(k,\ell)$-colorable, for short), and the number of $(k,\ell)$-Kneser colorings of a hypergraph $H$ is denoted by $\kappa(H,k,\ell)$. Given positive integers $n$, $r$, $k$ and $\ell$, we define
$$\KC(n,r,k,\ell)=\max\{\kappa(H,k,\ell):H \textrm{ is an
$r$-uniform hypergraph on $n$ vertices}\};$$
that is, $\KC(n,r,k,\ell)$ is the maximum number of $(k,\ell)$-Kneser colorings on an $r$-uniform hypergraph on $n$ vertices.
\end{definition}

Recall from the introduction that $F_{r,i}$ is the $r$-uniform hypergraph on
$2r-i$ vertices with two hyperedges sharing exactly $i$ vertices and
$\mathcal{B}_{r,\ell}=\{F_{r,i}:~i=0,\ldots,\ell-1\}$. Moreover, the Tur\'{a}n number $\ex(n,\mathcal{B}_{r,\ell})$ is the largest number of hyperedges in a $\mathcal{B}_{r,\ell}$-free $r$-uniform hypergraph on $[n]$. The following result was proved by Ahlswede and Khachatrian \cite{AK}, generalizing the Erd\H{o}s--Ko--Rado Theorem~\cite{EKR}. It settles the problem of determining $\ex(n,\mathcal{B}_{r,\ell})$ and the associated extremal hypergraphs. Note that $\ex(n,\mathcal{B}_{r,\ell})=
\binom{n}{r}$ for $n \leq 2r - \ell$, as any two $r$-subsets of $[n]$ are $\ell$-intersecting. In what follows, $[n]^r$ denotes the set of all $r$-subsets of $[n]$.

\begin{theorem}[Ahlswede and Khachatrian~\cite{AK}] \label{otto1}
Let $n \geq r \geq \ell$ be positive integers with $(r-\ell+1)\left(2+\frac{\ell-1}{s+1}\right)< n < (r-\ell+1)\left(2+\frac{\ell-1}{s}\right)$ for some
non-negative integer $s \leq r-\ell$. Then
$$\ex(n,\mathcal{B}_{r,\ell})=|\mathcal{F}_s|=\left|\left\{F \in [n]^r\, :~|F \cap [1,\ell+2s]|\geq \ell+s\right\}\right|,$$
and $\mathcal{F}_s$ is, up to permutations, the unique optimum. (By convention, $\frac{a}{0}=\infty$.) If $n=(r-\ell+1)\left(2+\frac{\ell-1}{s+1}\right)$ for some non-negative integer $s < r - \ell$, we have $\ex(n,\mathcal{B}_{r,\ell})=|\mathcal{F}_s|=|\mathcal{F}_{s+1}|$ and, unless $\ell = 1$ and $n=2r$, any optimal system equals, up to permutations, either $\mathcal{F}_s$ or $\mathcal{F}_{s+1}$. If $s = r-\ell$, then $\ex( \mathcal{B}_{r, \ell}) =|{\mathcal F}_{r - \ell}|$ and any optimal system is equal to $\mathcal{F}_{r-\ell}$ up to permutations.
If $\ell=1$ and $n=2r$, an optimal system ${\mathcal F}$ may be built in such a way that,
 for every $r$-subset $A$ in $[n]$, either $A$ or its complement lies in
${\mathcal F}$.
\end{theorem}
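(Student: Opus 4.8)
The plan is to follow Ahlswede and Khachatrian's proof of the Complete Intersection Theorem \cite{AK}, of which the statement above is a reformulation (the case $\ell=1$, $n\geq 2r$ being the classical Erd\H{o}s--Ko--Rado Theorem \cite{EKR}). First I would dispose of the range $n\leq 2r-\ell$: here any two $r$-subsets of $[n]$ meet in at least $2r-n\geq\ell$ vertices, so $[n]^r$ itself is $\ell$-intersecting and there is nothing to prove. Assume then that $n>2r-\ell$ and let $\mathcal{A}\subseteq[n]^r$ be an $\ell$-intersecting family of maximum size. The first reduction is to \emph{left-compressed} families: for $i<j$ let $S_{ij}$ be the shift operator that replaces each $F\in\mathcal{A}$ with $j\in F$, $i\notin F$ and $(F\setminus\{j\})\cup\{i\}\notin\mathcal{A}$ by $(F\setminus\{j\})\cup\{i\}$, leaving every other member of $\mathcal{A}$ fixed. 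A routine verification shows that $S_{ij}$ preserves both $|\mathcal{A}|$ and the property of being $\ell$-intersecting, and that iterating such operators terminates at a family invariant under every $S_{ij}$; so it suffices to bound $|\mathcal{A}|$ for left-compressed $\ell$-intersecting $\mathcal{A}$.

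The core of the argument is the structural analysis of left-compressed $\ell$-intersecting families, for which I would appeal to the \emph{pushing--pulling} (generating-set) method of \cite{AK}: one encodes such an $\mathcal{A}$ by a minimal ``kernel'' and shows, through a sequence of local exchange operations on kernels, that replacing the kernel by that of the standard family
\[
\mathcal{F}_s=\bigl\{F\in[n]^r:\ |F\cap[1,\ell+2s]|\geq\ell+s\bigr\}
\]
never decreases the size; hence some $\mathcal{F}_s$ with $0\leq s\leq r-\ell$ is optimal. What remains is then a one-parameter optimization: writing $|\mathcal{F}_s|=\sum_{j\geq\ell+s}\binom{\ell+2s}{j}\binom{n-\ell-2s}{r-j}$, one computes the consecutive differences $|\mathcal{F}_{s+1}|-|\mathcal{F}_s|$ and checks that, for fixed $r$ and $\ell$, this difference is positive exactly when $n<(r-\ell+1)\bigl(2+\tfrac{\ell-1}{s+1}\bigr)$, zero exactly at that threshold, and negative above it. Unimodality of $s\mapsto|\mathcal{F}_s|$ (a consequence of the single sign change) then pins down the maximizing index and yields precisely the stated ranges of $n$, with two consecutive families tying at a threshold.

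For the uniqueness statements one has to trace equality cases back through the compressions; since a single $S_{ij}$ may identify non-isomorphic families, the cleanest route (again as in \cite{AK}) is to show that an extremal $\mathcal{A}$ is already rigid under the relevant exchanges, which forces it, after relabelling $[n]$, to coincide with $\mathcal{F}_s$ (or with one of the two tied families at a threshold). The one genuine exception is $\ell=1$ and $n=2r$: then the $\ell$-intersecting condition on $[2r]^r$ merely forbids complementary pairs, so a maximum intersecting family is exactly a choice of one set from each of the $\tfrac12\binom{2r}{r}$ complementary pairs; this produces many pairwise non-isomorphic optima, the displayed description being one of them. I expect the structural step, namely the generating-set machinery that pins an optimal left-compressed family down to some $\mathcal{F}_s$, to be the main obstacle; by contrast the compression reduction is standard, and the difference-of-binomials optimization, though it must be handled with care, is elementary.
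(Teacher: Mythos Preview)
The paper does not prove this theorem at all: it is quoted as a known result of Ahlswede and Khachatrian~\cite{AK} and used as a black box (for instance in Lemma~\ref{auxlem1} and the proof of Theorem~\ref{otto2}). There is therefore no ``paper's own proof'' to compare your proposal against.

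Your outline is a faithful sketch of the original argument in~\cite{AK}: the left-compression reduction, the generating-set/pushing--pulling analysis that forces an optimal compressed family to be some $\mathcal{F}_s$, the one-variable optimization in $s$ via the sign of $|\mathcal{F}_{s+1}|-|\mathcal{F}_s|$, and the separate treatment of uniqueness and of the exceptional case $\ell=1$, $n=2r$. You correctly identify the structural step as the substantial one; the remainder is routine. If you were asked to supply a proof here, reproducing~\cite{AK} along these lines would be appropriate, but for the purposes of this paper a citation suffices.
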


The following property of the set systems $\mathcal{F}_s$ defined in the
statement of Theorem \ref{otto1} is particularly useful.
\begin{lemma}\label{auxlem1}
Let $r$ and $\ell$ be positive integers satisfying $\ell<r$. Consider a positive integer $n$, with the additional restriction $n>2r$ if $\ell=1$,
and a non-negative integer $s$ and the set system $\mathcal{F}_s$ corresponding to an extremal configuration for $\ex(n,\mathcal{B}_{r,\ell})$ defined in Theorem \ref{otto1}. If $e$ is an $r$-subset of $[n]$ that is not $\ell$-intersecting with an element of $\mathcal{F}_s$, then it is not $\ell$-intersecting with at least two elements of $\mathcal{F}_s$.
\end{lemma}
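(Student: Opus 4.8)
The plan is to argue directly. Fix an $r$-set $F_0\in\mathcal{F}_s$ with $|e\cap F_0|\le\ell-1$; it then suffices to produce a \emph{second} set $F_1\in\mathcal{F}_s$, with $F_1\neq F_0$ and $|e\cap F_1|\le\ell-1$. Throughout write $A=\{1,\dots,\ell+2s\}$, so that $\mathcal{F}_s=\{F\in[n]^r:|F\cap A|\ge\ell+s\}$. I would split the argument according to whether $s<r-\ell$ or $s=r-\ell$, after first recording two remarks that make both cases run smoothly. First, if $\ell=1$ then the only extremal family is the star $\mathcal{F}_0$, so $s=0<r-\ell$; hence $s=r-\ell$ forces $\ell\ge2$, and then $r\ge3$. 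Second, if $s<r-\ell$, so that $s+1\le r-\ell$, the range of $n$ on which $\mathcal{F}_s$ is extremal in Theorem~\ref{otto1} has left endpoint $(r-\ell+1)(2+\frac{\ell-1}{s+1})\ge(r-\ell+1)(2+\frac{\ell-1}{r-\ell})=2r-\ell+1+\frac{\ell-1}{r-\ell}$; for $\ell\ge2$ this strictly exceeds $2r-\ell+1$, and for $\ell=1$ the hypothesis $n>2r$ does the same, so in all cases $s<r-\ell$ implies $n\ge2r-\ell+2$.

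\emph{Case $s<r-\ell$ (equivalently $r\ge\ell+s+1$).} Here I would use a single-element swap: look for $x\in F_0$ and $y\in[n]\setminus F_0$ with $F_1:=(F_0\setminus\{x\})\cup\{y\}$ still in $\mathcal{F}_s$ and still satisfying $|e\cap F_1|\le\ell-1$; any such $F_1$ is automatically distinct from $F_0$ since $y\in F_1\setminus F_0$. To stay in $\mathcal{F}_s$ it is enough to take $x\in F_0\setminus A$ whenever $F_0\not\subseteq A$, and an arbitrary $x\in F_0$ otherwise (when $F_0\subseteq A$ we have $|F_0\cap A|=r\ge\ell+s+1$, so $|F_1\cap A|\ge r-1\ge\ell+s$ for every $y$). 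For the intersection with $e$: if $|e\cap F_0|\le\ell-2$, then any $y\in[n]\setminus F_0$ works, because $|e\cap F_1|\le|e\cap F_0|+1\le\ell-1$; and if $|e\cap F_0|=\ell-1$, then one takes $y\in[n]\setminus(F_0\cup e)$, which is nonempty because $|F_0\cup e|=2r-\ell+1\le n-1$ by the preliminary bound, so that $|e\cap F_1|\le|e\cap F_0|\le\ell-1$.

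\emph{Case $s=r-\ell$.} Now $A=\{1,\dots,2r-\ell\}$ and $\mathcal{F}_{r-\ell}$ is exactly the family of all $r$-subsets of $A$, with $|A|=2r-\ell\ge r+1$. Since $F_0\subseteq A$, the failure of the $\ell$-intersection property forces $e\not\subseteq A$; set $t:=|e\setminus A|\ge1$ and $e'=e\cap A$, so $|e'|=r-t$. For $F\subseteq A$ with $|F|=r$ we have $|e\cap F|=|e'\cap F|$, and the number of such $F$ with $|e'\cap F|=j$ equals $\binom{r-t}{j}\binom{r-\ell+t}{r-j}$, for $j$ in the feasible range $\max(0,\ell-t)\le j\le r-t$. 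So it suffices to name one $j$ in this range with $j\le\ell-1$ for which this count is at least $2$. Taking $j=\max(0,\ell-t)$: when $t\ge\ell+1$ the count is $\binom{r-\ell+t}{r}\ge\binom{r+1}{r}\ge2$; when $1\le t\le\ell-1$ the count is $\binom{r-t}{\ell-t}\ge r-t\ge r-\ell+1\ge2$ (using $0<\ell-t<r-t$); and in the single leftover subcase $t=\ell$, take $j=1$ instead, for a count of $(r-\ell)\cdot r\ge2$ (recall $r\ge3$ here). In every case $\mathcal{F}_{r-\ell}$ contains at least two sets not $\ell$-intersecting with $e$, which finishes the proof.

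The binomial estimates and the swap bookkeeping are entirely routine. The one step that requires genuine care is the interaction with Theorem~\ref{otto1}: checking that $s<r-\ell$ already forces $n\ge2r-\ell+2$ — this is precisely where the hypothesis ``$n>2r$ if $\ell=1$'' is used — and that $s=r-\ell$ forces $\ell\ge2$ and hence $r\ge3$, which the borderline binomial estimate needs. Granting those two facts, everything else reduces to elementary counting.
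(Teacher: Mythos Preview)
Your proof is correct and follows a genuinely different route from the paper's. The paper splits according to whether $\ell=1$ or $\ell>1$; in the second case it fixes $a=|e\cap[\ell+2s]|$ and builds the two required members of $\mathcal{F}_s$ from scratch by first filling from $[\ell+2s]\setminus e$, then possibly from $e$, then from the complement, going through several sub-cases depending on the relative sizes of $a$, $s$, $\ell$, and $r$. You instead split according to whether $s<r-\ell$ or $s=r-\ell$. In the first (and main) case you never abandon the witness $F_0$: a single-element swap $F_1=(F_0\setminus\{x\})\cup\{y\}$ already does the job, with the choice of $x$ governed only by whether $F_0\subseteq A$ and the choice of $y$ governed only by whether $|e\cap F_0|=\ell-1$. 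This is shorter and more transparent than the paper's reconstruction argument, and the key enabling observation---that $s<r-\ell$ forces $n\ge 2r-\ell+2$, using the hypothesis $n>2r$ when $\ell=1$---is exactly the right way to make the swap go through. Your treatment of the boundary case $s=r-\ell$ via a direct binomial count is also clean; the paper handles this case within its uniform construction rather than singling it out. Both arguments are elementary, but yours has fewer moving parts.
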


\begin{proof} We first consider the case $\ell=1$. From Theorem \ref{otto1},
the constant $s$ must have value $0$, while $n>2r$ by hypothesis.
In particular, $e$ does not contain $1$, whereas every
element $f \in \mathcal{F}_0$ contains $1$. Since  $n>2r$
 there are at least $r$ elements in $[n]$ disjoint from $e \cup \{1\}$,
hence we may define at least $\binom{r}{r-1}=r>1$ $r$-sets
in $\mathcal{F}_0$ that are disjoint from $e$.    

We now assume that $\ell>1$. Clearly, $s \leq r-\ell$ in this case, and Theorem \ref{otto1} implies $$n \geq (r-\ell+1)\left(2+\frac{\ell-1}{s+1}\right) \geq (r-\ell+1)\left(2+\frac{\ell-1}{r-\ell+1}\right)=2r-\ell+1.$$
Let $a=|e \cap [\ell+2s]|$. From the $\ell+2s-a$ elements in $[\ell+2s]\setminus e$, we choose either $\ell+2s-a$ or $r$, whichever is smaller.

If $r$ elements have been chosen, we are done, as we obtained an element $f$
of $\mathcal{F}_s$ that is both fully contained in $[\ell+2s]$,
hence $\ell+s \leq r \leq \ell+2s$, and disjoint from $e$.
Note that at least one of these inequalities is strict, as the converse would imply $s=0$ and $r=\ell$, contradicting our hypothesis. If $\ell+s < r$, the substitution of any element of $f$ by an element of $e$ yields an element of $\mathcal{F}_s$ that is not $\ell$-intersecting with $e$, since $\ell>1$. If $r < \ell+2s$, a second element of $\mathcal{F}_s$ whose intersection with $e$ has size at most one may be built through the substitution of any element of $f$ by an element of $[\ell+2s] \setminus f$.

Therefore we assume that $\ell+2s-a<r$. Keep in mind that we are building elements $g \in \mathcal{F}_s$ that are not $\ell$-intersecting with $e$ and that the $\ell+2s-a$ elements in $[\ell+2s] \setminus e$ have been added to $g$.  There are two cases, according to the relative order of $\ell+2s-a$ and $\ell+s$.

If $\ell+2s-a \geq \ell+s$, we add elements of $e$ to $g$ until $g$ has $r$ elements or $|g \cap e| = \ell-1$. It is clear that this addition can be done in more than one way, as at least one element has to be added, but clearly fewer than $|e|=r>1$ can be added.  At this point, either any such $g$ is an $r$-set, in which case we are done, or $r-|g|=r-2\ell-2s+a+1 \geq 1$. The number of elements of $[n]$ that are neither in $[\ell+2s]$ nor in $e$ is given by $b=n-(\ell+2s)-(r-a)$. The inequality $n \geq 2r -\ell+1$ leads to $b \geq r-2\ell-2s+a+1$, so that $g$ may be extended to an $r$-set without affecting the size of its intersection with $e$. The first case is settled.    

If $\ell+2s-a < \ell+s$, we ensure that $g \in \mathcal{F}_s$ by adding $a-s$ elements from the $a$ elements in $e \cap [\ell+2s]$ to it. As in the previous case, we may then add further elements of $e$ to $g$ until their intersection is at most $\ell-1$ and then complete $g$ with elements neither in $[\ell+2s]$ nor in $e$, if needed. To finish the proof, we argue that this extension may be done in more than one way. The first step may be done in $\binom{a}{a-s} \geq 1$ ways. Once these $a-s$ elements are fixed, there are $r-a+s$ elements of $e$ remaining, from which we may still choose up to $\ell-1-a+s$. Clearly, $\ell-1-a+s<r-a+s$, hence the second extension can be done in more than one way unless $\ell-1-a+s=0$, which means that the first step already creates an intersection of size $\ell-1$ between $g$ and $e$. We now suppose the latter. Recall that the first step may be done in $\binom{a}{a-s}$ ways, which is larger than one unless
$a=a-s=\ell-1\geq 1$. However, if this is the case, we have $s=0$ and $|e \cap [\ell]|=\ell-1$, in particular there are $r+1$ elements in $e \cup [\ell]$. In this case, Theorem \ref{otto1} leads to
$$n \geq (\ell+1)(r-\ell+1) \geq 2r -\ell+2 = (r+1) + r-\ell +1,$$
as $(\ell+1)(r-\ell+1)-(2r -\ell+2)=\ell(r-\ell+1) - r - 1\geq 0$
since $2 \leq \ell \leq r-1$, hence $r \geq 3$. This implies that there are at least $r-\ell+1$ elements in $[n]$ outside $[\ell] \cup e$, from which we may easily build $\binom{r-\ell+1}{r-\ell}=r-\ell+1 \geq 2$ elements of $\mathcal{F}_s$ that are not $\ell$-intersecting with $e$. This concludes the proof of the lemma.  
\end{proof}

With Theorem \ref{otto1} and Lemma~\ref{auxlem1}, we are now ready to prove Theorem~\ref{otto2}. 
\begin{proof}[Proof of Theorem~\ref{otto2}]
Let $H=([n],E) $ be an $r$-uniform hypergraph. Consider a maximal $\ell$-intersecting family $F \subseteq E$. Let $\Delta$ be a $(2,\ell)$-coloring of the hyperedges of $H$. For every hyperedge $e \in E \setminus F$ there exists a hyperedge $f \in F$ such that $e$ and $f$ intersect in less than $\ell$ vertices, hence they are colored differently by $\Delta$.
Thus, having fixed the colors of hyperedges in $F$ in any way, the colors of all hyperedges $e \in E$ are uniquely determined. We conclude that
\begin{equation}\label{frida3}
\kappa (H, 2,\ell) \leq 2^{|F|} \leq 2^{ex(n,\mathcal{B}_{r,\ell})}.
\end{equation}
On the other hand, it is easy to see that the number of $(k,\ell)$-Kneser colorings in an $r$-uniform hypergraph whose hyperedges are given by an extremal configuration achieves equality in (\ref{frida3}). Indeed, all the hyperedges are $\ell$-intersecting and may therefore be colored with any of the two colors, independently of the assignment of colors to the other hyperedges.

We now show that such hypergraphs are the only extremal hypergraphs when $n$, $r$ and $\ell$ satisfy the conditions in the statement of Theorem \ref{otto1} and $n>2r$ if $\ell=1$. First, for equality to hold for a hypergraph $H$, the argument above implies
that $H$ contains an $\ell$-intersecting family $F$ of maximum size, which by Theorem \ref{otto1} is a permutation of
$\mathcal{F}_s$ or $\mathcal{F}_{s+1}$. For a contradiction, suppose that $H$ contains an additional
hyperedge $e$ that is not in $F$. By Lemma \ref{auxlem1}, there are at least two elements $f$ and $g$ in $F$ that are not $\ell$-intersecting with $e$. As a consequence, for any $(2,\ell)$-Kneser coloring $\Delta$ of $H$, we must have $\Delta(f)=\Delta(g)$. In particular, $F$ is a maximal $\ell$-intersecting family in $H$ whose members can be colored in at most $2^{|F|-1}$ ways by $(2,\ell)$-Kneser colorings. Thus $H$ is not extremal, concluding the proof. \end{proof}

Note that, in the case $\ell=1$ and $n=2r$, the one-to-one correspondence
between the extremal configurations for $\ex(n,\mathcal{B}_{r,\ell})$ and the extremal hypergraphs with respect to Kneser colorings does not hold.  Indeed, Theorem \ref{otto1} tells us that one of the extremal configurations
in this case would be the family $\mathcal{F}_0$ of all $r$-sets containing the element $1$. Consider the $r$-set $e=\{r+1,\ldots,2r\}$. It is clear that the only element $f \in \mathcal{F}_0$ that does not intersect $e$ is $f=\{1,\ldots,r\}$. In particular, Lemma \ref{auxlem1} does not hold and every Kneser coloring of the $r$-uniform hypergraph $H$ on $[n]$ with hyperedge set $\mathcal{F}_0$ can be extended to a Kneser coloring of the hypergraph $H'$ with the additional hyperedge $e$ by assigning to $e$ the opposite color of $f$, hence $H'$ is also extremal, despite having non-intersecting hyperedges.  
With this observation for $\ell=1$ and $n = 2r$,
 consider a maximal $\ell$-intersecting family ${\mathcal F}$ of
$r$-subsets of $[2r]$ of size $|{\mathcal F}| = \binom{2r-1}{r}$.
 For every $r$-subset $e \subseteq [2r]$, there is a unique $r$-subset $f \subseteq [2r]$ disjoint from it, namely its complement
$\overline{e}$.  Hence, for every family ${\mathcal G}$ of $r$-subsets
of $ [2r]$ with ${\mathcal F} \cap {\mathcal G} = \emptyset$, the union
 ${\mathcal F} \cup {\mathcal G}$ can be $2$-colored by $2^{|{\mathcal F}|}
=2^{\binom{2r-1}{r}}$ colorings. Moreover, this example also shows that,
for any fixed $k \geq 2$, we have
$$
\KC(2r,r,k,1) = k^{\binom{2r-1}{r}} (k-1)^{\binom{2r-1}{r}} = (k(k-1))^{\ex
(2r, {\mathcal B}_{r, \ell})}.
$$

Since Theorem \ref{otto1} gives the extremal configuration $\mathcal{F}_0$ for $n > (\ell +1)(r - \ell +1)$, we deduce from Theorem \ref{otto2}
that for $n > (\ell +1)(r - \ell +1)$ the extremal hypergraph for $KC(n,r,2,\ell)$ is precisely the $(n,r,\ell)$-star $S_{n,r,\ell}$, the $r$-uniform hypergraph on $n$ vertices whose hyperedges are all $r$-subsets of $[n]$ containing a fixed $\ell$-subset.

For Kneser colorings with at least three colors, we
 frequently use the following  lemma.
\begin{lemma}\label{L0}
Let $k \geq 2$ be an integer. All
optimal solutions $s=(s_1,\ldots,s_c)$ to the maximization problem
\begin{equation}\label{eqL1}
\begin{array}{l}
\max \prod_{i=1}^c s_c\\
c, s_1,\ldots,s_c \textrm{ positive integers,}\\
s_1+\cdots + s_c \leq k,
\end{array}
\end{equation}
have the following form.
\begin{itemize}
\item[(a)] If $k \equiv 0 ~(\bmod \, 3)$, then
$c = k/3$ and all the components of $s$ are equal to $3$.

\item[(b)] If $k \equiv 1 ~(\bmod \, 3)$, then either
$c=\left\lceil k/3\right\rceil$,
with exactly two components equal to $2$
and all remaining components equal to $3$, or
$c=\left\lfloor k/3\right\rfloor$,
with exactly one component equal to $4$ and all
remaining components equal to $3$.

\item[(c)] If $k \equiv 2 ~(\bmod \, 3)$, then
$c=\left\lceil k/3\right\rceil$
with exactly
one component equal to $2$ and all remaining components equal to $3$.
\end{itemize}
As a consequence, the optimal value of \eqref{eqL1} is
$\displaystyle{3^{ k/3 }}$ if $k \equiv 0 ~(\bmod \, 3)$, $\displaystyle{4 \cdot 3^{\left\lfloor k/3 \right\rfloor-1}}$ if
$k \equiv 1 ~(\bmod \, 3)$, and $\displaystyle{2 \cdot
3^{\left\lfloor k/3\right\rfloor}}$ if $k \equiv 2 ~(\bmod \, 3)$.
\end{lemma}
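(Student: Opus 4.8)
The plan is to treat the maximization problem~\eqref{eqL1} by exchange arguments: first I would reduce to the case in which the constraint is tight, then, through a handful of local substitutions that preserve feasibility, force the components to lie in $\{2,3,4\}$ and satisfy strong restrictions, and finally enumerate the surviving solutions according to the residue of $k$ modulo~$3$.

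First, at any optimal solution one has $\sum_{i=1}^c s_i = k$: otherwise, replacing some $s_i$ by $s_i+1$ keeps feasibility and multiplies the product by $(s_i+1)/s_i>1$. Assume henceforth $\sum_i s_i = k$; note that $c\ge 2$ whenever $s_i=1$ for some~$i$, since $c=1$ forces $s_1=k\ge 2$. I would then establish the following properties of an optimal $s$, in each case exhibiting a feasible modification with the same sum but strictly larger product, contradicting optimality:
\begin{itemize}
\item[(i)] no $s_i$ equals $1$: delete such an $s_i$ and replace some other $s_j$ by $s_j+1$, multiplying the product by $(s_j+1)/s_j>1$;
\item[(ii)] no $s_i$ is at least $5$: replace such an $s_i$ by the two entries $s_i-2$ and $2$, multiplying the product by $2(s_i-2)/s_i\ge 6/5>1$;
\item[(iii)] at most two of the $s_i$ equal $2$: replace three entries equal to $2$ by two entries equal to $3$, multiplying the product by $9/8$;
\item[(iv)] at most one $s_i$ equals $4$: replace two entries equal to $4$ by two entries equal to $3$ and one equal to $2$, multiplying the product by $9/8$;
\item[(v)] $s$ does not have both an entry equal to $2$ and an entry equal to $4$: replace such a pair by two entries equal to $3$, multiplying the product by $9/8$.
\end{itemize}
Call $s$ \emph{reduced} if all its entries lie in $\{2,3,4\}$, at most two equal $2$, at most one equals $4$, and not both a $2$ and a $4$ appear; by (i)--(v) every optimal solution is reduced, and conversely every non-reduced solution can be strictly improved, hence is not optimal.

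To finish, I would enumerate the reduced solutions of $\sum_i s_i=k$. Writing $t\in\{0,1,2\}$ for the number of entries equal to $2$ and $q\in\{0,1\}$ for the number equal to $4$ (so $tq=0$), the remaining $c-t-q$ entries equal $3$, and the constraint becomes $3c=k+t-q$. Reading this modulo $3$ together with $tq=0$, $t\le 2$, $q\le 1$ determines $(t,q)$ — and hence $c$ and the product $2^{t}4^{q}3^{\,c-t-q}$ — uniquely in each residue class, except when $k\equiv 1\pmod 3$, where both $(t,q)=(2,0)$ and $(t,q)=(0,1)$ occur and both give the product $4\cdot 3^{\lfloor k/3\rfloor-1}$. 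A direct check shows these reduced solutions are exactly the vectors described in (a)--(c); since all reduced solutions for a given $k$ have the same product while every non-reduced solution can be strictly improved, the reduced solutions are precisely the optimal ones, and this also yields the stated optimal values.

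Every step is an explicit two- or three-term substitution, so there is no genuine obstacle. The only points demanding care are the degenerate case in (i) (handled by the observation that $c\ge 2$ once the constraint is tight) and the residue bookkeeping in the last paragraph — in particular recognising that the two reduced forms arising for $k\equiv 1\pmod 3$ yield the same product and are therefore both optimal, which is exactly what makes case~(b) of the statement have two optimal shapes.
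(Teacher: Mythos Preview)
Your proposal is correct and follows essentially the same exchange-argument approach as the paper: tighten the constraint, eliminate $1$'s, eliminate entries $\ge 5$, and bound the number of $2$'s. The only cosmetic differences are that the paper splits an entry $\ge 5$ as $(3,s_j-3)$ rather than your $(s_i-2,2)$, and it normalises each $4$ to a pair of $2$'s (a value-preserving move) before invoking the ``at most two $2$'s'' rule, whereas you keep $4$'s and add rules (iv)--(v); your $(t,q)$ bookkeeping at the end is a tidier version of the same case split.
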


\begin{proof} Let $k \geq 2$ be fixed and let $s=(s_1,\ldots,s_c)$ be an optimal solution to (\ref{eqL1}). Note that $s_1+\ldots+s_c=k$, since otherwise
$(s_1+1,s_2,\ldots,s_c)$ would have larger value, contradicting the
optimality of $s$. Moreover, we must have $s_i>1$, for every
$i \in [c]$. Indeed, if one of the components, say $s_c$,
were equal to $1$, the vector $(s_1+1,s_2,\ldots,s_{c-1})$ would have larger value.

If $s$ has a component $ s_j \geq 5$, we can replace it by two
components $3$ and $s-3$ and increase the objective value as $3(s_j-3) >
s_j$. Iterating this we obtain a sequence containing
only the components $2,3,4$. Replacing each $4$ by two components, $2$ and
$2$, does not change the objective value. Finally, we establish that there are
at most two components equal to $2$, since three components equal to $2$
may be replaced by two components equal to $3$ with an increase in
the objective value.

In conclusion, for any $k \geq 2$, an optimal solution has as many $3$'s
as possible such that one can sum
to $k$ with components equal to $2$. This provides
all optimal solutions to the optimization problem (\ref{eqL1})
unless $k \equiv 1~(\bmod \, 3)$, in which case the two occurrences of
 $2$ may be replaced by one occurrence of   $4$
without affecting the objective value.  \end{proof}

\section{Upper bounds on $\KC(n,r,k,\ell)$}\label{sec_upper}

This section is devoted to finding an upper bound on the function $\KC(n,r,k,\ell)$ for any fixed positive integers $r$, $k$ and $\ell$ with $\ell<r$. To do this, we introduce a generalization of the concept of a vertex cover of a graph.
\begin{definition}
For a positive integer $\ell$, an \emph{$\ell$-cover} of a hypergraph $H$ is a set $C$ of $\ell$-subsets of vertices of $H$ such that every hyperedge of $H$ contains an element of $C$. A \emph{minimum $\ell$-cover} of a hypergraph $H$ is an
\emph{$\ell$-cover} of minimum cardinality.
\end{definition}

Note that this definition coincides with the definition of a vertex cover of a
graph or hypergraph $H$ when $\ell=1$. We show that, for $r$, $k$ and $\ell$ fixed, a $(k,\ell)$-colorable hypergraph has a small $\ell$-cover. 
\begin{lemma}\label{small_c}
Let $r$, $k$ and $\ell$ be positive integers with $\ell<r$, and let $H=(V,E)$ be an $r$-uniform $(k,\ell)$-colorable hypergraph. Then $H$ has an 
$\ell$-cover $C$ with cardinality at most $k\binom{r}{\ell}$.
\end{lemma}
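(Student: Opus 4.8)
The plan is to build the $\ell$-cover directly from a fixed $(k,\ell)$-Kneser coloring $\Delta\colon E\to[k]$. Partition $E$ into color classes $E_1,\ldots,E_k$, where $E_j=\Delta^{-1}(j)$; each $E_j$ is an $\ell$-intersecting family of $r$-sets. It suffices to cover each class separately with at most $\binom{r}{\ell}$ many $\ell$-sets, since the union of these covers then has size at most $k\binom{r}{\ell}$ and clearly covers all of $E$.

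So I would reduce to the following claim: any $\ell$-intersecting family $\mathcal{F}$ of $r$-subsets of a ground set admits an $\ell$-cover of size at most $\binom{r}{\ell}$. To see this, fix any one hyperedge $f_0\in\mathcal{F}$ (if $\mathcal{F}=\emptyset$ take the empty cover). Let $C_0$ be the collection of all $\ell$-subsets of $f_0$; then $|C_0|=\binom{r}{\ell}$. For every other $f\in\mathcal{F}$ we have $|f\cap f_0|\geq\ell$, so $f$ contains some $\ell$-subset of $f_0$, i.e.\ some element of $C_0$; and $f_0$ itself contains every element of $C_0$. Hence $C_0$ is an $\ell$-cover of $\mathcal{F}$ of the required size. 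Combining over the $k$ color classes gives the bound.

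There is essentially no obstacle here — the argument is short — but two small points deserve care. First, one must note that $H$ being $(k,\ell)$-colorable is exactly what licenses the use of a coloring $\Delta$, so the hypothesis is used in an essential (if trivial) way: without it there need be no way to split $E$ into few $\ell$-intersecting pieces. Second, one should record that the bound is stated as ``at most'', so there is no issue if some color classes are empty or if the $\ell$-covers of different classes overlap; the union is still an $\ell$-cover and its cardinality is at most the sum of the cardinalities, namely $k\binom{r}{\ell}$. The resulting cover need not be minimum, which is fine for the intended application of feeding this bound into the structural analysis of Section~\ref{sec_upper}.

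Finally, I would remark that the estimate is deliberately crude: in most regimes of interest one expects an $\ell$-cover much smaller than $k\binom{r}{\ell}$, but this weak bound is already enough to show that $(k,\ell)$-colorable hypergraphs are, up to a bounded number of $\ell$-sets, ``star-like'', which is the only feature needed downstream.
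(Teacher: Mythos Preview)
Your proof is correct and essentially the same as the paper's: both select at most $k$ representative hyperedges and take all their $\ell$-subsets as the cover. The only cosmetic difference is that the paper obtains the representatives by noting that a $(k,\ell)$-colorable hypergraph cannot contain $k+1$ pairwise non-$\ell$-intersecting hyperedges (so a maximal such family has size at most $k$), whereas you pick one hyperedge per nonempty color class of a fixed coloring.
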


\begin{proof}
The $(k,\ell)$-colorability of $H$ ensures that there cannot be more than $k$ hyperedges that pairwise intersect in fewer than $\ell$ vertices. Hence there is a set $S\subseteq E$ of at most $k$ hyperedges such that every hyperedge of $H$ is $\ell$-intersecting with some element of $S$. In particular, the set 
$C=\{t \colon t \subset e \in S, |t|=\ell\}$
is an $\ell$-cover of $H$ with cardinality 
$|C|\leq k\binom{r}{\ell}$. 
\end{proof}

Given the number $k$  of colors, some functions of $k$, which we now define, are frequently used in the remainder of the paper.

\begin{definition}\label{defCND}
Let $k$ be a positive integer. Let $c(k)=\left\lceil k/3 \right\rceil$,
and let the functions $N(k)$ and $D(k)$ be defined by  
\begin{equation*}
\left\{
\begin{array}{llll}
\textrm{if }k \equiv 0 ~(\bmod \, 3),&N(k)=
\frac{k!}{(3!)^{ k/3  }} &\textrm{ and }D(k)=
3^{ k/3 }\\
\textrm{if }k \equiv 1 ~(\bmod \, 3),&N(k)=\binom{\left\lceil k/3
 \right\rceil}{2}\frac{k!}{4 \cdot (3!)^{\left\lceil k/3
\right\rceil-2}
} &\textrm{ and }
 D(k)=4 \cdot 3^{\left\lceil k/3 \right\rceil-2}\\
\textrm{if }k \equiv 2 ~(\bmod \, 3), &N(k)=\left\lceil
k/3 \right\rceil \frac{k!}{2 \cdot (3!)^{\left\lfloor k/3
\right\rfloor} }&\textrm{ and } D(k)=2 \cdot 3^{\left\lfloor k/3
\right\rfloor}.
\end{array}
\right.
\end{equation*}
\end{definition}

\begin{theorem}\label{T1}
Let $r \geq 2$ and $k \geq 3$ be positive integers and fix $\ell \in [r-1]$.
\begin{itemize}
\item[(i)] For $k=3$, there exists $n_0>0$ such that, for every $n \geq n_0$,
\begin{equation}\label{eqT1}
\KC(n,r,3,\ell) \leq 3^{\binom{n-\ell}{r-\ell}}.
\end{equation}
Moreover, for $n \geq n_0$, equality in \eqref{eqT1} is achieved only by the $(n,r,\ell)$-star $S_{n,r,\ell}$, the $r$-uniform hypergraph on $n$ vertices whose hyperedges are all $r$-subsets of $[n]$ containing a fixed $\ell$-set.

\item[(ii)] Given $k \geq 4$, there exists $n_0>0$ such that, for every $n \geq n_0$,
\begin{equation}\label{eqT1_2}
\KC(n,r,k,\ell) \leq N(k)k^{\binom{\ell c(k)}{\ell+1}\binom{n-\ell-1}{r-\ell-1}}D(k)^{\binom{n-\ell}{r-\ell}},
\end{equation}
where $c(k)$, $N(k)$ and $D(k)$ are defined in Definition \ref{defCND}.
\end{itemize}
\end{theorem}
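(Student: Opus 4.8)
The plan is to prove Theorem~\ref{T1} by counting Kneser colorings of a $(k,\ell)$-colorable hypergraph $H$ via a careful accounting of the $\ell$-cover structure provided by Lemma~\ref{small_c}. The guiding idea is that in any $(k,\ell)$-Kneser coloring, the hyperedges through a single $\ell$-set $t$ form an $\ell$-intersecting family (any two of them share $t$), so they can be colored freely; the difficulty is only in hyperedges that avoid~$t$, and in coordinating the colorings across several $\ell$-sets.

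\textbf{Setup and the case $k=3$.} Let $H=(V,E)$ be $(3,\ell)$-colorable. By Lemma~\ref{small_c}, $H$ has an $\ell$-cover $C$ of size at most $3\binom{r}{\ell}$, a constant. Fix a $(3,\ell)$-Kneser coloring $\Delta$. First I would argue that, for $n$ large, the three color classes $\Delta^{-1}(1),\Delta^{-1}(2),\Delta^{-1}(3)$ are each $\ell$-intersecting families, hence by the Ahlswede--Khachatrian Theorem~\ref{otto1} each has size at most $\ex(n,\mathcal{B}_{r,\ell})=\binom{n-\ell}{r-\ell}$ (the star bound, valid once $n>(\ell+1)(r-\ell+1)$). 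The key structural point is that if $E$ has at least one hyperedge outside the star $S_{n,r,\ell}$ (equivalently $H$ is not contained in a single star), then a short argument — modeled on the proof of Theorem~\ref{otto2} via Lemma~\ref{auxlem1} — shows that the number of colorings is $o(1)\cdot 3^{\binom{n-\ell}{r-\ell}}$, or more precisely strictly smaller, because the total number of hyperedges that may appear in any color class is bounded by a quantity strictly below $3\binom{n-\ell}{r-\ell}$, once one accounts for the fact that extremal $\ell$-intersecting families sharing hyperedges overlap heavily. Concretely: pick three pairwise ``conflicting'' hyperedges $e_1,e_2,e_3$ (these exist whenever $H$ is not a subfamily of an $\ell$-star and $n$ is large), forcing them to receive the three distinct colors in every coloring; then each color class is confined to the star around the relevant $\ell$-subset of its representative $e_i$, so $|E|\le 3\binom{n-\ell}{r-\ell} - \Omega(n^{r-\ell-1})$ and $\kappa(H,3,\ell)\le 3^{|E|}$ is then below $3^{\binom{n-\ell}{r-\ell}}$. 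If no such triple exists, $H$ lies in a single $\ell$-star, so $E\subseteq S_{n,r,\ell}$ and $\kappa(H,3,\ell)\le 3^{|E(S_{n,r,\ell})|}=3^{\binom{n-\ell}{r-\ell}}$, with equality only for $H=S_{n,r,\ell}$.

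\textbf{The case $k\ge 4$.} Here one uses Lemma~\ref{L0}. Fix a $(k,\ell)$-colorable $H$ with $\ell$-cover $C$, $|C|\le k\binom{r}{\ell}$, and fix a coloring $\Delta$. Partition the color set $[k]$ according to which $\ell$-set(s) of $C$ the color class ``lives in''. The heart of the argument is: group the $k$ colors into blocks $B_1,\dots,B_c$ so that colors in block $B_j$ are each concentrated around a common $\ell$-set $t_j\in C$; this is possible because a union of $|B_j|$ distinct $\ell$-intersecting families that pairwise conflict is (for large $n$) forced to share a common $\ell$-set by an EKR-type sunflower argument. For a block of size $s_j$ sharing the $\ell$-set $t_j$, the hyperedges through $t_j$ number $\binom{n-\ell}{r-\ell}$ and may be colored with any of the $s_j$ colors of that block, giving at most $s_j^{\binom{n-\ell}{r-\ell}}$ choices; multiplying over blocks and optimizing $\prod s_j$ subject to $\sum s_j\le k$ via Lemma~\ref{L0} yields the factor $D(k)^{\binom{n-\ell}{r-\ell}}$. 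The combinatorial prefactor $N(k)$ counts the ways to partition $[k]$ into blocks with the optimal multiset of sizes and to assign blocks to the $\ell$-sets $t_j$. The remaining factor $k^{\binom{\ell c(k)}{\ell+1}\binom{n-\ell-1}{r-\ell-1}}$ is the ``error term'': it bounds the number of colorings of the hyperedges that meet the union $\bigcup_j t_j$ of the $c(k)$ chosen $\ell$-sets in at least $\ell+1$ vertices (hence escape the clean ``through $t_j$'' description) — such hyperedges contain an $(\ell+1)$-subset of $\bigcup t_j$, and there are at most $\binom{\ell c(k)}{\ell+1}$ such $(\ell+1)$-subsets, each extendable to at most $\binom{n-\ell-1}{r-\ell-1}$ hyperedges, each colorable in $k$ ways.

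\textbf{Main obstacle.} The delicate step — and the one I expect to require the most care — is showing that once $n$ is large, the color classes really do organize into $c$ blocks each pinned to a single $\ell$-set, so that the only ``overflow'' is the controlled $(\ell+1)$-subset term; that is, ruling out colorings where a color class is an extremal or near-extremal $\ell$-intersecting family that is \emph{not} a star (possible for small $n$ by Theorem~\ref{otto1}, but not for $n$ beyond the Ahlswede--Khachatrian threshold) and, more subtly, controlling the interaction of the $\le k\binom{r}{\ell}$ sets in the $\ell$-cover $C$ — one must pass from the a priori constant-size cover to the precise set of $c(k)$ ``heavy'' $\ell$-sets, absorbing all contributions of ``light'' $\ell$-sets into the error term. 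This requires a stability-type argument: a color class of size $(1-o(1))\binom{n-\ell}{r-\ell}$ must be $o(\binom{n-\ell}{r-\ell})$-close to a star, and conflicting near-stars are either concentric or contribute negligibly. Carrying this out uniformly over all $H$ and all colorings, with the bookkeeping of which hyperedges are ``counted'' in which factor so as to avoid double counting, is the technical core; the optimization and the counting of $N(k)$ are then routine given Lemma~\ref{L0}.
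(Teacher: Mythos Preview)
Your $k=3$ argument contains a fatal arithmetic slip. You bound $|E|\le 3\binom{n-\ell}{r-\ell}-\Omega(n^{r-\ell-1})$ and then write $\kappa(H,3,\ell)\le 3^{|E|}$, claiming this lies below $3^{\binom{n-\ell}{r-\ell}}$. But $3^{|E|}$ with $|E|\approx 3\binom{n-\ell}{r-\ell}$ is of order $27^{\binom{n-\ell}{r-\ell}}$, far above the target. The trivial bound $\kappa\le 3^{|E|}$ is useless once $|E|>\binom{n-\ell}{r-\ell}$, which is exactly the regime you are in when $H$ is not contained in a single star. Relatedly, the assertion that ``each color class is confined to the star around the relevant $\ell$-subset of its representative $e_i$'' is not justified: an $\ell$-intersecting family containing a fixed $r$-set $e_i$ need not lie inside any star (for $\ell=1$ think of Hilton--Milner families). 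To push your line through you would need a stability theorem for $\ell$-intersecting families, and even then you must separately handle small color classes that can be arbitrary non-stars.

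The paper's argument sidesteps both issues and is worth contrasting. It fixes a \emph{minimum} $\ell$-cover $C=\{t_1,\dots,t_c\}$, strips off the edges meeting $\bigcup_m t_m$ in more than $\ell$ vertices (this is your correctly identified $k^{\binom{\ell c(k)}{\ell+1}\binom{n-\ell-1}{r-\ell-1}}$ factor), and on the remainder calls a color $\sigma$ \emph{substantial at $t_i$} if more than $L=(r-\ell)\binom{n-\ell-1}{r-\ell-1}$ edges of color $\sigma$ contain $t_i$. A direct count (Lemma~\ref{L1}) shows that once $\sigma$ is substantial at $t_i$, \emph{every} $\sigma$-edge contains $t_i$; no EKR stability is invoked. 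Colorings are then sorted by the influence vector $(s_1,\dots,s_c)$ recording how many colors are substantial at each $t_i$, and the total contribution of vectors whose nonzero entries are not optimal for Lemma~\ref{L0} is bounded by $D(k)^{(1-\gamma)\binom{n-N}{r-\ell}}$ (Lemma~\ref{L3}). For $k=3$ with $c>1$, minimality of the cover forbids the only optimal vector $(3)$ from occurring (Lemma~\ref{L2}), so \emph{every} coloring falls into the negligible regime and $\kappa(H,3,\ell)<3^{\binom{n-\ell}{r-\ell}}$ follows. For $k\ge4$ the optimal vectors contribute the $N(k)D(k)^{\binom{n-\ell}{r-\ell}}$ term. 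The point is that the ``blocks pinned to $\ell$-sets'' picture you describe is not assumed or deduced from EKR stability; it is \emph{forced} by the substantial/non-substantial dichotomy, with the non-substantial contribution absorbed into a lower-order error.
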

Note that Theorem~\ref{T1}(i) is just Theorem~\ref{thm_2and3}. Moreover, the upper bound on $\KC(n,r,k,\ell)$ given in Theorem~\ref{T1}(ii) is a byproduct of the
considerations proving part (i).  In Sections~\ref{sec_extremal}
and \ref{sec_asy} the asymptotic growth of $\KC(n,r,k,\ell)$ will be determined
precisely; however, these precise expressions are rather involved, as they arise from inclusion-exclusion.

\begin{proof} Let $r$, $k$ and $\ell$ be as in the statement of the theorem, and let $H=(V,E)$ be a $(k,\ell)$-colorable $r$-uniform hypergraph on $n$ vertices.

We start with an overview of the proof, which is structured in terms of a minimum $\ell$-cover  
$C=\{t_1,\ldots,t_c\}$ of $H$. By Lemma~\ref{small_c}, we already know that $c \leq k \binom{r}{\ell}$, so that the size of $C$ may not increase as a function of $n$ when we consider ever larger hypergraphs (with respect to the number of vertices $n$). Let $V_C=\cup_{i=1}^ct_i$ be the set of vertices of $H$ that appear in $C$. The set of hyperedges of $H$ will be split into $E=E' \cup F$, where $e \in E$ is assigned to $E'$ if $|e \cap V_C|=\ell$ and it is assigned to $F$ if $|e \cap V_C|>\ell$.

Since each element of $F$ has intersection at least $\ell+1$ with $V_C$, we have that, for $n$ sufficiently large, the size of $F$ is bounded above by 
$$|F| \leq \binom{\left| V_C
\right|}{\ell+1}\binom{n-|V_C|}{r-\ell-1},$$
which is asymptotically smaller than the largest possible size of $E'$, namely
$$\binom{\left| V_C \right|}{\ell}\binom{n-|V_C|}{r-\ell}.$$
As a consequence, the contribution of the $(k,\ell)$-colorings of $F$ will be treated as an `error', and we will focus on the structure of the colorings of $H'=H \setminus F=H[E']$. 

The main objective here is to show that the largest number of colorings of $H'$ is achieved when the size of the minimum vertex cover $C$ is equal to $c(k)$, and that the number of colorings is exponentially smaller when this is not the case. To this end, we shall show that the bulk of the colorings consists of colorings such that every color appears `many' times and that, when this happens, the coloring must be `star-like', in the sense that, for every given color $\sigma$, there must be a cover element contained in all the hyperedges colored $\sigma$. This will then be used, in conjunction with the proof of Lemma~\ref{L0}, to show that the best way to distribute the colors among the cover elements occurs when $|C|=c(k)$. Once this has been established, it suffices to combine the number of colorings in this setting with the `error' terms to achieve the upper bounds in the statement of the theorem.

We now proceed with a detailed proof of Theorem~\ref{T1}. For each
$\ell$-set $t_i \in C$,
we define the $(r-\ell)$-uniform hypergraph $H_i$ on the
vertex set $V'=V \setminus \bigcup_{i=1}^c
t_i$ such that an $(r-\ell)$-subset $e'$ of $V'$ is a hyperedge
in $H_i$ if and only if $e' \cup t_i$ is a hyperedge of $H$. Let $F$ be the
set of hyperedges of $H$ that do not have an $H_i$ counterpart.
These hyperedges contain at least $\ell+1$ vertices from $\bigcup_{i=1}^ct_i$, so that
$$|F| \leq \binom{\left| \bigcup_{i=1}^ct_i
\right|}{\ell+1}\binom{n-\ell-1}{r-\ell-1}.$$
Let $H'= H \setminus F$ be the subhypergraph of $H$ obtained by removing all
hyperedges in $F$. Moreover, any $(k,\ell)$-coloring of
$H$ is the combination of a $(k,\ell)$-coloring of $H'$ with a coloring of the
hyperedges in $F$ with at most $k$ colors. We know that there are at most
\begin{equation}\label{star1}
\displaystyle{k^{|F|} \leq k^{\binom{\left| \bigcup_{i=1}^ct_i \right|}{\ell+1}\binom{n-\ell-1}{r-\ell-1}} \leq k^{\binom{c\ell}{\ell+1}\binom{n-\ell-1}{r-\ell-1}}}
\end{equation}
colorings of the latter type, thus we now concentrate on $(k,\ell)$-colorings of $H'$.

Consider a $(k,\ell)$-coloring $\Delta$ of $H'$. For each
$\ell$-set $t_i \in C$ and each color
$\sigma \in [k]$, let $H_{i,\sigma}$ be the $(r-\ell)$-uniform
 subhypergraph of $H_i$ induced by the hyperedges of color $\sigma$. We say that $H_{i,\sigma}$ is \textit{substantial} the number of hyperedges in it is larger than
\begin{equation}\label{defL}
L=\max_{0 \leq m \leq \ell-1}{\binom{r-\ell}{\ell-m}\binom{n-2\ell+m}{r-2\ell+m}}.
\end{equation}
Observe that stating that $H_{i,\sigma}$ is substantial formalizes the notion of $\sigma$ appearing `many times', which was mentioned in the outline of the proof. We define $H_i$ to be $s$-\textit{influential} if there are precisely $s$ colors $\sigma$ for which $H_{i,\sigma}$ is substantial.

Our first auxiliary result shows that, if $H_{i,\sigma}$ is substantial, then all hyperedges with color $\sigma$ must contain $t_i$. Hence, given a color $\sigma$, there is at most one value of $i$ such that $H_{i,\sigma}$ is substantial, in which case we say that $\sigma$ is \emph{substantial} for the cover element $t_i$. Intuitively, the subgraph of $H'$ induced by $\sigma$ is a `star' centered at the cover element $t_i$.
\begin{lemma}\label{L1} If the subhypergraph
$H_{i,\sigma}$ is substantial and $e$ is a
hyperedge of $H$ with color $\sigma$, then $t_i \subseteq e$.
In particular, for $i'\neq i$, each
subhypergraph $H_{i',\sigma}$ does not contain any hyperedges.
\end{lemma}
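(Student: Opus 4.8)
The plan is to argue by contradiction. Suppose $H_{i,\sigma}$ is substantial but some hyperedge $e$ of $H$ receiving color $\sigma$ fails to contain $t_i$. Put $m=|e\cap t_i|$; since $t_i\not\subseteq e$ and $|t_i|=\ell$, we have $0\le m\le \ell-1$. Each hyperedge of $H_{i,\sigma}$ is an $(r-\ell)$-subset $e'$ of $V'=V\setminus\bigcup_j t_j$ such that $e'\cup t_i$ is a hyperedge of $H$ of color $\sigma$. Since the coloring is a $(k,\ell)$-coloring, $e$ and $e'\cup t_i$ must be $\ell$-intersecting; and as $e'\subseteq V'$ is disjoint from $V_C\supseteq t_i$, this reads $|e\cap e'|+|e\cap t_i|=|e\cap(e'\cup t_i)|\ge \ell$, i.e.\ $|e\cap e'|\ge \ell-m$. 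So it is enough to show that the number of $(r-\ell)$-subsets $e'$ of $V'$ with $|e'\cap(e\cap V')|\ge\ell-m$ is at most $L$.

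For the counting, first I would observe that any such $e'$ contains at least one $(\ell-m)$-subset of $e\cap V'$; picking such a subset and then completing $e'$ arbitrarily within $V'$ overcounts the admissible $e'$, so their number is at most $\binom{|e\cap V'|}{\ell-m}\binom{|V'|-\ell+m}{r-2\ell+m}$. Now the two estimates that make this work: since $e$ is a hyperedge of $H$ and $C$ is an $\ell$-cover, $e$ contains some $t_j$, hence $|e\cap V_C|\ge\ell$ and $|e\cap V'|\le r-\ell$; and $|V'|=n-|V_C|\le n-\ell$, so $|V'|-\ell+m\le n-2\ell+m$. Substituting, the number of admissible $e'$ is at most $\binom{r-\ell}{\ell-m}\binom{n-2\ell+m}{r-2\ell+m}\le L$ by the definition of $L$ in \eqref{defL}. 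This contradicts $H_{i,\sigma}$ being substantial (which means it has strictly more than $L$ hyperedges), so in fact $t_i\subseteq e$.

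Finally, for the ``in particular'' clause: if $i'\ne i$ and $H_{i',\sigma}$ had a hyperedge $e''$, then $e''\cup t_{i'}$ would be a $\sigma$-colored hyperedge of $H$; but $e''$ is disjoint from $V_C\supseteq t_i$ and $t_{i'}\ne t_i$ are distinct $\ell$-sets (so $t_i\not\subseteq t_{i'}$), whence $t_i\not\subseteq e''\cup t_{i'}$ --- contradicting the first part. Hence $H_{i',\sigma}$ is empty. This whole argument is a direct double-counting estimate; the only point requiring care --- and the reason $L$ is defined as a maximum over $0\le m\le\ell-1$ --- is that the bound $\binom{|e\cap V'|}{\ell-m}\binom{|V'|-\ell+m}{r-2\ell+m}$ obtained from an \emph{arbitrary} admissible value of $m=|e\cap t_i|$ must always be absorbed by $L$, and that in turn hinges on the inequality $|e\cap V'|\le r-\ell$, i.e.\ on $e$ containing a full cover element.
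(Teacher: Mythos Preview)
Your proof is correct and follows essentially the same approach as the paper: assume $t_i\not\subseteq e$, bound the number of $(r-\ell)$-sets $e'\subseteq V'$ for which $e'\cup t_i$ can be $\ell$-intersecting with $e$ by a quantity of the form $\binom{r-\ell}{\ell-m}\binom{n-2\ell+m}{r-2\ell+m}\le L$, and derive a contradiction with substantiality. Your choice of parameter $m=|e\cap t_i|$ is in fact slightly cleaner than the paper's, which sets $m=|t_i\cap t_{i'}|$ for a cover element $t_{i'}\subseteq e$; both parametrizations land in the range $0\le m\le \ell-1$ and feed into the same maximum defining $L$.
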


\begin{proof} Suppose for a contradiction that a hyperedge $e \in E$ has color $\sigma$, but $t_i \not \subseteq e$, and let $t_{i'}$ be an element in the $\ell$-cover $C$ contained in $e$.
By definition, the number of hyperedges $h$ in $H_{i,\sigma}$ whose intersection with $e$ has size at least $\ell$ is at most
$$\displaystyle{U=\binom{r-\ell}{\ell-|t_i \cap t_{i'}|}\binom{n-2\ell+
|t_i \cap t_{i'}|}{r-(\ell+|t_i \setminus t_{i'}|)},}$$
since any such $h$ must contain at least $\ell-|t_i \cap t_{i'}|$ elements of
$e\setminus t_{i'}$. Taking the maximum over all possible
sizes $|t_i \cap t_{i'}|$ of the intersection, we have, for $n$ sufficiently large,
$$U \leq \max_{0 \leq m \leq \ell-1}{\binom{r-\ell}{\ell-m}\binom{n-2\ell+m}{r-2\ell+m}} = L.$$
Since the subhypergraph $H_{i,\sigma}$ is substantial,
this is smaller than the number of hyperedges in $H_{i,\sigma}$,
contradicting the fact that the set of hyperedges
in color class $\sigma$ is $\ell$-intersecting. Thus, if $H_{i,\sigma}$ is substantial and $e$ is a hyperedge of $H$ with color $\sigma$, then $t_i$ is indeed a subset of $e$.

To conclude the proof, observe that, for $i'\neq i$, the elements of $H_{i'}$
are determined by all hyperedges $f$ of $H$ whose intersection with
$\bigcup_{m=1}^c t_m$ is equal to $t_{i'}$, hence $f$
does not contain $t_{i}$ and cannot have color $\sigma$. This proves that $H_{i',\sigma}$ has no hyperedges. \end{proof}

An immediate consequence of this lemma is the fact that, if all the colors are substantial for some cover element, then it must hold that, for every cover element $t_i$, there is a color $\sigma$ such that $H_{i,\sigma}$ is substantial.  
\begin{lemma}\label{L2}
If $C=\{t_1,\ldots,t_c\}$ is a minimum $\ell$-cover of $H$
such that there
exists a $(k,\ell)$-coloring $\Delta$ of $H$ for which the subhypergraph
 $H_{i_j}$ is
$s_{i_j}$-influential, where $s_{i_j} \geq 1$
for $j \in [m]$, and $s_{i_1}+\cdots+s_{i_m}=k$, then $m=c$.
\end{lemma}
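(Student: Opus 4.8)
The plan is to argue by contradiction, exploiting the minimality of the $\ell$-cover $C$ together with Lemma~\ref{L1}. Suppose that $m<c$, so that there is at least one cover element, say $t_{i_0}$, that is \emph{not} among $t_{i_1},\ldots,t_{i_m}$. Because $C$ is a \emph{minimum} $\ell$-cover, the set $C\setminus\{t_{i_0}\}$ fails to cover $H$, so there must be a hyperedge $e_0$ of $H$ whose only cover element inside $C$ is $t_{i_0}$; in particular $t_{i_j}\not\subseteq e_0$ for every $j\in[m]$. Let $\sigma_0=\Delta(e_0)$ be the color assigned to $e_0$ by the given $(k,\ell)$-coloring $\Delta$.

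Next I would use the hypothesis that $s_{i_1}+\cdots+s_{i_m}=k$: this says that every one of the $k$ colors is substantial for one of the cover elements $t_{i_1},\ldots,t_{i_m}$ (by Lemma~\ref{L1}, a color can be substantial for at most one cover element, so the $s_{i_j}$ substantial colors attached to distinct $t_{i_j}$ are pairwise distinct, and their total count $k$ forces all colors to occur). In particular the color $\sigma_0$ of $e_0$ is substantial for some $t_{i_j}$ with $j\in[m]$, i.e.\ $H_{i_j,\sigma_0}$ is substantial. By Lemma~\ref{L1}, every hyperedge of $H$ with color $\sigma_0$ must contain $t_{i_j}$. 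Applying this to $e_0$ gives $t_{i_j}\subseteq e_0$, contradicting the choice of $e_0$. Hence no such $t_{i_0}$ exists, so $m=c$.

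One subtlety to handle carefully is the claim that the substantial colors across the different $t_{i_j}$ are pairwise disjoint, so that their union really has size $s_{i_1}+\cdots+s_{i_m}$; this is exactly the content of the last sentence of Lemma~\ref{L1} (if $\sigma$ is substantial for $t_{i_j}$, then $H_{i',\sigma}$ has no hyperedges at all for $i'\neq i_j$, hence is certainly not substantial). Combined with $\sum_j s_{i_j}=k$ and the fact that there are only $k$ colors in total, this forces every color in $[k]$ to be substantial for precisely one of $t_{i_1},\ldots,t_{i_m}$. I expect this bookkeeping step — correctly translating ``$s_{i_j}$-influential'' into a statement about which colors are covered — to be the only place where any care is needed; the rest is a short, clean contradiction via minimality of the cover. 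No heavy computation is involved, since the quantitative work (the definition of \emph{substantial} and the bound $L$) has already been absorbed into Lemma~\ref{L1}.
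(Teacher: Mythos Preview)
Your proof is correct and follows essentially the same approach as the paper's: assume $m<c$, use minimality of $C$ to find a hyperedge not containing any of $t_{i_1},\ldots,t_{i_m}$, then invoke Lemma~\ref{L1} to conclude that its color, being substantial for some $t_{i_j}$, forces the hyperedge to contain $t_{i_j}$, a contradiction. The only cosmetic difference is that the paper directly observes that $\{t_{i_1},\ldots,t_{i_m}\}$ (having size $m<c$) cannot be an $\ell$-cover, whereas you first pick a specific missing element $t_{i_0}$ and remove it; both routes produce the required uncovered hyperedge.
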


\begin{proof} Suppose for a contradiction that $m<c$. Since the set $C'=\{t_{i_1},
\ldots,t_{i_m}\}$ is not an $\ell$-cover of $H =(V,E)$,
we may consider a hyperedge
$e \in E$  which does not contain any element from $C'$.
Without loss of generality, assume that $\Delta$ assigns color $k$ to $e$.

However, under the conditions in the statement, Lemma \ref{L1} implies that,
for every color $\sigma \in [k]$, there is $j \in [m]$
such that $H_{i_j,\sigma}$ is substantial. Moreover, all the hyperedges with
color $\sigma$ should contain $t_{i_j}$. This yields a contradiction,
since color $k$ cannot have this property. \end{proof}

We resume the proof of Theorem \ref{T1}. Recall that our objective is to show that the largest number of $(k,\ell)$-colorings is achieved by a hypergraph with $|C|=c(k)$. To this end, we count the colorings of $H'$ according to their distribution of substantial colors: given $j \in \{0,\ldots,k\}$,
let $\mathcal{I}_j$ be the set of all
non-negative integral solutions to the equation $s_1+\cdots +s_c=j$.
For any such vector $s=(s_1,\ldots,s_c)$, let $\Delta_s(H')$ be the
set of all $(k,\ell)$-colorings of $H'$ for which
$H_i$ is $s_i$-influential, for each $i \in [c]$.

An immediate consequence of Lemma \ref{L1} and (\ref{star1}) is
\begin{equation}\label{eq1}
\begin{split}
\kappa(H,k,\ell) &\leq k^{\binom{c\ell}{\ell+1}\binom{n-\ell-1}{r-\ell-1}}\sum_{j=0}^{k} \sum_{s \in \mathcal{I}_j} \left|\Delta_s(H')\right|.
\end{split}
\end{equation}
We now bound the number of colorings in $\Delta_s(H')$ for every fixed
vector $s=(s_1,\ldots,s_c)$ with non-negative integral components such
that $s_1+\cdots +s_c=j$, where $H_i$ is $s_i$-influential for
each $i \in [c]$.
 Clearly, the $j$ colors that contribute for
the hypergraphs $H_1, \ldots, H_c$ to be influential can be chosen in
$\binom{k}{j}$ ways. Moreover, these colors may be distributed
among the hypergraphs $H_1,\ldots, H_c$ in
$\frac{j!}{s_1!s_2!\cdots s_c!}$ ways.
Let $N = \vert \bigcup_{m=1}^c t_m \vert$.   Once the
$j$ colors are
distributed, the hyperedges in $H_i$ may be colored in at most
\begin{eqnarray*}
&& \sum_{(a_1,\ldots,a_{k-j})}
\left(\prod_{t=1}^{k-j}\binom{\binom{n-N}{r-\ell}}{a_t}\right)
s_i^{\binom{n-N}{r-\ell}}
\end{eqnarray*}
ways, if $s_i \geq 1$, where the sum is such that each $a_t$ ranges from $0$
to $L$. This is because $H_i$ contains at most
$\binom{n-N}{r-\ell}$ hyperedges, we may choose $a_t$,
$0 \leq a_t \leq L$,
of them to have each of the $k-j$ colors that do not contribute for an
$H_i$ to be influential, and all the remaining hyperedges may be colored with any of the $s_i$ colors that make $H_i$ $s_i$-influential.
We infer, by using $\binom{n-N}{r-\ell} \geq 2$ and $(x^{\ell+1}-1)/(x-1)
\leq 2x^{\ell}$ for $x \geq 2$, the upper bound
\begin{eqnarray}
&& \sum_{(a_1,\ldots,a_{k-j})}
\left(\prod_{t=1}^{k-j}\binom{\binom{n-N}{r-\ell}}{a_t}\right)
s_i^{\binom{n-N}{r-\ell}}
\leq
\sum_{(a_1,\ldots,a_{k-j})} \left(\prod_{t=1}^{k-j}
\binom{n-N}{r-\ell}^{a_t}\right)
s_i^{\binom{n-N}{r-\ell}} \nonumber \\
&=&
\sum_{(a_1,\ldots,a_{k-j})} \binom{n-N}{r-\ell}^{\sum_{t=1}^{k-j} a_t}
s_i^{\binom{n-N}{r-\ell}}
= \left( \sum_{p=0}^L  \binom{n-N}{r-\ell}^{p} \right)^{k-j}
s_i^{\binom{n-N}{r-\ell}} \nonumber \\
& =&
\left( \frac{\binom{n-N}{r-\ell}^{L+1} - 1}{\binom{n-N}{r-\ell}
-1}\right)^{k-j}
s_i^{\binom{n-N}{r-\ell}}
\leq
  2^{k-j}\binom{n-N}{r-\ell}^{L(k-j)}
s_i^{\binom{n-N}{r-\ell}}. \label{eq89}
\end{eqnarray}

If $s_i=0$, the hypergraph $H_i$ contains at most $L(k-j)$
hyperedges, which may be colored with at most $k-j$ colors in at most
\begin{equation}\label{eqsi0}
\begin{split}
&
(k-j)^{L(k-j)}
\end{split}
\end{equation}
ways.

Let $n_0>0$ be such that, for every $n \geq n_0$, the maximum of $L$ in (\ref{defL}) is attained by $m=\ell-1$, so that
\begin{equation*}
\begin{split}
L&= (r-\ell) \binom{n-\ell-1}{r-\ell-1}=\frac{(r-\ell)^2}{n-\ell} \binom{n-\ell}{r-\ell}.
\end{split}
\end{equation*}
In this case, we may derive the following from (\ref{eq89}) and (\ref{eqsi0}), observing that $c$ is an upper bound on the number of vanishing components in a vector $s=(s_1,\ldots,s_c)$:
\begin{eqnarray}\label{eqUB}
&& \sum_{s \in \mathcal{I}_j} \left| \Delta_s(H') \right| \leq \binom{k}{j}
2^{k-j}\binom{n-N}{r-\ell}^{L(k-j)}(k-j)^{cL(k-j)}\sum_{s \in \mathcal{I}_j}
\frac{j!}{s_1!s_2! \cdots s_c!}\prod_{i=1,s_i \neq 0}^c
s_i^{\binom{n-N}{r-\ell}} \nonumber \\
& =& \binom{k}{j} 2^{(k-j)
+L(k-j) \log{\binom{n-N}{r-\ell}} + cL(k-j) \log{(k-j)}}\sum_{s \in \mathcal{I}_j}
\frac{j!}{s_1!s_2! \cdots s_c!}\prod_{i=1,s_i \neq 0}^c
s_i^{\binom{n-N}{r-\ell}} \nonumber \\
& \leq& \binom{k}{j} 2^{(k-j) + (k-j)(r-\ell + c)\frac{(r-\ell)^2}{n-\ell}
\binom{n-\ell}{r-\ell} \log{n}
}\sum_{s \in \mathcal{I}_j} \frac{j!}{s_1!s_2! \cdots s_c!}\prod_{i=1,s_i
\neq 0}^c s_i^{\binom{n-N}{r-\ell}}.
\end{eqnarray}
In the last step, we used that $k-j \leq n$ and that
$ \log{\binom{n-N}{r-\ell}} \leq
(r-\ell) \log{n}$.

Observe that, for our fixed value of $k$, the product
$\prod_{i=1,s_i \neq 0}^c s_i^{\binom{n-N}{r-\ell}}$
is maximized when the nonzero components of $s$ are the components of a
vector in the set $\mathcal{S}(k)$ of optimal solutions to (\ref{eqL1}),
described in Lemma \ref{L0}. Recall that $D(k)$ given in the statement of
Definition  \ref{defCND} is precisely the optimal value of (\ref{eqL1}),
and, whenever the nonzero components of the integral vector
$s=(s_1,\ldots,s_c)$ are not an optimal solution to (\ref{eqL1}),
 let $\gamma>0$ be such that
\begin{eqnarray} \label{otto3}
&& \prod_{i=1,s_i \neq 0}^c s_i<D(k)^{1-3 \gamma}.
\end{eqnarray}

We are now ready to obtain an upper bound on the number of $(k,\ell)$-colorings of $H'$ associated with solutions of the equation $s_1+\cdots+s_c \leq k$ that are \emph{not} optimal with respect to (\ref{eqL1}). This will be used to show that most of the $(k,\ell)$-colorings of an extremal hypergraph $H'$ must be associated with optimal solutions to~(\ref{eqL1}).  
\begin{lemma}\label{L3}
Let $r \geq 2$, $k \geq 3$ and $\ell$ be positive integers with $\ell < r$. There exists $n_0$ such that, for every $n \geq n_0$,
the following property holds. Let $H$ be an $n$-vertex $r$-uniform hypergraph with an $\ell$-cover $C$ of cardinality $c$ where the union of its elements has size $N$, which
is independent of $n$. Then
$$k^{\binom{c\ell}{\ell+1}\binom{n-\ell-1}{r-\ell-1}}\sum_{j=0}^{k} \sum_{s \in \mathcal{I}_j \setminus \mathcal{S}(k)} \left|\Delta_s(H')\right| \leq D(k)^{\binom{n-N}{r-\ell}(1-\gamma)}.$$
In particular, if $\Delta_s(H')=\emptyset$ for
every $s=(s_1,\ldots,s_c)$ whose nonzero components are the components of a
vector in the set of $\mathcal{S}(k)$ to (\ref{eqL1}), then
$$\kappa(H,r,k) \leq D(k)^{\binom{n-N}{r-\ell}(1-\gamma)}.$$
\end{lemma}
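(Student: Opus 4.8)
The plan is to bound each summand in \eqref{eq1} using the estimate \eqref{eqUB} already derived, and to show that the dominant contribution comes from the term $j=k$ while all other terms, together with the prefactor $k^{\binom{c\ell}{\ell+1}\binom{n-\ell-1}{r-\ell-1}}$, are swallowed by an extra factor of $D(k)^{-\gamma\binom{n-N}{r-\ell}}$. First I would note that, by the definition of $\mathcal{S}(k)$ and \eqref{otto3}, for every $j$ and every $s\in\mathcal{I}_j\setminus\mathcal{S}(k)$ the product $\prod_{i:\,s_i\neq 0}s_i^{\binom{n-N}{r-\ell}}$ is at most $D(k)^{(1-3\gamma)\binom{n-N}{r-\ell}}$ when $j=k$, and when $j<k$ it is even smaller, since $\sum s_i=j<k$ forces $\prod s_i\le D(j)<D(k)$ for $j\ge 2$ (and is trivially $\le 1$ for $j\le 1$); in all these cases one gets a bound of the form $D(k)^{(1-3\gamma)\binom{n-N}{r-\ell}}$ after possibly shrinking $\gamma$. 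The number of vectors $s\in\mathcal{I}_j$ is at most $(k+1)^c$, a constant, and the multinomial factor $j!/(s_1!\cdots s_c!)$ is at most $k!$, another constant, so these contribute nothing on the exponential scale.

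Next I would collect the remaining factors in \eqref{eqUB}: the term $2^{(k-j)+(k-j)(r-\ell+c)\frac{(r-\ell)^2}{n-\ell}\binom{n-\ell}{r-\ell}\log n}$, which is $2^{O\!\left(\frac{\log n}{n}\binom{n}{r-\ell}\right)}$, and the prefactor $k^{\binom{c\ell}{\ell+1}\binom{n-\ell-1}{r-\ell-1}}$, which is $2^{O\!\left(\frac{1}{n}\binom{n}{r-\ell}\right)}$ since $\binom{n-\ell-1}{r-\ell-1}=\frac{r-\ell}{n-\ell}\binom{n-\ell}{r-\ell}$. Both of these are of the form $D(k)^{o(\binom{n-N}{r-\ell})}$: indeed $\binom{n-\ell}{r-\ell}$ and $\binom{n-N}{r-\ell}$ differ only by a factor $(1+o(1))$ since $\ell$ and $N$ are constants, so $\frac{\log n}{n}\binom{n-\ell}{r-\ell}=o\!\left(\binom{n-N}{r-\ell}\right)$, and likewise for the prefactor. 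Therefore, for $n$ large enough, the product of all these "error" factors is at most $D(k)^{2\gamma\binom{n-N}{r-\ell}}$. Multiplying the bound $D(k)^{(1-3\gamma)\binom{n-N}{r-\ell}}$ coming from the products $\prod s_i^{\binom{n-N}{r-\ell}}$ by $D(k)^{2\gamma\binom{n-N}{r-\ell}}$ and by the constantly-many constant-size combinatorial factors yields, after absorbing constants into a further $D(k)^{o(1)}$ loss, the desired bound $D(k)^{(1-\gamma)\binom{n-N}{r-\ell}}$.

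For the second assertion, if $\Delta_s(H')=\emptyset$ for every $s$ whose nonzero components form a vector in $\mathcal{S}(k)$, then the only surviving terms in \eqref{eq1} are exactly those indexed by $s\in\mathcal{I}_j\setminus\mathcal{S}(k)$, so the first inequality already gives $\kappa(H,k,\ell)\le D(k)^{(1-\gamma)\binom{n-N}{r-\ell}}$; this is just the first part applied with the optimal terms removed, and the typo "$\kappa(H,r,k)$" should read $\kappa(H,k,\ell)$.

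The main obstacle is purely bookkeeping: one must verify that every multiplicative factor other than $\prod s_i^{\binom{n-N}{r-\ell}}$ is genuinely subexponential relative to $D(k)^{\binom{n-N}{r-\ell}}$, which hinges on the single elementary but crucial observation that $\log\binom{n-N}{r-\ell}=O(\log n)$ while $\binom{n-N}{r-\ell}=\Theta(n^{r-\ell})$, so the $\log n$ in the exponent of the error term is dwarfed by the polynomial growth of $\binom{n-N}{r-\ell}$; choosing $\gamma$ (and then $n_0$) appropriately to absorb the $2\gamma$ slack against the $3\gamma$ gap from \eqref{otto3} is then automatic.
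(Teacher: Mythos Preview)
Your proposal is correct and follows essentially the same route as the paper's proof: bound $\prod_{s_i\neq 0} s_i$ by $D(k)^{1-3\gamma}$ via \eqref{otto3}, show that the remaining factors (the prefactor $k^{\binom{c\ell}{\ell+1}\binom{n-\ell-1}{r-\ell-1}}$, the combinatorial constants, and the exponent $2^{(k-j)+(k-j)(r-\ell+c)\frac{(r-\ell)^2}{n-\ell}\binom{n-\ell}{r-\ell}\log n}$ from \eqref{eqUB}) are each $D(k)^{o(\binom{n-N}{r-\ell})}$ and hence at most $D(k)^{\gamma\binom{n-N}{r-\ell}}$ for large $n$, and multiply. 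One small remark: your caveat ``after possibly shrinking $\gamma$'' for $j<k$ is unnecessary, since \eqref{otto3} as stated already applies to \emph{every} vector whose nonzero components are not an optimal solution of \eqref{eqL1}, and any $s$ with $\sum s_i=j<k$ automatically falls into that class (optimal solutions sum to $k$ by Lemma~\ref{L0}); so the same fixed $\gamma$ works throughout, exactly as in the paper.
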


\begin{proof} Let $H$ be such an $r$-uniform hypergraph and choose $n_0$ sufficiently large so that, for every $n \geq n_0$,
\begin{equation*}
\begin{array}{l}
k^{\binom{c\ell}{\ell+1}\binom{n-\ell-1}{r-\ell-1}}=k^{\binom{c\ell}{\ell+1}\frac{r-\ell}{n-r}\binom{n-\ell-1}{r-\ell}}<D(k)^{\gamma\binom{n-N}{r-\ell}} \textrm{ and }\\
(k+1)! \binom{k+c-1}{c-1}
2^{2k + k(r-\ell +c)\frac{(r-\ell)^2}{n-\ell} \binom{n-\ell}{r-\ell} \log{n}
}<D(k)^{\gamma\binom{n-N}{r-\ell}}.
\end{array}
\end{equation*}

The inequalities  (\ref{eqUB}) and (\ref{otto3}) imply that
\begin{eqnarray}\label{eq2b}
&& k^{\binom{c\ell}{\ell+1}\binom{n-\ell-1}{r-\ell-1}}\sum_{j=0}^{k} \sum_{s \in \mathcal{I}_j \setminus \mathcal{S}(k)} \left|\Delta_s(H')\right| \nonumber \\
& \leq&  D(k)^{\gamma\binom{n-N}{r-\ell}} \sum_{j=0}^{k} \binom{k}{j}
2^{(k-j) + (k-j)(r-\ell +c)\frac{(r-\ell)^2}{n-\ell} \binom{n-\ell}{r-\ell}
\log{n}} \binom{j+c-1}{c-1} j! D(k)^{\binom{n-N}{r-\ell}(1-3 \gamma)}
\nonumber \\
& \leq& (k+1)! \binom{k+c-1}{c-1}2^{2k + k(r-\ell
+c)\frac{(r-\ell)^2}{n-\ell} \binom{n-\ell}{r-\ell} \log{n}
}
D(k)^{\binom{n-N}{r-\ell}(1-2 \gamma)} \nonumber \\
&\leq&  D(k)^{\binom{n-N}{r-\ell}(1-\gamma)},
\end{eqnarray}
as required. Here, we are using the facts that $\mathcal{I}_j$, the set of
non-negative integral solutions to the equation $s_1+\cdots+s_c=j$,
has size $\binom{j+c-1}{c-1}$,
and that $\binom{k}{j}2^{k-j} \leq 2^{2k}$. The term $(k+1)!$ comes from
the multiplication of $k!$, the maximum possible value attained by $j!$ in the sum, by the number $k+1$ of summands.

When $\Delta_s(H')=\emptyset$ for
every $s=(s_1,\ldots,s_c)$, whose nonzero components are the components of a
vector in $\mathcal{S}(k)$, the fact that $\displaystyle{\kappa(H,r,k) \leq D(k)^{\binom{n-N}{r-\ell}(1-\gamma)}}$ is an immediate consequence of inequality
(\ref{eq1}) and the above. \end{proof}

To conclude the proof, we use the above discussion to prove the validity of (i) and (ii) in
Theorem \ref{T1}. For part (i), let $H =(V,E)$ with $|V| = n$
be a $(3,\ell)$-colorable $r$-uniform hypergraph and let
$C=\{t_1,\ldots,t_c\}$ be a minimum $\ell$-cover of $H$.

If $c=1$, we may use the immediate bound
$$\kappa(H,3,\ell) \leq 3^{|E|} \leq 3^{\binom{n-\ell}{r-\ell}},$$
with equality occurring if and only if $H$ is isomorphic to $S_{n,r,\ell}$, the $r$-uniform hypergraph on $n$ vertices whose hyperedges are all $r$-subsets of $[n]$ containing a fixed $\ell$-subset.

Now, suppose that $c>1$. By Lemma~\ref{small_c}, the $(3,\ell)$-colorability of $H$ ensures that
$c \leq 3 \binom{r}{\ell}$,
so that $c$ is independent of $n$. Moreover, with $c>1$, Lemma \ref{L2}
implies that $|\Delta_{s}(H')|=0$ for every vector $s=(s_1,\ldots,s_c)$ for which one of the entries is equal to $3$.
As a consequence, Lemma \ref{L3} with $r+1 \leq N \leq 3r$, where $\displaystyle{N=\left| \cup_{i=1}^{c}t_i \right|}$, implies that
\begin{equation*}
\kappa(H,3,\ell) \leq D(3)^{\binom{n-N}{r-\ell}(1-\gamma)} < 3^{\binom{n-\ell}{r-\ell}}
\end{equation*}
for $n$ sufficiently large. This proves part (i) of Theorem~\ref{T1}.

We now establish part (ii). First, we consider the  simpler case
$\displaystyle{k \not \equiv 1 ~(\bmod \, 3)}$. Fix a $(k,\ell)$-colorable
$r$-uniform hypergraph $H$ on $n$ vertices. Again, we choose a minimum $\ell$-cover
$C=\{t_1,\ldots,t_{c}\}$ of $H$. Observe that $c \leq k \binom{r}{\ell}$ is independent of $n$ by Lemma~\ref{small_c}.

Recall that $\mathcal{S}(k)$ is the set of optimal solutions
$s=(s_1,\ldots,s_c)$ of the maximization problem (\ref{eqL1}) given by Lemma
\ref{L0}. By Lemmas \ref{L3} and  \ref{L2}, if $c \neq c(k)$, we have
\begin{equation}\label{tapsi2}
\kappa(H,k,\ell) \leq D(k)^{\binom{n-N}{r-\ell}(1-\gamma)}.
\end{equation}
If  $c=c(k)$,
inequality  (\ref{eq1}) leads to
\begin{eqnarray}\label{tapsi3}
&& \kappa(H,k,\ell) \nonumber \\
&\leq& k^{\binom{c(k)\ell}{\ell+1}\binom{n-\ell-1}{r-\ell-1}}\sum_{j=0}^{k} \sum_{s \in \mathcal{I}_j} \left|\Delta_s(H')\right| \nonumber \\
&=& k^{\binom{c(k)\ell}{\ell+1}\binom{n-\ell-1}{r-\ell-1}}
\left(\sum_{s \in \mathcal{S}(k)} \left|\Delta_s(H')\right| +
\sum_{j=0}^{k-1} \sum_{s \in \mathcal{I}_j} \left|\Delta_s(H')\right| +
\sum_{s \in \mathcal{I}_k \setminus \mathcal{S}(k)} \left|\Delta_s(H')\right|
\right).
\end{eqnarray}
On the one hand, using (\ref{eqUB}) with $j=k$, and
 with the sum restricted to $\mathcal{S}(k)$, we obtain
\begin{eqnarray}\label{tapsi4}
\sum_{s \in \mathcal{S}(k)} \left|\Delta_s(H')\right| &\leq&
\sum_{s \in \mathcal{S}(k)} \frac{k!}{s_1!s_2! \cdots s_c!}\prod_{i=1,s_i
\neq 0}^c s_i^{\binom{n-N}{r-\ell}}
= N(k) D(k)^{\binom{n-N}{r-\ell}}.
\end{eqnarray}
Note that $N(k)$ is precisely the number of optimal solutions of (\ref{eqL1})
multiplied by the coefficient $\frac{k!}{s_1!s_2! \cdots s_c!}$. This product is
the same for every $s \in \mathcal{S}(k)$, as
$\displaystyle{k \not \equiv 1 ~(\bmod \, 3)}$.

On the other hand, with calculations as in  (\ref{eq2b}), we derive
\begin{equation}\label{tapsi5}
\begin{split}
\sum_{j=0}^{k-1} \sum_{s \in \mathcal{I}_j} \left|\Delta_s(H')\right| +
\sum_{s \in \mathcal{I}_k \setminus \mathcal{S}(k)} \left|\Delta_s(H')\right|
\leq D(k)^{(1- \gamma)\binom{n-N}{r-\ell}},
\end{split}
\end{equation}
which, for any fixed $\eps > 0$,
is smaller than $\displaystyle{\eps  D(k)^{\binom{n-N}{r-\ell}} }$ if $n$ is sufficiently large.

Note that $D(k)^{\binom{n-\ell}{r - \ell}} \geq 2
D(k)^{\binom{n - N}{r - \ell}}$
for $k \geq 4$, since $c(k) \geq 2$ implies $N \geq \ell +1$ in this case.
As a consequence of (\ref{tapsi4}) and (\ref{tapsi5}), we have
\begin{equation*}
\KC(n,r,k,\ell)=\max_H\kappa(H,k,\ell)\leq k^{\binom{c(k)\ell}{\ell+1}\binom{n-\ell-1}{r-\ell-1}}N(k)D(k)^{\binom{n-\ell}{r-\ell}},
\end{equation*}
which implies the upper bound in the statement of Theorem \ref{T1}(ii).

If $k \equiv 1 ~(\bmod \, 3)$, the proof requires some additional work.
Recall from Lemma \ref{L0} that, in this case, there are two essentially
different optimal solutions to  (\ref{eqL1}),
each containing $(\left\lceil k/3 \right\rceil-2)$ many
$3$'s, but one containing
two 2's, while the other contains one $4$. We now mimic the proof of the
previous case, omitting some of the details. As in
(\ref{tapsi2}), we obtain
$$\kappa(H,k,\ell) \leq D(k)^{\binom{n-N}{r-\ell}(1-\gamma)}$$
whenever $H$ is an $r$-uniform $n$-vertex hypergraph with minimum $\ell$-cover
of size $c \notin \{c(k)-1,c(k)\}$, where
$c(k)=\left\lceil k/3 \right\rceil$, since these are the two cover sizes corresponding to optimal solutions of (\ref{eqL1}). If $c=c(k)-1$,
we repeat the arguments used in (\ref{tapsi3})--(\ref{tapsi5}),
with $\mathcal{S}(k)$ being replaced by the set of
optimal solutions of (\ref{eqL1}) containing one $4$
and $N(k)$ being replaced by $$N'(k)=\left\lfloor \frac{k}{3}
 \right\rfloor \frac{k!}{4! (3!)^{\left\lfloor k/3 \right\rfloor-1}}.$$
The latter is just the number of ways of partitioning the $k$ available
colors among the sets in the cover of size $c(k)-1$ in such a way that the
sizes of the sets in the partition give an optimal solution of (\ref{eqL1})
containing one $4$. This leads to the upper bound
$$\kappa(H,k,\ell)\leq k^{\binom{(c(k)-1)\ell}{\ell+1}
\binom{n-\ell-1}{r-\ell-1}}N'(k)D(k)^{\binom{n-\ell}{r-\ell}}.$$
When $c=c(k)$, we may obtain the following bound using the same arguments:
$$\kappa(H,k,\ell)\leq k^{\binom{c(k)\ell}{\ell+1}
\binom{n-\ell-1}{r-\ell-1}}N(k)D(k)^{\binom{n-\ell}{r-\ell}}.$$
Since $$N'(k)=\left\lfloor \frac{k}{3} \right\rfloor \frac{k!}{4! \cdot
(3!)^{\left\lfloor
k/3 \right\rfloor-1}}<\binom{\left\lceil k/3 \right\rceil
}{2} \frac{k!}{2 \cdot 2 \cdot (3!)^{\left\lfloor k/3 \right\rfloor-1}
}=N(k),$$
we deduce with $N \geq \ell +1$ in both cases that
\begin{equation*}
\KC(n,r,k,\ell) \leq k^{\binom{c(k)\ell}{\ell+1}\binom{n-\ell-1}{r-\ell-1}}N(k)D(k)^{\binom{n-\ell}{r-\ell}},
\end{equation*}
which finishes the proof of Theorem \ref{T1}. \end{proof}

\section{Extremal hypergraphs}\label{sec_extremal}

Theorem \ref{otto2} and part (i) of Theorem \ref{T1} give, for $k= 2$ and
$k= 3$, the exact value of $\KC(n,r,k,\ell)$ for sufficiently large $n$.
Moreover, they determine that the set of extremal $r$-uniform hypergraphs
 $H$, i.e., of hypergraphs with the maximum number of $(k,\ell)$-Kneser
colorings, is precisely the set of $(n,r,\ell)$-stars, the $r$-uniform hypergraphs on $n$ vertices whose hyperedges are all $r$-subsets of $[n]$ containing a fixed $\ell$-subset. In this section, we find properties of extremal hypergraphs for larger values of $k$. In some cases, these properties determine precisely the set of extremal hypergraphs, in others, they only characterize families containing all the extremal hypergraphs. However, the number of $(k,\ell)$-Kneser colorings of all the hypergraphs in these families are ``almost" extremal, in a sense to be made precise in Section~\ref{sec_asy}.

The following two results, one concerning $k=4$ and the other $k \geq 5$, give properties of the hypergraphs with most $(k,\ell)$-Kneser colorings when $n$ is sufficiently large. The proof of the first theorem relies heavily on the arguments used to demonstrate the second theorem and therefore is postponed to the end of this section.

\begin{theorem}\label{T2_4}
Let $r \geq 2$ and $1 \leq \ell < r$ be positive integers. Given a positive integer $n$, let $H^{\ast}$
be an $r$-uniform hypergraph on $n$ vertices satisfying
$$\kappa(H^{\ast},4,\ell)=\KC(n,r,4,\ell).$$
There exists $n_0>0$ such that, if $n \geq n_0$, then $H^{\ast}$ is
isomorphic to $H_{C,r}(n)$ for some set $C=\{t_1,t_2\}$, where both
$t_1$ and $t_2$
are distinct $\ell$-subsets of $[n]$.
\end{theorem}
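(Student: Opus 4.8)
The plan is to pin down the number of colors that each cover element can host, then use the optimization counting (Lemma~\ref{L0} and the framework of Theorem~\ref{T1}) to force the cover to have size $c(4)=2$ and, finally, to compare the two-element covers with $|t_1\cap t_2|=\ell-1$ against all others. First I would invoke Lemma~\ref{small_c} to fix a minimum $\ell$-cover $C=\{t_1,\ldots,t_c\}$ of $H^{\ast}$ of size $c\le 4\binom{r}{\ell}$, so that $c$ and $N=|\bigcup t_i|$ are bounded independently of $n$. The lower bound $\kappa(H^{\ast},4,\ell)\ge\kappa(H_{n,r,4,\ell},4,\ell)$ from Definition~\ref{def_extremal} together with the upper bound of Theorem~\ref{T1}(ii) shows that the dominant term in $\kappa(H^{\ast},4,\ell)$ must be of the shape $D(4)^{\binom{n-N}{r-\ell}}$ with $N$ as small as possible; since $4\equiv 1\pmod 3$, $D(4)=4$ and the optimal solutions to~\eqref{eqL1} are the partitions $2+2$ and $4$ of $k=4$. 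By Lemma~\ref{L3} (the case $c\notin\{c(k)-1,c(k)\}=\{1,2\}$), any hypergraph whose minimum cover has size other than $1$ or $2$ has at most $D(4)^{\binom{n-N}{r-\ell}(1-\gamma)}$ colorings, which is negligible; and $c=1$ forces $H^{\ast}\subseteq S_{n,r,\ell}$, giving only $4^{\binom{n-\ell}{r-\ell}}$ colorings — here $N=\ell$, but one checks this is beaten by the $N=\ell+1$, $c=2$ construction $H_{n,r,4,\ell}$, whose count carries the extra factor $N(4)$ absent from a pure star. Hence $c=2$.

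With $c=2$ established, write $C=\{t_1,t_2\}$ and set $N=|t_1\cup t_2|$, so $\ell+1\le N\le 2\ell$, i.e.\ $N=2\ell-q$ where $q=|t_1\cap t_2|\in\{0,\ldots,\ell-1\}$. Using the refined count from the proof of Theorem~\ref{T1}(ii) in the case $c=c(k)$ (inequalities~\eqref{tapsi3}--\eqref{tapsi5}), the leading behaviour of $\kappa(H_{C,r}(n),4,\ell)$ is governed by $\binom{n-N}{r-\ell}$ in the exponent of $D(4)=4$, with lower-order corrections coming from the $k$-coloring of the $F$-hyperedges (those meeting $t_1\cup t_2$ in more than $\ell$ vertices) and from the inclusion–exclusion over hyperedges that lie in $H_1\cap H_2$. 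The key monotonicity observation is that $\binom{n-N}{r-\ell}$ is largest precisely when $N$ is smallest, i.e.\ when $q=\ell-1$, and that decreasing $N$ by one multiplies the dominant term by $4^{\Theta(n^{r-\ell-1})}$, a factor that swamps all the polynomially-many correction terms. I would also need to rule out that a pathological lower-order term could flip the comparison between two covers of the same $N$ but different internal structure; for $c=2$ the only freedom is $q$, so once $N$ (equivalently $q$) is fixed there is a single configuration up to isomorphism, and Definition~\ref{def_extremal} identifies it as $H_{C,r}(n)$ with $|t_1\cap t_2|=\ell-1$.

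The main obstacle I anticipate is the careful bookkeeping that distinguishes two covers of size $2$ with different intersection sizes: the upper bound of Theorem~\ref{T1}(ii) is stated with $N=\ell c(k)$ built in, which already hard-codes disjoint cover elements, whereas the true extremal cover for $k=4$ has $|t_1\cap t_2|=\ell-1$, so Theorem~\ref{T1}(ii) alone does not separate the candidates — one must redo the count of $\kappa(H_{C,r}(n),4,\ell)$ tracking the exact exponent of $D(4)$, which is $\binom{n-N}{r-\ell}$ rather than $\binom{n-\ell}{r-\ell}$, and verify that this exponent is strictly maximized at $N=\ell+1$. This is where I would be most careful: show that for any two cover elements $t_1,t_2$ with $|t_1\cap t_2|<\ell-1$ we have $\binom{n-|t_1\cup t_2|}{r-\ell}<\binom{n-\ell-1}{r-\ell}-C\log n$ for large $n$ (a crude estimate suffices), so that $\kappa(H_{C,r}(n),4,\ell)\le 4^{\binom{n-\ell-1}{r-\ell}}\cdot 4^{-\Omega(n^{r-\ell-1})}\cdot(\text{poly})<\kappa(H_{n,r,4,\ell}^{+},4,\ell)$ where $H^{+}$ denotes the optimal $q=\ell-1$ configuration; and conversely that this optimal configuration genuinely outperforms $H_{n,r,4,\ell}$ (the disjoint version) — which again comes down to comparing $\binom{n-\ell-1}{r-\ell}$ with $\binom{n-2\ell}{r-\ell}$. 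Once all these exponent comparisons are in place, $H^{\ast}$ must be isomorphic to $H_{C,r}(n)$ with $C=\{t_1,t_2\}$ and $|t_1\cap t_2|=\ell-1$, which is exactly what is stated (with the weaker claim that merely records $C=\{t_1,t_2\}$ distinct; the sharper intersection condition is recorded separately in Definition~\ref{optimal_hyper}(b)).
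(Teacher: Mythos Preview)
Your proposal contains a genuine mathematical error in the second half. You claim that for a cover $C=\{t_1,t_2\}$ with $N=|t_1\cup t_2|$, ``the leading behaviour of $\kappa(H_{C,r}(n),4,\ell)$ is governed by $\binom{n-N}{r-\ell}$ in the exponent of $D(4)=4$'' and hence is maximized when $N$ is smallest. This is false. For any $c=2$ cover with $|t_1\cap t_2|=y$, the star-coloring count is
\[
\binom{4}{2}\,2^{2\bigl(\binom{n-\ell}{r-\ell}-\binom{n-2\ell+y}{r-2\ell+y}\bigr)}\cdot 4^{\binom{n-2\ell+y}{r-2\ell+y}}
= 6\cdot 4^{\binom{n-\ell}{r-\ell}},
\]
which is \emph{independent of $y$} (equation~\eqref{klm2} in the paper). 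The hyperedges containing both $t_1$ and $t_2$ receive four colors each, exactly compensating for the ``lost'' hyperedges in the $H_i$'s. Consequently your monotonicity argument in $N$ has no traction, and your proposed comparison ``$\binom{n-|t_1\cup t_2|}{r-\ell}<\binom{n-\ell-1}{r-\ell}$'' is irrelevant to the leading term. This is not a technicality: it is the reason the problem is unstable for $k=4$ and $\ell>1$ (Theorem~\ref{Tint3}(ii)), and why pinning down $|t_1\cap t_2|=\ell-1$ (Theorem~\ref{theo1_inter}) requires a genuinely second-order analysis counting generalized star colorings and non-star colorings.

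Fortunately, Theorem~\ref{T2_4} asks for less than you are trying to prove: it only requires $c=2$, not a specific intersection size. The paper's proof accordingly does only two things. First, it invokes Lemma~\ref{124lemmmanew} to get completeness, i.e.\ $H^\ast=H_{C,r}(n)$; you omit this step entirely, jumping from ``$c=2$'' to analyzing $H_{C,r}(n)$ without justifying that $H^\ast$ contains every $r$-set covered by $C$. Second, it compares the $c=2$ count $6\cdot 4^{\binom{n-\ell}{r-\ell}}$ against the $c=1$ count $4^{\binom{n-\ell}{r-\ell}}$ via Lemma~\ref{L5}; the factor $6$ (which is $N(4)$) is what decides it, with the exponents \emph{equal}. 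Your $c=1$ versus $c=2$ comparison reaches the right conclusion but for the wrong stated reason, and you also misidentify $H_{n,r,4,\ell}$: by Definition~\ref{def_extremal} it has \emph{disjoint} cover elements, so $N=2\ell$, not $\ell+1$.
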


\begin{theorem}\label{T2}
Let $r \geq 2$, $k \geq 5$ and $1 \leq \ell < r$ be positive integers.
Given a positive integer $n$, let $H^{\ast}$
be an $r$-uniform hypergraph on $n$ vertices satisfying
$$\kappa(H^{\ast},k,\ell)=\KC(n,r,k,\ell).$$
Then there exists $n_0>0$ such that, if $n \geq n_0$, the following holds.
\begin{itemize}
\item[(a)] If $r \geq 2 \ell-1$, then $H^{\ast}$ is isomorphic to $H_{n,r,k,\ell}$.

\item[(b)] If $r < 2 \ell-1$, then $H^{\ast}$ is isomorphic to
$H_{C,r}(n)$ for some set $C=\{t_1,\ldots,t_{c(k)}\}$, where $t_1,
\ldots , t_{c(k)}$ are $\ell$-subsets
of $[n]$ such that $|t_i \cup t_j|>r$, for every $i,j \in [c(k)]$, $i \neq j$.
\end{itemize}
\end{theorem}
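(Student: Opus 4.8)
The plan is to prove Theorem~\ref{T2} and Theorem~\ref{T2_4} together, treating $k\geq 5$ in detail and then adapting to $k=4$. First I would fix a large $n$, take an extremal hypergraph $H^{\ast}$ on $[n]$, and compare it against the benchmark hypergraph $H_{n,r,k,\ell}$ (or the appropriate member of the candidate family). By Theorem~\ref{T1}(ii) we have the upper bound $\kappa(H^{\ast},k,\ell)\leq N(k)k^{\binom{\ell c(k)}{\ell+1}\binom{n-\ell-1}{r-\ell-1}}D(k)^{\binom{n-\ell}{r-\ell}}$, while a direct count on $H_{n,r,k,\ell}$ gives a lower bound of the same order $D(k)^{\binom{n-\ell}{r-\ell}(1-o(1))}$, since the $c(k)$ disjoint cover elements can be assigned disjoint blocks of colors according to an optimal solution of~\eqref{eqL1}. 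Consequently, letting $C=\{t_1,\ldots,t_c\}$ be a minimum $\ell$-cover of $H^{\ast}$ and $N=|\cup_i t_i|$, the bounds from Lemma~\ref{L3}, Lemma~\ref{L2}, and inequality~\eqref{tapsi2} force $c=c(k)$ and $N=\ell\,c(k)$; any cover that is not a disjoint union of $c(k)$ many $\ell$-sets would make $N$ strictly larger and cost a factor $D(k)^{(\binom{n-N}{r-\ell}-\binom{n-\ell c(k)}{r-\ell})}$, which is superpolynomial in $n$ and hence dwarfs the polynomial-in-$n$ gains available from the error terms. This already pins down the cover structure; the remaining work is to show that $H^{\ast}$ contains \emph{all} $r$-sets meeting some $t_i$ in a full $\ell$-set, i.e.\ that $H^{\ast}=H_{C,r}(n)$, and to handle the extra constraint $|t_i\cup t_j|>r$ when $r<2\ell-1$.

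For the ``completeness'' step I would argue by a swapping/exchange argument: suppose some $r$-set $e_0$ with $t_i\subseteq e_0$ is missing from $H^{\ast}$. The key structural input is Lemma~\ref{L1}: in the overwhelming majority of $(k,\ell)$-colorings, each color $\sigma$ is substantial for exactly one cover element $t_{i(\sigma)}$, and then every hyperedge of color $\sigma$ contains $t_{i(\sigma)}$; so such colorings are in bijection with assignments of the $r$-sets through $t_i$ to the block of colors allotted to $t_i$, independently across $i$ (up to the small overlap set $F$ and lower-order terms). Adding $e_0$ strictly increases the number of such ``star-like'' colorings by a multiplicative factor bounded below by something like $(s_i/(s_i-1))$ times corrections, while the error terms change only polynomially; this contradicts extremality of $H^{\ast}$. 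The same reasoning, run in reverse, shows that $H^{\ast}$ cannot contain an $r$-set avoiding every $t_i$, since by Lemma~\ref{auxlem1}-type reasoning (or directly, by Lemma~\ref{L1} and Lemma~\ref{L2}) such a hyperedge is forced to repeat a color already used elsewhere and only \emph{restricts} the colorings. For the case $r<2\ell-1$, disjointness of the $t_i$ is no longer forced — what matters is only that no single $r$-set can contain two distinct cover elements, i.e.\ $|t_i\cup t_j|>r$; if instead $|t_i\cup t_j|\leq r$ for some pair, then every $r$-set through $t_i$ that also contains $t_j$ can receive colors from two blocks, which breaks the independent-product count and, more importantly, lets one merge the two cover elements' color budgets suboptimally — a careful comparison with the product $\prod s_i^{\binom{n-N}{r-\ell}}$ shows this strictly decreases the leading term, again contradicting extremality.

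For $k=4$ (Theorem~\ref{T2_4}) the optimal solution to~\eqref{eqL1} with $k\equiv 1\pmod 3$ is either $(2,2)$ with $c=2$ or $(4)$ with $c=1$; Definition~\ref{defCND} records that $N(4)>N'(4)$, i.e.\ the $(2,2)$ configuration yields more colorings by a \emph{polynomial} factor even though the leading exponential term $D(4)^{\binom{n-\ell}{r-\ell}}$ is identical for $c=1$ and $c=2$. So here $c=2$ and $N^{\ast}$ need not equal $2\ell$ — the two $\ell$-sets $t_1,t_2$ may overlap, and indeed the finer count in Section~\ref{sec_asy} will reveal which overlap is best. The argument thus shows $c\in\{1,2\}$ (sizes corresponding to optimal solutions), rules out $c=1$ by the strict inequality $N(4)>N'(4)$ combined with the fact that the polynomial correction terms $k^{\binom{\ell c\,\ell}{\ell+1}\binom{n-\ell-1}{r-\ell-1}}$ with $F$-colorings only help the $c=2$ side, and then the same completeness/exchange argument forces $H^{\ast}=H_{C,r}(n)$ for $C=\{t_1,t_2\}$ with $t_1\neq t_2$.

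I expect the main obstacle to be the completeness/exchange step done rigorously: one must show that \emph{adding} a missing hyperedge through a cover element strictly increases $\kappa$. The subtlety is that non-star-like colorings (where some color is not substantial for any $t_i$, controlled by Lemma~\ref{L1} and the $L$-threshold of~\eqref{defL}) and colorings of the overlap set $F$ both contribute terms that change when an edge is added, and one has to verify these lower-order fluctuations cannot conspire to cancel the gain. Making the gain quantitative — separating a clean multiplicative improvement in the dominant ``product'' term from the polynomially-bounded noise of the $F$-colorings and the non-substantial colorings, using the estimates in~\eqref{eq89}, \eqref{eqUB}, and~\eqref{eq2b} — is where the real care lies; the cover-size determination, by contrast, follows fairly directly from Lemma~\ref{L3} and Lemma~\ref{L2}.
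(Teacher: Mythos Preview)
Your proposal has a genuine gap in the step where you pin down the structure of the cover. You claim that the bounds force $N=\ell\,c(k)$, i.e.\ that the cover elements must be pairwise disjoint, because ``any cover that is not a disjoint union of $c(k)$ many $\ell$-sets would make $N$ strictly larger and cost a factor $D(k)^{(\binom{n-N}{r-\ell}-\binom{n-\ell c(k)}{r-\ell})}$.'' This is backwards: $N=|\bigcup_i t_i|\leq \ell\,c(k)$, with equality exactly when the $t_i$ are disjoint; overlapping cover elements make $N$ \emph{smaller}, which makes $\binom{n-N}{r-\ell}$ \emph{larger}. So the comparison of upper and lower bounds cannot force disjointness in the way you describe, and in fact a smaller $N$ only loosens the upper bound you are trying to beat.

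Relatedly, your heuristic for the case $|t_i\cup t_j|\leq r$ does not work. You say that hyperedges containing both $t_i$ and $t_j$ ``can receive colors from two blocks, which breaks the independent-product count'' and ``strictly decreases the leading term.'' But those hyperedges have \emph{more} color choices ($s_i+s_j$ instead of $s_i$ or $s_j$), which increases the product over those edges; the loss, if any, must come from elsewhere, and it is not at all obvious how to isolate it. The paper handles this by a one-vertex swap (Lemma~\ref{L4}): given $v\in t_i\cap t_j$ and $w\notin\bigcup_m t_m$, replace $t_i$ by $t_i'=t_i\bigtriangleup\{v,w\}$, and compare the star-coloring counts of $H_{C,r}(n)$ and $H_{C',r}(n)$ via a carefully matched bijection $e\mapsto e\bigtriangleup\{v,w\}$ between the affected hyperedges (this is where Lemma~\ref{L7} enters). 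This comparison is where the real work of Theorem~\ref{T2} lies --- not in the completeness step, which is comparatively routine (Lemma~\ref{124lemmmanew}). Your plan misidentifies the hard step and offers an argument for the cover structure that points in the wrong direction.
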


\begin{proof} Fix positive integers
$k \geq 5$ and $r \geq 2$,  and $1 \leq \ell < r$. Given a positive
integer $n$, let $H^{\ast}$ be an $r$-uniform $n$-vertex
hypergraph  which satisfies
$\kappa(H^{\ast},k,\ell)=\KC(n,r,k,\ell)$.

As in the proof of Theorem \ref{T1},
we first focus on the case $k \not \equiv 1~(\bmod \, 3)$. We then adapt the proof for the case $k \equiv 1~(\bmod \, 3)$.
Let $D(k)$ be the optimal value and $\mathcal{S}(k)$ be the set of all optimal solutions $s=(s_1, \dots , s_{c(k)})$ of~(\ref{eqL1}), where $c(k)=\lceil k/3\rceil$ is also the number of components of such an optimal solution. We start with the following lower bound, which is easy to derive and very useful in upcoming considerations.
\begin{lemma} \label{lemma123new}
\begin{equation}\label{tapsi6}
\KC(n,r,k, \ell) \geq  \kappa(H_{n,r,k,\ell},k,\ell) \geq \left(\prod_{i=1}^{c(k)}s_i\right)^{\binom{n-\ell c(k)}{r-\ell}} = D(k)^{\binom{n-\ell c(k)}{r-\ell}}.
\end{equation}
\end{lemma}
\begin{proof}
Let $t_1,\ldots,t_{c(k)}$ be the mutually disjoint sets
in the $\ell$-cover $C$ of $H_{n,r,k,\ell}$,
let $s=(s_1,\ldots,s_{c(k)}) \in \mathcal{S}(k)$ and consider a
partition of the set of $k$ colors into sets $S_i$ with $|S_i|=s_i$,
$i \in [c(k)]$. Now, a $(k,\ell)$-Kneser coloring of $H_{n,r,k,\ell}$
can be obtained from any assignment of colors in $S_1$ to hyperedges containing $t_1$ as a subset, and any assignment of colors in $S_i$ to hyperedges with $\ell$-subset $t_i$, but not containing an $\ell$-set in $\{t_1,\ldots,t_{i-1}\}$, $i=2,\ldots,c(k)$. Thus, for the number of
$(k,\ell)$-colorings we obtain
\begin{eqnarray*} \label{eq300}
\KC(n,r,k,l) \geq  \kappa(H_{n,r,k,\ell},k,\ell)
 &\geq&
s_1^{\binom{n-\ell}{r-\ell}}s_2^{\binom{n-2\ell}{r-\ell}}\cdots s_{c(k)}^{\binom{n-c(k)\ell}{r-\ell}} \geq D(k)^{\binom{n-c(k)\ell}{r-\ell}}.
\end{eqnarray*}
\end{proof}

As in (\ref{otto3}), let $\gamma>0$ be such that
$$\displaystyle{\prod_{i=1,s_i \neq 0}^{c(k)} s_i<D(k)^{1-3 \gamma}}$$
whenever the nonzero components of the vector $s=(s_1,\ldots,s_{c})$ are
not an optimal solution to (\ref{eqL1}). Lemmas \ref{L3} and \ref{L2} imply that, for $n$ sufficiently large, if $H$
is an $r$-uniform hypergraph on $n$ vertices with minimum $\ell$-cover
of size $c$ satisfying
$$\kappa(H,k,\ell) \geq D(k)^{\binom{n-c(k)\ell}{r-\ell}(1-\gamma/2)},$$
then $c=c(k)$. Thus, equation (\ref{tapsi6}) implies that
a minimum $\ell$-cover of $H^{\ast}$ has size $c(k)$.

For later reference, we state the following fact as a remark.
\begin{remark}\label{rem_cover}
Fix positive integers $r>\ell$ and $k \not \equiv 1 ~(\bmod \, 3)$. Then, for every $\delta>0$, there exists $n_0>0$ such that any $r$-uniform hypergraph $H$ on $[n]$, $n>n_0$, with minimum cover of size $c \neq c(k)$ satisfies $\kappa(H,k,\ell)<\delta \KC(n,r,k,\ell)$.
\end{remark}

The remainder of the proof has two main parts. First, we  establish
that an extremal hypergraph $H^{\ast}$ for our property must be complete,
that is, it must contain every hyperedge that contains some
$\ell$-set in $C$. With this in hand, we
then prove (a) and (b) in Theorem~\ref{T2}
 by analyzing the interplay between overlappings
in the cover and the number of $(k,\ell)$-colorings.

\begin{lemma} \label{124lemmmanew}
Let $k,r \geq 2$,  and $1 \leq \ell < r$. Let $H^{\ast}=(V,E)$
be an $r$-uniform $n$-vertex
hypergraph with minimum $\ell$-cover $C$  which satisfies
$$\kappa(H^{\ast},k,\ell)=\KC(n,r,k,\ell).$$
Then there exists $n_0$, such that for every integer $n \geq n_0$
the hypergraph $H^{\ast}$ is complete, i.e., every $r$-subset of $V$
containing some set $t \in C$ is a hyperedge of $H^{\ast}$.
\end{lemma}

\begin{proof}
Let $H$ be an $r$-uniform hypergraph with minimum $\ell$-cover $C=\{t_1,
\ldots,t_c\}$, $c = c(k)$, and assume that $H$ is not complete.
Let $U=\bigcup_{i=1}^c t_i$ and $N=|U|$. Consider the case when there is an element $t_i$ in $C$
that covers at most $kL$ hyperedges
not covered by any other element of $C$, where, for $n$ sufficiently large,
$$L=(r-\ell) \binom{n-\ell-1}{r-\ell-1}$$
is precisely the quantity defined in (\ref{defL}).
Let $E_i$ be this set of
hyperedges, which is nonempty, since $C$ is a minimum $\ell$-cover.
Consider the $r$-uniform subhypergraph $H'=H \setminus E_i$ obtained from $H$ by
removing all hyperedges in $E_i$. Let $\mathcal{C}$ be the set of $(k,\ell)$-colorings of $H$ and let $\mathcal{C}_i$ and $\mathcal{C}'$ be the sets obtained by restricting the colorings in $\mathcal{C}$ to $E_i$ and $H'$, respectively. Clearly,
$$\kappa(H,k,\ell)=|\mathcal{C}|\leq |\mathcal{C}_i||\mathcal{C}'| \leq k^{kL} |\mathcal{C}'|.$$
On the other hand, given a coloring $\Delta \in \mathcal{C}$, there is a color $\sigma_i$ assigned by $\Delta$ to an element of $E_i$, since the latter is nonempty. In particular, Lemma \ref{L1} implies that $H_{j,\sigma_i}$ cannot be substantial for $j \neq i$ and, in particular, the restriction of $\Delta$ to $H'$ may have at most $k-1$ colors $\sigma$ for which $H'_{j,\sigma}=H_{j,\sigma}$ is
substantial. Hence, by Lemma~\ref{L3}, if $n$ is sufficiently large,
$$|\mathcal{C}'| < D(k)^{\binom{n-N}{r-\ell}(1-\gamma)}.$$
Moreover, for $n$ sufficiently large, we also have
$$k^{kL}=k^{k(r-\ell) \binom{n-\ell-1}{r-\ell-1}}<D(k)^{\gamma/2
\binom{n-N}{r-\ell}},$$
so that $$\kappa(H,k,\ell) < D(k)^{(1 - \gamma/2)\binom{n-N}{r-\ell}},$$
thus, with~(\ref{tapsi6}) the hypergraph  $H$ is not extremal for the property
of having the largest number of $(k, \ell)$-colorings.

Now, assume that every element in $C$ covers more than
$\displaystyle{kL}$ hyperedges not covered by any other
element of $C$. Let $e$ be an $r$-subset of $V$ containing $t_i \in C$ that
is not a hyperedge of $H$, and define $E_i$  as before. Such an $e$ exists
by the assumption that the hypergraph $H$ is not complete.  Let $\Delta$ be a $(k,\ell)$-Kneser coloring of $H$. By the pigeonhole principle, at least one of the colors, say $\sigma$, appears more than
$L$ times in $E_i$. Moreover, with counting arguments as in the proof of Lemma \ref{L1}, it is easy to see that, if a hyperedge $f$ were in this color class but did not contain $t_i$, then the number of elements of $E_i$ that share an $\ell$-subset with $f$ would be at most $L$, a contradiction. Hence all the
hyperedges assigned color $\sigma$ by $\Delta$ must contain $t_i$, so that $\Delta$ may be
extended to a Kneser-coloring of $H \cup \{e\}$ by assigning color
$\sigma$ to $e$.

Furthermore, there is at least one
$(k,\ell)$-Kneser coloring of $H$ using exactly $c$ colors,
namely the one that assigns color $1$ to all hyperedges containing $t_1$ and
color $i$ to all hyperedges containing $t_i$, but not containing an $\ell$-subset in the set
$\{t_1,\ldots,t_{i-1}\}$ for $i = 2, \ldots, c$. Since
$c=c(k) = \left\lceil k/3 \right\rceil
 \leq k-1$ for $k \geq 3$, we have at least two
options to color $e$, one using a color already used, and one using a new
color. As a consequence, the hypergraph $H \cup \{e\}$ has more $(k,\ell)$-colorings than $H$, establishing that such an $r$-uniform hypergraph $H$ cannot be extremal for the property of having the largest number of $(k,\ell)$-colorings.
\end{proof}

By Lemma~\ref{124lemmmanew}, we may assume in the following that $H$ is complete. Observe that the same conclusion could be reached when $k \equiv 1~(\bmod \, 3)$, but, unlike in the previous case, the size $c$ of a minimum $\ell$-cover of $H$ cannot be uniquely determined, we only know that $c \in \{c(k)-1,c(k)\}$.

In the notation of Definition~\ref{def_extremal}, let $H=H_{C,r}(n)$ for some
set $C=\{t_1,\ldots,t_c\}$ of $\ell$-subsets of $[n]$, $c=c(k)$
(if $k \equiv 1~(\bmod \, 3)$, we need to consider the case $c=c(k)-1$ as well).
It is clear that,
for $n$ sufficiently large, the set $C$ is the unique $\ell$-cover with minimum size of $H$.

To prove Theorem~\ref{T2}, we introduce a special class of
$(k,\ell)$-colorings of $H$.

\begin{definition}
 Let $s=(s_1,\ldots,s_c)$ be an optimal solution to \eqref{eqL1} and let $P(s)=(P_1,\ldots,P_c)$ be an ordered partition of set $[k]$ of colors into
sets such that $|P_i|=s_i$, for every $i \in [c]$. The set of
\emph{$(s,P(s))$-star colorings of $H$}, denoted by $\SC(H,P(s),k,\ell)$,
consists of all $(k,\ell)$-colorings $\Delta$ of $H$ such that, if $\sigma$
lies in $P_i$ and $\Delta(e)=\sigma$, then $e \supset t_i$. The set of
\emph{star colorings of $H$} is defined as
$$\SC(H,k,\ell)=\bigcup_{s \in \mathcal{S}(k)} \bigcup_{P \in \mathcal{P}_s} \SC(H,P,k,\ell),$$
where $\mathcal{P}_s$ is the set of all ordered partitions
$P(s)=(P_1,\ldots,P_c)$ of the set $[k]$ of colors such that $|P_i|=s_i$,
for every $i \in [c]$.

Moreover, set
$$\s(H,k,\ell)=|\SC(H,k,\ell)|.$$
\end{definition}

In other words, the set of star colorings of $H$ is the set of all colorings obtained by first splitting the set of $k$ available colors amongst the cover elements, so that the number of colors assigned to each cover element is given by an optimal solution to~(\ref{eqL1}), and then assigning to each hyperedge a color associated with a cover element contained in it.

The relevance of star colorings is highlighted by the following two results. The first uses the fact that star colorings generalize the special class of colorings considered in Section \ref{sec_upper} to establish that  the set of star colorings of a hypergraph $H$ provides a good approximation of the set of all $(k,\ell)$-colorings. The second result introduces a formula to approximate $\s(H,k,\ell)$, which, for extremal hypergraphs on $n$ vertices, gives the correct asymptotic value of $\KC(n,r,k,\ell)$.
Recall that $U=\bigcup_{i=1}^ct_i$ with
$N= \vert U \vert$.

\begin{lemma}\label{L5}
Let $k, \ell, r$ be positive integers with $\ell < r$. Then there exists $n_0$ such that, for every
$r$-uniform hypergraph $H$ on $n \geq n_0$
vertices with $\ell$-cover of size $c = c(k)$, we have
$$\kappa(H,k,\ell)-D(k)^{\binom{n-N}{r-\ell}(1-\gamma)} \leq \s(H,k,\ell) \leq \kappa(H,k,\ell).$$
\end{lemma}

\begin{proof} It is clear from the definition that $\SC(H,k,\ell)$ is contained in the set of all $(k,\ell)$-colorings of $H$, hence $\s(H,k,\ell) \leq \kappa(H,k,\ell)$. Split the set of all $(k,\ell)$-colorings of $H$ into the set $\mathcal{S}$ of star colorings and the set $\bar{\mathcal{S}}$ of remaining colorings. As a consequence, the result follows if we show that
the number of colorings in $\bar{\mathcal{S}}$ is
at most $\displaystyle{D(k)^{\binom{n-N}{r-\ell}(1-\gamma)}}$.

With the terminology of the proof of Theorem~\ref{T1}, let $\mathcal{C}$ be the family of $(k,\ell)$-Kneser colorings of $H$ for which:
\begin{itemize}
\item[(i)] every color $\sigma$ is such that $H_{i,\sigma}$ is substantial for
some $i \in [c]$;

\item[(ii)] the vector $(s_1,\ldots,s_c)$ lies in $\mathcal{S}(k)$, where
 $s_i=|\{\sigma :   H_{i,\sigma} \textrm{ is substantial}\}|$  
for each $i \in [c]$.
\end{itemize}
Combining inequality (\ref{eq1}) and Lemma \ref{L3}, we see, for $n$
sufficiently large, that the
set $\bar{\mathcal{C}}$ of all $(k,\ell)$-Kneser colorings of $H$ that are not in $\mathcal{C}$ satisfies
$$|\bar{\mathcal{C}}| \leq k^{\binom{c\ell}{\ell+1}\binom{n-\ell-1}{r-\ell-1}}\sum_{j=0}^{k} \sum_{s \in \mathcal{I}_j \setminus \mathcal{S}(k)} \left|\Delta_s(H')\right| \leq D(k)^{\binom{n-N}{r-\ell}(1-\gamma)}.$$
On the other hand, Lemma \ref{L2} tells us that, if $\Delta$ is a
$(k,\ell)$-Kneser coloring and $\sigma$ is a color for which $H_{i,\sigma}$ is
substantial with respect to some set $t_i$ in the cover, then $\Delta$ may
only assign color $\sigma$ to hyperedges containing $t_i$. Hence, any
coloring in $\mathcal{C}$ is also a star coloring, i.e., ${\mathcal C}
\subseteq {\mathcal S}$, thus  $|\bar{\mathcal{S}}|
\leq |\bar{\mathcal{C}}| \leq D(k)^{\binom{n-N}{r-\ell}(1-\gamma)}$
for $n$
sufficiently large,
 as required. \end{proof}

\begin{lemma}\label{L6}
Let $k, \ell, r$ be positive integers with $\ell < r$.
There exists $n_0>0$ such that, for every $n \geq n_0$,
the $(C,r)$-complete hypergraph $H=H_{C,r}(n)$ satisfies
$$\left(1-A \left(\frac{k-1}{k}\right)^{\binom{n-2\ell}{r-\ell}}\right) \sum_{s \in \mathcal{S}(k)} \sum_{P \in \mathcal{P}_s}  \prod_{e \in E} \left( \sum_{t_i \subset e} s_i \right) \leq  \s(H,k,\ell) \leq  \sum_{s \in \mathcal{S}(k)} \sum_{P \in \mathcal{P}_s} \prod_{e \in E} \left( \sum_{t_i \subset e} s_i \right),$$
where $A=A(k)$ is a function of $k$.
\end{lemma}

\begin{proof} The upper bound on $\s(H,k,\ell)$ follows directly from the definition. Indeed, given $s \in \mathcal{S}(k)$ and $P \in \mathcal{P}_s$ each hyperedge $e \in E$ may be assigned $\sum_{t_i \subset e}s_i$ colors by colorings in $\SC(H,P,k,\ell)$. Conversely, any such assignment gives a different $(s,P)$-star coloring, so that
$$|\SC(H,P,k,\ell)|=\prod_{e \in E} \left( \sum_{t_i \subset e} s_i \right),$$
and, as a consequence,
$$\s(H,k,\ell)=|\SC(H,k,\ell)|\leq \sum_{s \in \mathcal{S}(k)} \sum_{P \in \mathcal{P}_s} \prod_{e \in E} \left( \sum_{t_i \subset e} s_i \right).$$

To find a lower bound on $\s(H,k,\ell)$, we
bound from above
 the number of colorings that appear in multiple terms of the union
$$\bigcup_{s \in \mathcal{S}(k)} \bigcup_{P \in \mathcal{P}_s}
\SC(H,P,k,\ell).$$
Let $\Delta$ be a $(k,\ell)$-coloring $\Delta$ lying in $\SC(H,P,k,\ell) \cap
\SC(H,P',k,\ell)$, where $P=(P_1,\ldots,P_c) \in \mathcal{P}_{s}$ and $P'=(P'_1,\ldots,P'_c) \in \mathcal{P}_{s'}$, for $s,s' \in \mathcal{S}(k)$
not necessarily distinct. Then,
there must be a color $\sigma \in [k]$ such that
$\sigma \in P_i \cap P'_j$, $i \neq j$, so that every hyperedge assigned color
$\sigma$ by $\Delta$ contains both $t_i$ and $t_j$.

For $s,s'$, $P,P'$, $i$ and $j$ fixed, the number of colorings
with the above property is at most $M(s,s',P,P',i,j)$ with
\begin{equation*}
\begin{split}
M(s,s',P,P',i,j)& \leq \min\{s_i,s_j'\} \prod_{e \in E \setminus S_{i,j}} \left( \sum_{t_m \subset e} s_m  \right)\prod_{e \in S_{i,j}} \left( \sum_{t_m \subset e} s_m-1 \right)\\
& \leq  4 \left(\prod_{e \in E} \left( \sum_{t_m \subset e} s_m  \right)\right) \left(\prod_{e \in S_{i,j}}   \frac{
\sum_{t_m \subset e} s_m-1}{\sum_{t_m \subset e} s_m} \right),
\end{split}
\end{equation*}
where $S_{i,j}$ is the set of hyperedges of $H$ that contain $t_i$ but do
not contain $t_j$. Here, the term $\min\{s_i,s_j'\}$ is an upper bound on
the number of possible choices for the color $\sigma$.
The description of the set $\mathcal{S}(k)$ in Lemma \ref{L0} tells us that
$\min\{s_i,s_j'\} \leq 4$.
Finally, the term $\prod_{e \in S_{i,j}}
\left( \sum_{t_m \subset e} s_m-1 \right)$
accounts for the fact that $\sigma$ cannot be used to color the hyperedges
in $S_{i,j}$.

Since the hypergraph $H$ is complete,
$$|S_{i,j}| \geq \binom{n-|t_i \cup t_j|}{r-\ell} \geq \binom{n-2\ell}{r-\ell}.$$
Moreover, since
$\sum_{t_m \subset e} s_m \leq k$,
we have
$$
\frac{\sum_{t_m \subset e} s_m - 1}{\sum_{t_m
\subset e} s_m} \leq \frac{k-1}{k}.
$$
As a consequence,
$$M(s,s',P,P',i,j) \leq 4 \left(\frac{k-1}{k}\right)^{\binom{n-2\ell}{r-\ell}} \prod_{e \in E} \left( \sum_{ t_m \subset e} s_i \right).$$
Now, a generous upper bound on the number of possibilities for $s,s'$, $P,P'$, $i$ and $j$ is
$$\binom{|\mathcal{S}(k)|+1}{2} \binom{k!}{2} \binom{c}{2}k,$$
since there are at most $\binom{|\mathcal{S}(k)|+1}{2}$ ways
of choosing one or two elements of $\mathcal{S}(k)$, there are at most $k!$
partitions of the set $[k]$ of colors
into sets $P_1,\ldots,P_c$, and to choose two of them,
there are at most $\binom{c}{2}k$ ways of choosing two
 elements $t_i,t_j$ in the cover and a color $\sigma$.

It follows that
\begin{equation*}
\s(H,k,\ell) \geq \left(1-A(k) \left(\frac{k-1}{k}\right)^{\binom{n-2\ell}{r-\ell}}\right) \sum_{s \in \mathcal{S}(k)} \sum_{P \in \mathcal{P}_s}
\prod_{e \in E} \left( \sum_{t_m \subset e} s_i \right)
\end{equation*}
with $A(k)=4 \binom{|\mathcal{S}(k)|+1}{2}
\binom{k!}{2} \binom{c}{2}k$, as required. \end{proof}

We are now able to prove an auxiliary result that leads
to the proof of Theorem \ref{T2}.

\begin{lemma}\label{L4}
Let $k \geq 5$ be fixed.
Let $C=\{t_1,\ldots,t_c\}$ be an $\ell$-cover such that there exist
$i,j \in \{1,\ldots,c\}$, $i \neq j$, for which $|t_i \cap t_j| \geq 1$.
If  $|t_i \cup t_j| \leq r$, then there exists $n_0 > 0$ such that,
for $n \geq n_0$, the hypergraph
$H_{C,r}(n)$ is not extremal, i.e.,
$\kappa (H_{C, r}(n), k, \ell)  < \KC(n,r,k,l)$.
\end{lemma}


Before establishing this auxiliary result, we first argue that it leads to
the desired conclusion in Theorem \ref{T2},
at least in the case $k \not \equiv 1~(\bmod \, 3)$.


The lemma immediately implies that an extremal hypergraph $H^{\ast}$
on $n$ vertices is of the form $H_{C,r}(n)$, where $C=\{t_1,\ldots,t_c\}$ is such that, for each $i,j \in [c]$, $i \neq j$, either $t_i \cap t_j=\emptyset$ or $|t_i \cup t_j|>r$.

If $r \geq 2 \ell$ the latter condition cannot be satisfied, therefore it must be that $t_i$ and $t_j$ are disjoint, for every $i \neq j$, hence
the hypergraph  $H_{C,r}(n)$ is isomorphic to $H_{n,r,k,\ell}$. Moreover, if $r=2 \ell-1$, the condition $|t_i \cap t_j|=y>0$ implies that
$|t_i \cup t_j|=2\ell-y=r+1-y \leq r$, and therefore we must also have that all cover elements are disjoint in this case.

If $r< 2 \ell-1$, the condition $t_i \cap t_j=\emptyset$ tells us that $|t_i \cup t_j|=2 \ell>r$. In particular, the conditions $t_i \cap t_j=\emptyset$ or $|t_i \cup t_j|>r$ may be combined as $|t_i \cup t_j|>r$ for each $i,j \in [c]$, $i \neq j$. This yields our result.

When $k \equiv 1~(\bmod \, 3)$, this lemma also gives the structure of the
extremal hypergraph, but fails to determine whether the extremal
hypergraph has minimum cover size $c(k)$ or $c(k)-1$. This part is addressed at the end of the proof.

\begin{proof}[Proof of Lemma \ref{L4}] Fix $i$ and $j$ satisfying $t_i \cap t_j \neq \emptyset$ and $|t_i \cup t_j| = 2\ell - |t_i \cap t_j| \leq r.$

Let $U=\bigcup_{m=1}^c t_m$ and consider vertices $v \in t_i \cap t_j$ and $w \in [n] \setminus U$. Set $t_i'=t_i \bigtriangleup \{v,w\}$ and $C'=C \bigtriangleup \{t_i,t_i' \}$. When we think of $C'$ as an ordered set, we consider that $t_i'$ is the $i$-th element, while $t_1,\ldots,t_c$ have the position indicated by their index, as in $C$.

We claim that there exist $\delta>0$, $\xi \in (0,\gamma)$ and $n_0 \in \mathbb{N}$ such that
\begin{eqnarray} \label{e301}
\s(H_{C,r}(n),k,\ell) &\leq& \s(H_{C',r}(n),k,\ell)-\delta D(k)^{(1-\xi)
\binom{n-\ell}{r-\ell}}
\end{eqnarray}
for every $n \geq n_0$. Note that (\ref{e301})  implies our result, as
by Lemma \ref{L5}
we have
\begin{equation*}
\kappa(H_{C,r}(n),k,\ell) \leq \s(H_{C,r}(n),k,\ell) + D(k)^{(1-\gamma)\binom{n-N}{r-\ell}},
\end{equation*}
and
this may be rewritten as
\begin{equation}\label{mona1}
\begin{split}
\kappa(H_{C,r}(n),k,\ell) &\leq \s(H_{C',r}(n),k,\ell)-\delta D(k)^{(1-\xi)\binom{n-\ell}{r-\ell}} + D(k)^{(1-\gamma)\binom{n-N}{r-\ell}}\\
& \leq \kappa(H_{C',r}(n),k,\ell)-\delta D(k)^{(1-\xi)\binom{n-\ell}{r-\ell}} + D(k)^{(1-\gamma)\binom{n-N}{r-\ell}}.
\end{split}
\end{equation}
Lemma \ref{L4} now follows from the fact that
$$\delta D(k)^{(1-\xi)\binom{n-\ell}{r-\ell}} > D(k)^{(1-\gamma)\binom{n-N}{r-\ell}}$$
for $n$ sufficiently large, since $\xi<\gamma$ and $N \geq \ell$.

We now prove  inequality (\ref{e301}).
For simplicity, let $E=E(n)$ and $E'=E'(n)$ denote the sets of hyperedges of
$H=H_{C,r}(n)$ and $H'=H_{C',r}(n)$, respectively. Let $s=(s_1,\ldots,s_c)$
be an optimal solution to (\ref{eqL1}) and let $P=P(s)=(P_1,\ldots,P_c)$ be a
partition of the color set $[k]$ for which $|P_i|=s_i$, $i=1,\ldots,c$.
We define a function $\beta\colon E \longrightarrow \mathbb{N}$, where,
for $e \in E$, $\beta(e)=\beta_e=\sum_{t_i \subset e} s_i$.  Let $\beta'\colon
E' \longrightarrow \mathbb{N}$ be the analogous function
for $H_{C',r}(n)$. Consider the following families of $r$-subsets of $[n]$:
\begin{equation*}
\begin{array}{lll}
F_0&=&\{e \in [n]^r : t_i \cup t'_i \subseteq e\} \cup \{e \in [n]^r :
t_i,t'_i \not \subset e,  \exists g \neq i, ~t_g \subset e\}\\
F_1&=&\{e \in [n]^r : t_i \subset e, w \notin e, t_g \not\subset
e, \forall g \neq i\}\\
F'_1&=&\{e \in [n]^r : t'_i \subset e,  v \notin e, t_g \not\subset
e, \forall g \neq i\}\\
F_2&=&\{e \in [n]^r : t_i \subset e, w \notin e,  \exists g \neq i, ~t_g
\subset e\}\\
F'_2&=&\{e \in [n]^r : t'_i \subset e, v \notin e,  \exists g \neq i, ~t_g
\subset e\}.
\end{array}
\end{equation*}
Note that $E \cap E'=F_0 \cup F_2 \cup F'_2$, where the union is disjoint, while $E \setminus E'= F_1$ and $E' \setminus E= F'_1$. Moreover, by our definition of $\beta$ and $\beta'$, we have
\begin{equation*}
\begin{matrix}
\beta_e=s_i \textrm{ if }e \in F_1, & \beta'_{e}=\beta_e-s_i \geq 2
\textrm{ if }e \in F_2,\\
\beta'_{e}=s_i \textrm{ if }e \in F'_1, & \beta'_{e}=\beta_e+s_i
\geq 2+s_i \textrm{ if }e \in F'_2,\\
\beta'_e=\beta_e \textrm{ if }e \in F_0.&
\end{matrix}
\end{equation*}
By definition, we have
$$|\SC(H,P,k,\ell)|=\prod_{e \in E} \beta_e \textrm{ and }
|\SC(H',P,k,\ell)|=\prod_{e' \in E'} \beta'_{e'},$$
so that
\begin{eqnarray} \label{eq400}
 \frac{|\SC(H',P,k,\ell)|}{|\SC(H,P,k,\ell)|}
&=&
\frac{\left(\prod_{e' \in F'_1} s_i\right) \prod_{e \in E \cap E'} \beta'_e}{\left(\prod_{e \in F_1} s_i\right)
\prod_{e \in E \cap E'} \beta_e} \nonumber \\
&=&\frac{s_i^{|F'_1|-|F_1|}\left(\prod_{e \in F'_2} \beta'_e
\right)\left(\prod_{e \in F_2} \beta'_e \right)}{\left(\prod_{e \in F'_2}
\beta_e \right)\left(\prod_{e \in F_2} \beta_e \right)} \nonumber\\
&=&\frac{s_i^{|F'_1|-|F_1|}\left(\prod_{e \in F'_2} \beta'_e
\right)\left(\prod_{e \in F_2} \beta'_e \right)}{\left(\prod_{e \in F'_2}
(\beta'_e-s_i) \right)\left(\prod_{e \in F_2} (\beta'_e+s_i) \right)}.
\end{eqnarray}
Consider the function $\phi \colon F_1 \cup F_1'\cup F_2 \cup F_2'
\longrightarrow
F_1 \cup F_1'\cup F_2 \cup F_2'$ given by $\phi(e)=e \bigtriangleup \{v,w\}$.
It is easy to see that this function is its own inverse, in particular it is
injective. Moreover, our choice of $w \not\in U$ implies that
$\phi(F_1)\subseteq F_1'$ and $\phi(F_2') \subseteq F_2.$ Finally, observe that $\phi$ is a bijection between the sets $F'_1 \setminus \phi(F_1)$ and $F_2 \setminus \phi(F_2')$. To see why this last property is true, let $f'\in F'_1 \setminus \phi(F_1)$ and consider $f = \phi(f') =f'\bigtriangleup \{v,w\}$. Our choice of $f'$ implies that $f \notin F_1$, hence $f \in F_2$. However, $f \notin \phi(F_2')$ because $\phi(f)=f' \in F_1'$, so that $f\in F_2 \setminus \phi(F_2')$,
as claimed. The converse is analogous, and we infer that
\begin{eqnarray} \label{eq302}
&& |F_2|-|F_2'|=|F_2|-|\phi(F_2')|=|F_1'|-|\phi(F_1)|=|F_1'|-|F_1|
,  
\end{eqnarray}
and (\ref{eq400}) becomes
\begin{eqnarray} \label{eq401}
\frac{|\SC(H',P,k,\ell)|}{|\SC(H,P,k,\ell)|}&=&
\frac{s_i^{|F_2|-|F_2'|}\left(\prod_{e \in F'_2} \beta'_e
\right)\left(\prod_{e \in F_2} \beta'_e \right)}{\left(\prod_{e \in F'_2}
(\beta'_e-s_i) \right)\left(\prod_{e \in F_2} (\beta'_e+s_i) \right)}.
\end{eqnarray}

The following result is useful for our computations.
\begin{lemma}\label{L7}
Let $A=\{a_1,\ldots,a_p\}$ and $B=\{b_1,\ldots,b_q\}$ be sets of positive integers, let $m \in \{2,3,4\}$ and $M$ be positive integers, and suppose that $m+2 \leq a_i \leq M$, $2 \leq b_j \leq M$, for every $i$ and $j$, where
$q \geq \max\{p,1\}$. Let $\phi \colon[p]
\longrightarrow [q]$ be an injective function such that $a_i \leq b_{\phi(i)}+m$, for every $i \in [p]$. Then
\begin{eqnarray} \label{NEU1}
&& \frac{m^{q-p} \prod_{i \in A}a_i \prod_{j \in B} b_j}{\prod_{i
\in A}(a_i-m) \prod_{j \in B} (b_j+m)} \geq 1.
\end{eqnarray}
If $a_i< b_{\phi(i)}+m$, for some $i \in [p]$, then the right-hand side of
(\ref{NEU1})  
may be replaced by $1+\frac{m}{M^2-m^2}$. If $p < q$ and $\max\{m,b_j  : \;
j\in[q]\} \geq 3$, then the right-hand side of (\ref{NEU1})
 may be replaced by $\frac{6}{5}$.
\end{lemma}

\begin{proof}
We show Lemma \ref{L7} by induction on $p$.
If $p=0$, this amounts to proving that
$$m^{q} \prod_{j \in B} b_j \geq \prod_{j \in B} (b_j+m),$$
which is a consequence of $m\cdot b_j \geq b_j+m$, which holds for $m \geq 2$ and $b_{j} \geq 2$, for all $j \in [q]$.

Now, suppose that our result holds for $p-1 \geq 0$. Let $A=\{a_1,\ldots,a_p\}$,
$B=\{b_1,\ldots,b_q\}$, $k$ and $\phi$ be as in the statement of this lemma.
Let $i$ and $j=\phi(i)$ be such that $a_{i} \leq b_{j}+m$.
Then, with $A'=A\setminus\{a_i\}$, $B'=B \setminus \{b_j\}$ and $\phi'$ defined as the restriction of $\phi$ to $A'$, we have
\begin{equation*}
\begin{split}
m^{q-p} \prod_{i \in A}a_{i} \prod_{j \in B} b_{j}&=a_{i}b_{j}m^{q-p}
\prod_{i' \in A'}a_{i'} \prod_{j' \in B'} b_{j'}\\
& = \left((a_i-m)(b_j+m)+m(b_j+m-a_i)\right)  m^{(q-1)-(p-1)} \prod_{i' \in A'}a_{i'} \prod_{j' \in B'} b_{j'}\\
&\geq (a_i-m)(b_j+m) \prod_{i' \in A'}(a_{i'}-m) \prod_{j' \in B'} (b_{j'}+m)\\
&=\prod_{i \in A}(a_{i}-m) \prod_{j \in B} (b_{j}+m).
\end{split}
\end{equation*}
In the third line of the above equation, we are using the fact that $b_j+m-a_i \geq 0$ and the induction hypothesis. As a consequence,
$$\frac{m^{q-p} \prod_{i \in A}a_i \prod_{j \in B} b_j}{\prod_{i \in A}(a_i-m) \prod_{j \in B} (b_j+m)} \geq 1.$$
Moreover, if $b_j+m-a_i \geq 1$ for some $i \in [p]$ and $j=\phi(i)$, we have
\begin{equation*}
\begin{split}
\frac{a_ib_j}{(a_i-m)(b_j+m)}=1+\frac{m(b_j+m-a_i)}{(a_i-m)(b_j+m)}\geq 1 + \frac{m}{(M-m)(M+m)}.
\end{split}
\end{equation*}
Now, if $q>0$ and $\max\{m,b_{j} : \; j\in[q]\} \geq 3$, assume by symmetry
that $b_j \geq 3$. As in the base of induction, we replace $mb_j$ by $m+b_j$ and, using monotonicity and $m \geq 2$, we infer that
\begin{equation*}
\begin{split}
\frac{mb_j}{m+b_j}& \geq  \frac{3m}{m+3}
\geq \frac{6}{5},
\end{split}
\end{equation*}
as claimed.
\end{proof}

We now use Lemma \ref{L7} to evaluate  (\ref{eq401}). To this end,
let $A=F_2'$ and $B=F_2$, and, given $s =(s_1, \ldots , s_c)
 \in \mathcal{S}(k)$, define $a_{e'}=\beta'_{e'}$ for every $e'\in A=F_2'$ and $b_e=\beta'_e$
for every $e \in B=F_2$. It is clear that $s_{e'}+2 \leq a_{e'} \leq k$, as every element of $F_2'$ contains at least two elements in the $\ell$-cover of $H'$, one of them being $t_i'$, and $2 \leq b_e \leq k$, as every element of $F_2$ contains at least one element in the $\ell$-cover of $H'$. Thus we may set $m=s_i$ and $M=k$ in our application of Lemma \ref{L7}.

Let $\phi$ be again the bijection of $F_1 \cup F_1' \cup F_2 \cup F_2'$
on itself associating a hyperedge $e$ with $e \bigtriangleup \{v,w\}$,
which we have already seen to map $F_2'$ into $F_2$. If $e' \in F_2'$, we must
have $a_{e'} \leq b_{\phi(e')} + s_i$ because the only set in the cover
of $H'$ that covers $e'$ but does not cover $\phi(e')$ is $t_i'$,
by our choice of $w$.

We may apply Lemma \ref{L7} to (\ref{eq401}), for any partition $P \in \mathcal{P}_s$, and obtain
\begin{equation}\label{frida1}
\frac{|\SC(H',P,k,\ell)|}{|\SC(H,P,k,\ell)|}\geq 1.
\end{equation}
We now look at a particular solution $\hat{s} \in \mathcal{S}(k)$ for which
$\hat{s}_i=3$ for some $i$, which exists since $k\geq 5$.
Let $P=P(\hat{s})$ be a
partition of the color set, as before. We show that the inequality  (\ref{frida1}) becomes
 stronger in this case. To do this using Lemma \ref{L7}, we must show that, in the setting introduced above, we either have $|B|=|F_2|>|F_2'|=|A|$ or there exists $e' \in F_2'$ for which $a_e'<b_{\phi(e')}+\hat{s}_i$.

Consider an $r$-subset $f$ of $[n]$ such that  $f \cap (U \cup \{w\})=
t_i \cup t_j$, whose existence is guaranteed by our restriction
$|t_i \cup t_j| \leq r$ and by the fact that $n$ may be taken large enough so as to ensure the existence of sufficiently many elements outside $U$.
It is clear that $f \in F_2$, since $ w \not\in f$. Consider $f'=f \bigtriangleup \{v,w\} \in F'_1 \cup F'_2$. There are two cases:
\begin{itemize}
\item[(i)] if $f' \in F'_2$, then $\beta'_{f'}+\hat{s}_j-\hat{s}_i
\leq \beta'_{f}$, which implies $\beta'_{f'}<\beta'_{f}+\hat{s}_i$. This occurs because $t'_i \subset f', t'_i \not
\subset f$ and $t_j \not \subset f', t_j \subset f$, while our choice of $w$
also guarantees that, for $g \neq i$, there cannot be $t_g$ for which
$t_g \subset f'$ but $t_g \not \subset f$.

\item[(ii)] If $f'\in F'_1$,
then $|F'_1|>|F_1|$ because $\phi(f')=f$ does not lie in $F_1$. By (\ref{eq302})
this implies
that $q=|F_2|-|F_2'| = |F_1'|-|F_1| > 0$. Moreover, $\max \{m,b_e \; \vert \;
e \in B\} \geq \hat{s}_i \geq 3$.
\end{itemize}
As a consequence, we may apply Lemma \ref{L7} to (\ref{eq401})
and obtain for every $k \geq 5$:
\begin{eqnarray} \label{gap7}
\frac{|\SC(H',P(\hat{s}),k,\ell)|}{|\SC(H,P(\hat{s}),k,\ell)|}&\geq&
 \min\left\{\frac{6}{5},1+\frac{2}{k^2-16}\right\}>\frac{k^2-14}{k^2-15}.
\end{eqnarray}

Recall that, by Lemma \ref{L6}, we have
\begin{eqnarray}\label{tapsi8}
&& \s(H',k,\ell)-\s(H,k,\ell) \nonumber \\
&\geq& \left(1-A(k)\left(\frac{k-1}{k}\right)^{\binom{n-2\ell}{r-\ell}} \right)
\sum_{s \in \mathcal{S}(k)} \sum_{P' \in \mathcal{P}_s^{'}} |\SC(H',P',k,\ell)|
\nonumber \\
&& -
\sum_{s \in \mathcal{S}(k)} \sum_{P' \in \mathcal{P}_s^{'}} |\SC(H,P',k,\ell)|.
\end{eqnarray}
By our previous discussion, see (\ref{frida1}) and (\ref{gap7}),
 for $k \geq 5$, we have
\begin{eqnarray}\label{tapsi9}
&&\sum_{s \in \mathcal{S}(k)} \sum_{P' \in \mathcal{P}_s^{'}}
|\SC(H',P',k,\ell)|
- \sum_{s \in \mathcal{S}(k)} \sum_{P' \in \mathcal{P}_s^{'}}
|\SC(H,P',k,\ell)|
\nonumber \\
&\geq& \frac{1}{k^2-15}|\SC(H',P(\hat{s}),k,\ell)|
\nonumber \\
&\geq&  \frac{1}{k^2-15}  D(k)^{\binom{n-c\ell}{r-\ell}}.
\end{eqnarray}
The second to
last inequality may be derived with the same arguments used for establishing (\ref{tapsi6}). Also note that, given $\xi>0$ and $n$ sufficiently large, we have
\begin{equation}\label{tapsi10}
\begin{split}
D(k)^{\binom{n-c(k)\ell}{r-\ell}}&=D(k)^{\left(\prod_{a=1}^{r -
\ell}\frac{n-(c(k)-1)\ell-r+a}{n-r +a)}\right)\binom{n-\ell}{r-\ell}}
\geq D(k)^{(1-\xi)\binom{n-\ell}{r-\ell}},
\end{split}
\end{equation}
since
$$\lim_{n \rightarrow \infty} \prod_{a=1}^{r -
\ell}\frac{n-(c(k)-1)\ell-r+a}{n-r +a} = 1.$$
Thus we infer from (\ref{tapsi8})--(\ref{tapsi10}) that, for given $\xi > 0$
and $n$ sufficiently large,
\begin{eqnarray} \label{tapsi10a}
&& \s(H',k,\ell)-\s(H,k,\ell) \nonumber \\
&\geq&  \frac{1}{k^2 -15}
D(k)^{(1-\xi)\binom{n-\ell}{r-\ell}} -  A(k) \left(\frac{k-1}{k}\right)^{\binom{n-2\ell}{r-\ell}}
\sum_{s \in \mathcal{S}(k)} \sum_{P' \in \mathcal{P}_s^{'}}
|\SC(H',P',k,\ell)|.
\end{eqnarray}

For later use, fix $\xi<\min\{\frac{\gamma}{2},\frac{1}{3}\log_{D(k)}\frac{k}{k-1}\}$. By Lemmas \ref{L5} and \ref{L6}, for $n$ sufficiently large,
\begin{eqnarray}\label{tapsi11}
&& A(k) \left(\frac{k-1}{k}\right)^{\binom{n-2\ell}{r-\ell}}
\sum_{s \in \mathcal{S}(k)} \sum_{P' \in \mathcal{P}_s^{'}}
|\SC(H',P',k,\ell)| \nonumber \\
& \leq &
\frac{A(k)\left(\frac{k-1}{k}\right)^{\binom{n-2\ell}{r-\ell}}}{1-
A(k)\left(\frac{k-1}{k}\right)^{\binom{n-2\ell}{r-\ell}}} \, \s(H',k,\ell)
\nonumber \\
&\leq& 2A(k)\left(\frac{k-1}{k}\right)^{\binom{n-2\ell}{r-\ell}}
\kappa(H',k,\ell) \nonumber \\
& \leq& 2A(k)\left(\frac{k-1}{k}\right)^{\binom{n-2\ell}{r-\ell}}
\left(N(k)k^{\binom{\ell c(k)}{\ell+1}\binom{n-\ell-1}{r-\ell-1}}D(k)^{\binom{n-
\ell}{r-\ell}}\right) \nonumber \\
&=& K \left(\frac{k-1}{k}\right)^{\binom{n-2\ell}{r-\ell}}k^{\binom{\ell c(k)
}{\ell+1}\binom{n-\ell-1}{r-\ell-1}}D(k)^{\binom{n-\ell}{r-\ell}},
\end{eqnarray}
where $K = 2A(k)N(k)$ is independent of $n$ and,
in the second to last step, part (ii) of Theorem \ref{T1} is applied.
Note that, for $n$ sufficiently large, our choice of $\xi$ leads to
\begin{eqnarray}\label{tapsi12}
&& K \left(\frac{k-1}{k}\right)^{\binom{n-2\ell}{r-\ell}}k^{\binom{\ell c(k)
}{\ell+1}\binom{n-\ell-1}{r-\ell-1}}D(k)^{\binom{n-\ell}{r-\ell}} \nonumber\\
&=& K D(k)^{\binom{n-\ell}{r-\ell}\left(1-\left(\prod_{a=1}^{r-\ell}\frac{n-\ell-r+a}{n-2\ell+a}\right) \log_{D(k)}\frac{k}{k-1}+\frac{r-\ell}{n-\ell}\binom{c(k)\ell}{\ell+1}\log_{D(k)}k\right)} \nonumber \\
&\leq& D(k)^{(1-2\xi)\binom{n-\ell}{r-\ell}},
\end{eqnarray}
since
$$\lim_{n \rightarrow \infty} \prod_{a=1}^{r -
\ell}\frac{n-\ell-r+a}{n-2 \ell +a} = 1,$$
thus, with (\ref{tapsi11}) and (\ref{tapsi12}) we infer that
\begin{eqnarray} \label{tapsi10b}
A(k) \left(\frac{k-1}{k}\right)^{\binom{n-2\ell}{r-\ell}}
\sum_{s \in \mathcal{S}(k)} \sum_{P' \in \mathcal{P}_s^{'}}
|\SC(H',P',k,\ell)| &\leq& D(k)^{(1-2\xi)\binom{n-\ell}{r-\ell}}.
\end{eqnarray}

Combining (\ref{tapsi10a}) and
(\ref{tapsi10b}), we obtain, for $n$ sufficiently large, and fixed $k \geq 5$
\begin{eqnarray}\label{mona4}
 \s(H',k,\ell)-\s(H,k,\ell)
&\geq&  \frac{1}{k^2-15}
D(k)^{(1-\xi)\binom{n-\ell}{r-\ell}}-D(k)^{(1-2\xi)\binom{n-\ell}{r-\ell}}
\nonumber \\
&\geq& \delta D(k)^{(1-\xi)\binom{n-\ell}{r-\ell}},
\end{eqnarray}
for a  constant $\delta > 0$ and $\xi < \gamma$. This
proves  (\ref{e301}) and hence Lemma~\ref{L4}.
\end{proof}

An important feature of the proof of Lemma \ref{L4} is that it can be used to show more: $(C,r)$-complete hypergraphs whose cover is not of the form prescribed in Definition~\ref{optimal_hyper} are ``far'' from being optimal.  
\begin{remark}\label{mona3}
If $H_{C,r}(n)$ is $(C,r)$-complete, but $C$ is not of the form prescribed in Definition~\ref{optimal_hyper} for $k\geq 5$, then there exist $\delta>0$ and $n_0>0$ such that, for $n>n_0$,
$$\kappa(H_{C,r}(n),k,\ell)<(1-\delta)\KC(n,r,k,\ell).$$

Indeed, let $H_{C',r}(n)$ be an extremal hypergraph obtained by modifying the cover $C$ inductively, as in the proof of Lemma \ref{L4}, until the union of any two elements in the cover is larger than $r$, or they are disjoint.
By symmetry, we have that $|\SC(H_{C',r}(n),P,k,\ell)|$ is the same for
every optimal solution $s \in \mathcal{S}(k)$ and every $P \in \mathcal{P}_s$. In particular, Lemmas \ref{L5} and \ref{L6} imply
$$|\SC(H_{C',r}(n),P,k,\ell)| \geq \frac{1}{N(k)}\left(\kappa(H_{C',r}(n),
k,\ell)-D(k)^{\binom{n-N}{r-\ell}(1-\gamma)}\right),$$
where $N(k)$ is given in Definition \ref{defCND} and
$\gamma$ is the positive constant defined in (\ref{otto3}). Moreover, as $\kappa(H_{C',r}(n),k,\ell) \geq D(k)^{\binom{n-\ell c(k)}{r-\ell }}$ by (\ref{tapsi6}), we have
$$\lim_{n \rightarrow \infty}
\frac{D(k)^{\binom{n-N}{r-\ell}(1-\gamma)}}{\kappa(H_{C',r}(n),k,\ell)}
\leq \lim_{n \rightarrow \infty} D(k)^{(1-\gamma)\binom{n-N}{r-\ell}
-\binom{n-\ell c(k)}{r-\ell )} }=0,$$
hence, for $n$ sufficiently large,
$|\SC(H_{C',r}(n),P,k,\ell)| \geq \frac{1}{2 N(k)}\kappa(H_{C',r}(n),k,\ell).$
With this, for $n$ sufficiently large,
(\ref{tapsi9}) may be modified to
$$\sum_{s \in \mathcal{S}(k)} \sum_{P' \in \mathcal{P}_s^{'}}
|\SC(H',P',k,\ell)|
- \sum_{s \in \mathcal{S}(k)} \sum_{P' \in \mathcal{P}_s^{'}}
|\SC(H,P',k,\ell)| \geq \frac{1}{2(k^2-15)N(k)}\kappa(H_{C',r}(n),k,\ell),$$
so that equation (\ref{mona4}) becomes
$$\s(H_{C',r}(n),k,\ell)-\s(H_{C,r}(n),k,\ell) \geq
  \frac{1}{4(k^2-15)N(k)}\kappa(H_{C',r}(n),k,\ell),$$
and,
as a consequence,
\begin{equation*}
\begin{split}
\kappa(H_{C,r}(n),k,\ell) &\leq \s(H_{C,r}(n),k,\ell)+D(k)^{\binom{n-N}{r-\ell}(1-\gamma)}\\
&\leq \left(1-\frac{1}{8(k^2-15)N(k)}\right) \kappa(H_{C',r}(n),k,\ell)\\
& \leq (1-\delta)\KC(n,r,k,\ell)
\end{split}
\end{equation*}
for any $\delta \leq \frac{1}{8(k^2-15)N(k)}$, concluding our claim.
\end{remark}

We now finish the proof of Theorem \ref{T2} in the case
$k \equiv 1~(\bmod \, 3)$, $k \geq 5 $. Recall that we already know that an extremal hypergraph has the structure described in the statement of the theorem, but
we need to determine whether the extremal hypergraph
has minimum cover size $c(k)$ or $c(k)-1$. Using Lemmas \ref{L5} and \ref{L6}, it suffices to show that, if the cover has size $c(k)$, the number of star colorings of the corresponding hypergraph is substantially larger than when the cover has size $c(k)-1$. Actually, we do not count the number of star-colorings exactly, but determine it asymptotically through the sum of the numbers of colorings given in the statement of Lemma~ \ref{L6}. For brevity, we shall drop the reference to asymptotics, and just write, that we are counting star colorings.

In the following we distinguish two cases according to
the relation of $r$ and $2\ell-1$.

\subsection{The Case  $r  <  2 \ell-1$}\label{case1}

Let $H^{\ast}_0=H_{C,r}(n)$ be a $(C,r)$-complete hypergraph
on $n$ vertices
with $\ell$-cover $C=\{t_1,\ldots,t_{c(k)}\}$ such that $|t_i \cup t_j| > r$,
for every $i,j \in [ c(k)]$, $i \neq j$.
Let $H^{\ast}_1=H_{C',r}(n)$ be the analogous hypergraph for the
$\ell$-cover $C'=\{t'_1,\ldots,t'_{c(k)-1}\}$ with the same property.
Note that the requirement $r  <  2 \ell-1$
implies that each hyperedge in both $H^{\ast}_0$
and $H^{\ast}_1$ is covered by exactly one element of the cover.
In particular, given any optimal solution $s \in \mathcal{S}_0 \subset\mathcal{S}(k)$, where $\mathcal{S}_0$ contains all optimal solutions with two 2's, and any partition $P \in \mathcal{P}_s$, we must have
\begin{equation}\label{eqQL}
|\SC(H^{\ast}_0,P,k,\ell)|=3^{(c(k)-2)\binom{n-\ell}{r-\ell}}2^{2\binom{n-\ell}{r-\ell}},
\end{equation}
since each set in the $\ell$-cover covers exactly $\binom{n-\ell}{r-\ell}$ hyperedges
and no hyperedge is covered more than once. Analogously, for an optimal solution $s'\in \mathcal{S}_1 \subset\mathcal{S}(k)$, where $\mathcal{S}_1$
contains all optimal solutions with one $4$,
and any partition $P' \in \mathcal{P}_{s'}$, we have
$$|\SC(H^{\ast}_1,P',k,\ell)|=3^{(c(k)-2)\binom{n-\ell}{r-\ell}}4^{\binom{n-\ell}{r-\ell}}=|\SC(H^{\ast}_0,P,k,\ell)|.$$
Moreover, for $s \in \mathcal{S}_0$, we have
\begin{eqnarray} \label{eq501}
|\mathcal{S}_0||\mathcal{P}_s|&=&\binom{c(k)}{2}\frac{k!}{2 \cdot 2 \cdot
(3!)^{c(k)-2}
},
\end{eqnarray}
while, for $s' \in \mathcal{S}_1$,
\begin{eqnarray} \label{eq502}
|\mathcal{S}_1||\mathcal{P}_{s'}|&=& (c(k)-1)\frac{k!}{4! \cdot
(3!)^{c(k)-2}},
\end{eqnarray}
so that, by Lemma \ref{L5}, for $n$ sufficiently large,
\begin{eqnarray}\label{mona5}
\frac{\kappa(H^{\ast}_0,k,\ell)}{\kappa(H^{\ast}_1,k,\ell)}&>&
\frac{\s(H^{\ast}_0,k,\ell)}{2\s(H^{\ast}_1,k,\ell)}
\geq \frac{|\mathcal{S}_0||\mathcal{P}_s|}{2|\mathcal{S}_1||\mathcal{P}_{s'}|} = \frac{3c(k)}{2}>1,
\end{eqnarray}
implying that the extremal hypergraphs have $\ell$-cover of size $c(k)$.

\subsection{The Case  $r \geq 2 \ell-1 $}\label{case2}
Let $H^{\ast}_0$ and $H^{\ast}_1$ be the complete
$n$-vertex hypergraphs with minimum
$\ell$-covers of size $c(k)$ and $c(k)-1$, respectively,
where the elements in each of the two covers are mutually disjoint.
As in the previous case,
we can show that, regardless of the solution
$s \in \mathcal{S}_0$ and the partition $P \in \mathcal{P}_s$ chosen,
the value of $|\SC(H^{\ast}_0,P,k,\ell)|$ is the same, since any mapping of the
vertices of the hypergraph that interchanges two sets in the $\ell$-covers
but keeps the remaining vertices intact is an isomorphism. The same is
true for optimal solutions $s' \in \mathcal{S}_1$ and partitions
$P \in \mathcal{P}_{s'}$.

Since $k \geq 5$ we have  $c=c(k) \geq 3$.

Consider the $r$-uniform hypergraph $H_0^{\ast}=H_{C,r}(n) =(V,E)$
on $n$ vertices and with $\ell$-cover
  $ C= \{t_1, \ldots , t_{c}\}$,
for pairwise disjoint
$\ell$-subsets of $V$.
We determine the number of star colorings of $H_0^{\ast}$.
Let  $s \in \mathcal{S}_0 \subset\mathcal{S}(k)$
be an optimal solution,
where $\mathcal{S}_0$ contains all optimal solutions with two $2$'s,
and let  $P \in \mathcal{P}_s$ be any partition.
Assume that the $\ell$-sets $t_{c-1}$ and $t_c$ correspond to the two $2$'s.
Let $X = \{ t_1, \ldots , t_{c-2}\}$.

Any hyperedge $e$, which contains both $\ell$-sets  $t_{c-1}$ and $t_{c}$,
and with  $[e]^{\ell}
\cap \{t_1, \ldots , t_{c-2 } \} = \{ t_i \; | \; i \in I \}$
can be colored with $(4 + |I|)$ colors, and for a fixed set
$I$ the number of these
hyperedges is
\begin{eqnarray} \label{a737}
A(|I|) &=& \sum_{i=0}^{c-2-|I|} (-1)^i \binom{ n - \ell (|I| +2 +i)}{r -
\ell (|I| + 2 +i)} \binom{c-2-|I|}{i},
\end{eqnarray}
which follows by inclusion-exclusion, as $  \binom{ n - \ell (|I| +2 +i)}{r -
\ell (|I| + 2 +i)} \binom{c-2-|I|}{i}$ counts the number of $r$-sets,
which contain the $\ell$-sets $t_{c-1}$, $t_c$, and $t_j$, $j \in I$,
as well as $i$ further
$\ell$-sets from the $\ell$-cover $C$.

Any hyperedge $e$, which contains exactly one of the $\ell$-sets
  $t_{c-1}$ or $
t_{c}$, say $t_{c}$, and with $[e]^{\ell}
\cap \{t_1, \ldots , t_{c-2 } \} = \{ t_j \; | \; j \in J \}$
can be colored with $( 2 + 3 |J| )$ colors, and for each of the $\ell$-sets
$t_{c-1}$ and $t_{c}$, and, again by inclusion-exclusion,
for a fixed set $J$ the number of these
hyperedges is
\begin{eqnarray} \label{b737}
B(|J|) &=& \sum_{i=0}^{c-1-|J|} (-1)^i \binom{ n - \ell (|J| +1 +i)}{r -
\ell (|J| + 1 +i)} \binom{c-1-|J|}{i}.
\end{eqnarray}

Any hyperedge $e$ with $[e]^{\ell} \cap \{ t_{c-1}, t_{c} \} = \emptyset$
and with $[e]^{\ell}
\cap \{t_1, \ldots , t_{c-2} \} = \{ t_k \; | \; k \in K \}$
can be colored with $3 |K| $ colors, and
for a
fixed set $K$ the number of these hyperedges is
\begin{eqnarray} \label{c737}
C(|K|) &=& \sum_{i=0}^{c-|K|} (-1)^i \binom{ n - \ell (|K|  +i)}{r -
\ell (|K|  +i)} \binom{c-|K|}{i}.
\end{eqnarray}

Let $q = \lfloor r/\ell \rfloor$.
Then, given the partition $P$ the  number
of  star colorings
of $H_0^{\ast}$ is
\begin{eqnarray} \label{neweq601}
&& \s(H_0^{\ast},P,k,\ell) \nonumber \\
&=& \left( \prod_{x=0}^{\min (c-2,q-2)} \prod_{I \in
\binom{X
}{x}} (4 +3x)^{
A(x)} \right) \cdot
\left( \prod_{y=0}^{\min (c-2, q-1)} \prod_{J \in \binom{X
}{y}}
( 2 + 3y)^{B(y)} \right)^2 \cdot \nonumber \\
&& \cdot \left(
\prod_{z=1}^{\min (c-2,q)} \prod_{K \in \binom{X
}{z}} (  3z)^{
C(z)} \right).
\end{eqnarray}

On the other hand,
consider the $r$-uniform hypergraph $H_1^{\ast}= H_{C',r}(n)=(V,E)$
on $n$ vertices   with $\ell$-cover
$C'= \{ t_1, \ldots , t_{c-1} \}$,   $r \geq 2\ell - 1 $,
for pairwise disjoint
$\ell$-subsets of $V$. As above,  we determine the number of
star colorings of $ H_{1}^{\ast}$.
Let $s' \in \mathcal{S}_1 \subset\mathcal{S}(k)$ be an optimal solution,
where $\mathcal{S}_1$ contains all optimal solutions with one $4$,
and let  $P' \in \mathcal{P}_{s'}$ be any partition.
Assume that  the set $t_{c-1}$  corresponds to the one $4$ in $s'$.

Every hyperedge $e$, which contains  the $\ell$-set $t_{c-1}$
and with  $[e]^{\ell}
\cap \{t_1, \ldots , t_{c-2 } \} = \{ t_i \; | \; i \in I \}$
can be colored with $(4 + 3 |I|)$ colors,
and, by inclusion-exclusion,
for a
fixed set $I$ there
are
\begin{eqnarray} \label{d737}
D(|I|) &=& \sum_{i=0}^{c-2-|I|} (-1)^i \binom{ n - \ell (|I|+1  +i)}{r -
\ell (|I|+1  +i)} \binom{c-2-|I|}{i}
\end{eqnarray}
of these hyperedges.

Every hyperedge $e$ with $t_{c-1} \not\subseteq  e $
and with $[e]^{\ell}
\cap \{t_1, \ldots , t_{c-2 } \} = \{ t_k \; | \; k \in K \}$
can be colored with $  3 |K|$ colors, and
again by inclusion-exclusion,
for a
fixed set $K$ there
are
\begin{eqnarray} \label{e737}
E(|K|) &=& \sum_{i=0}^{c-1-|K|} (-1)^i \binom{ n - \ell (|K|  +i)}{r -
\ell (|K|  +i)} \binom{c-1-|K|}{i}
\end{eqnarray}
of these hyperedges.

Thus,  given the partition $P'$,
 the  number
of these star colorings
of $H_1^{\ast}$ is
\begin{eqnarray} \label{neweq602}
\s(H_1^{\ast}, P',k,\ell) &=& \left(
\prod_{x=0}^{\min ( c-2, q-1)} \prod_{I \in \binom{X}{x}} (4 +3x)^{
D(x)} \right)
\left( \prod_{z=1}^{\min ( c-2,q )} \prod_{K \in \binom{X } {z}} (  3 z)^{
E(z)} \right) .
\end{eqnarray}

To finish the proof, it suffices to show that $\s(H_0^{\ast}, P,k,\ell)$ is at least as big as $\s(H_1^{\ast}, P',k,\ell)$, from which we may derive as in (\ref{mona5}) that $\kappa(H_0^{\ast},k,\ell)>\kappa(H_1^{\ast},k,\ell)$. However, with the exception of the case $r = 2\ell - 1$
where we have $\s(H_0^{\ast}, P,k,\ell) = \s(H_1^{\ast}, P',k,\ell)$, the proof of this result involves calculations of reasonable length and is included in the Appendix.

This concludes the proof of Lemma \ref{L4}, and
finishes the proof of Theorem \ref{T2}.  \end{proof}

\subsection{The case $k=4$ revisited}

To finish this section, we address the case $k=4$. First, we give a proof of Theorem \ref{T2_4}. We then provide a more precise result, which describes precisely the extremal hypergraphs in this case.

\begin{proof}[Proof of Theorem \ref{T2_4}] By the first part of the proof of Theorem \ref{T2} (see Lemma \ref{124lemmmanew}), we already know that, for $n$ sufficiently large, an extremal
 hypergraph $H^{\ast}$ on $n$ vertices is of the form $H_{C,r}(n)$,
for some set $C$ of $\ell$-subsets of $[n]$. However, as we are in the case $k \equiv 1~(\bmod \, 3)$, our restriction
is $|C| \in \{1,2\}$. We want to show that two is the correct size of $C$.

Consider the $r$-uniform hypergraph $H_0^{\ast}=H_{C,r}(n)=(V_0,E_0)$
on $n$ vertices and with $\ell$-cover  $C= \{t_1,  t_{2}\}$
for $\ell$-subsets of $V_0$ with $|t_1 \cap t_2| = y$.

There are $\binom{4}{2}$ possibilities to distribute the $4$ colors in sets of size $2$ to the $\ell$-subsets $t_1$ and $t_2$.
The number of $r$-subsets of $V_0$ containing the set $t_1$ and not $t_2$
is  $\binom{n-\ell }{r - \ell} - \binom{n - 2\ell + y}{r - 2 \ell +y}$, and vice versa.
The number of $r$-subsets of $V_0$ containing both sets $t_1$ and $t_2$ is
$\binom{n-2\ell + y }{r - 2\ell + y} $, and these can be colored with $4$ colors.
Hence,  the number of star colorings  of $H_0^{\ast}$ is
\begin{eqnarray} \label{klm2}
\binom{4}{2}  2^{2\left( \binom{n-\ell }{r - \ell} - \binom{n - 2\ell + y}{r
- 2 \ell +y}
\right)}  4^{\binom{n - 2\ell + y}{r - 2 \ell +y}}
&=& 6 \cdot  4^{ \binom{n-\ell  }{r - \ell}},
\end{eqnarray}
and is independent of the intersection of the sets
in the $\ell$-cover.

On the other hand, consider the $r$-uniform hypergraph
$H_1^{\ast}=H_{C,r}(n) =(V_1,E_1)$
on $n$ vertices and with $\ell$-cover
$ C= \{t_1\}$
for
an $\ell$-subset of $V_1$.
Every hyperedge can be colored with $4$ colors, hence the number of colorings
of $H_1^{\ast}$ is
$$
4^{\binom{n - \ell}{r - \ell}}.
$$

Using (\ref{klm2}), and by
Lemma \ref{L5} we have, for $n$ sufficiently large, that
\begin{eqnarray*}
\frac{\kappa(H^{\ast}_0,k,\ell)}{\kappa(H^{\ast}_1,k,\ell)}
&>& \frac{\s(H^{\ast}_0,k,1)}{2\s(H^{\ast}_1,k,1
)}
=
\frac{6}{2} > 1,
\end{eqnarray*}
implying that for $k=4$
the extremal hypergraphs have $\ell$-cover of size $c(4)=2$.
\end{proof}

In light of Theorems \ref{T2_4} and \ref{T2}, the hypergraph $H_{n,r,k,\ell}$ given in Definition \ref{def_extremal} is the unique extremal hypergraph for $\KC(n,r,k,\ell)$ whenever $n$ is sufficiently large and we are in one of the cases $k \leq 3$, $k=4$ and $\ell=1$, or $k \geq 5$ and $r \geq 2 \ell-1$. Furthermore, even in the cases for which uniqueness is not obtained, the hypergraph $H_{n,r,k,\ell}$ is listed as candidate for extremal hypergraph. This naturally raises the question of whether $H_{n,r,k,\ell}$ is always extremal for sufficiently large $n$. However, this is not true, as implied by the following result in the case $k=4$. Its proof relies on a more careful counting of the number of Kneser colorings in the ``candidate extremal hypergraphs'' of Theorem \ref{T2_4}.

\begin{theorem} \label{theo1_inter}
Let $r$ and $\ell \geq 2$ be integers with $\ell < r$. Then, there
exists $n_0$, such that for all $n \geq n_0$,
the extremal hypergraph for $\mathcal{P}_{n,r,4,\ell}$ is the hypergraph
$H_{C,r}(n)$
where $C=\{t_1,t_2\}$ is an $\ell$-cover such that $|t_1 \cap t_2|=\ell-1$.  
\end{theorem}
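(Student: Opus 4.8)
The plan is to compare, among all covers $C=\{t_1,t_2\}$ with $|t_1\cap t_2|=y$ for $y\in\{0,1,\dots,\ell-1\}$, the exact number $\kappa(H_{C,r}(n),4,\ell)$ of Kneser colorings, and to show that it is strictly maximized when $y=\ell-1$. By Theorem~\ref{T2_4} we already know the extremal hypergraph is of the form $H_{C,r}(n)$ with $|C|=2$, and by~(\ref{klm2}) the number of \emph{star} colorings is $6\cdot 4^{\binom{n-\ell}{r-\ell}}$, independent of $y$. So the star-coloring count is useless for distinguishing the cases, and the whole theorem rests on the lower-order terms: one must count the \emph{non-star} Kneser colorings precisely enough to see a $y$-dependence. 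The first step is therefore to set up a clean inclusion-exclusion (or direct) enumeration of all $(4,\ell)$-colorings of $H_{C,r}(n)$, organizing hyperedges by which of $t_1,t_2$ they contain: the ``core'' hyperedges containing both $t_1$ and $t_2$ (there are $\binom{n-2\ell+y}{r-2\ell+y}$ of them, which \emph{grows with $y$}), and the two ``arms'' containing exactly one $t_i$.

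The key structural input is Lemma~\ref{L1}/Lemma~\ref{L2}-type reasoning specialized to $k=4$: in any Kneser coloring, at most one color can fail to be ``substantial,'' and a substantial color forces all its hyperedges through one cover element. With two cover elements and four colors, the generic coloring splits the colors $2{+}2$ between $t_1$ and $t_2$; the non-star colorings are exactly those where one color class is small (at most $L$ hyperedges, in the notation of~(\ref{defL})) and can ``float'' — in particular it may be placed on core hyperedges or on a bounded number of arm hyperedges that happen to be $\ell$-intersecting among themselves. I would enumerate these exceptional colorings according to the color that floats and the small set of hyperedges it occupies, and show the count has the shape $6\cdot 4^{\binom{n-\ell}{r-\ell}} + (\text{main correction}) + (\text{negligible})$, where the main correction is a sum over $\ell$-intersecting families living near the core and is visibly an \emph{increasing} function of the number of core hyperedges, hence of $y$. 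Because $\binom{n-2\ell+y}{r-2\ell+y}$ is strictly increasing in $y$ for the relevant range, $y=\ell-1$ wins; and one needs $\ell\geq 2$ precisely so that $y=\ell-1\geq 1$ is an admissible (nonzero) overlap, which is why the hypothesis $\ell\geq 2$ appears.

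The main obstacle, and the place where real work is needed, is making the correction term both \emph{explicit enough} to exhibit monotonicity in $y$ and \emph{tight enough} to dominate the genuinely negligible error: the floating color can sit not only on core hyperedges but on short $\ell$-intersecting chains of arm hyperedges, and one must argue that the dominant contribution comes from the core (this is where completeness of $H_{C,r}(n)$ and the fact that a core hyperedge is $\ell$-intersecting with \emph{every} hyperedge are used), while all other placements contribute a lower-order polynomial-times-$4^{\binom{n-\ell}{r-\ell}-\binom{n-2\ell+y}{r-2\ell+y}}$-type term that is independent of $y$ up to the leading order. I would isolate this by the same $L$-bound machinery as in Lemma~\ref{L3}: show that colorings in which the floating color uses more than $L$ arm hyperedges are forced to be star colorings after all, so the exceptional family is controlled, and then carry out the bounded-size enumeration exactly. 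Once the $y$-dependence of the leading correction is pinned down as strictly monotone, comparing $y=\ell-1$ against every $y<\ell-1$ and against the whole remaining error is a routine (if lengthy) estimate, which I would relegate to the Appendix in the style already used for the $r\geq 2\ell-1$ case in Theorem~\ref{T2}.
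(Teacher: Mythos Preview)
Your overall strategy---compare $\kappa(H_{C,r}(n),4,\ell)$ across the values $y=|t_1\cap t_2|\in\{0,\dots,\ell-1\}$, note that the $(2,2)$ star colorings contribute the $y$-independent leading term $6\cdot 4^{\binom{n-\ell}{r-\ell}}$, and then locate the $y$-dependence in the lower-order terms---is exactly the right one, and your intuition that the $y$-dependence enters through the core count $\binom{n-2\ell+y}{r-2\ell+y}$ is correct. However, there is a genuine gap in your identification of the second-order term.

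You claim that ``the non-star colorings are exactly those where one color class is small (at most $L$ hyperedges)''. This is false. If all four colors are substantial, then by Lemma~\ref{L1} each is concentrated on one cover element, but the resulting distribution of colors to $\{t_1,t_2\}$ need not be $(2,2)$: it can equally well be $(3,1)$ or $(1,3)$. In a $(3,1)$ coloring the single color assigned to $t_2$ is used on \emph{every} hyperedge of the $t_2$-arm, hence on roughly $\binom{n-\ell}{r-\ell}$ hyperedges; no color class is small. These $(3,1)$ colorings are precisely the dominant second-order term, contributing on the order of
\[
3^{\binom{n-\ell}{r-\ell}-\binom{n-2\ell+y}{r-2\ell+y}}\cdot 4^{\binom{n-2\ell+y}{r-2\ell+y}}
= 3^{\binom{n-\ell}{r-\ell}}\Bigl(\tfrac{4}{3}\Bigr)^{\binom{n-2\ell+y}{r-2\ell+y}},
\]
which is manifestly increasing in $y$ through the core exponent. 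Your ``floating small color'' colorings, by contrast, have three substantial colors distributed $(2,1)$ plus one small class, and contribute only on the order of $2^{\binom{n-\ell}{r-\ell}}\cdot 2^{\binom{n-2\ell+y}{r-2\ell+y}}$, which is negligible compared to the $(3,1)$ term. So your proposed enumeration would miss the term that actually decides the comparison.

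The paper's fix is to introduce \emph{generalized} star colorings (allowing any distribution $(s_1,s_2)$ with $s_1+s_2=4$, not just the optimal $(2,2)$), compute their total $S(y)$ exactly via~(\ref{FGH2a})--(\ref{FGH2}), read off that $S(y)$ is increasing in $y$ for large $n$, and then show separately (Lemma~\ref{lemmacap} and~(\ref{FGH9})) that the genuinely non-generalized-star colorings---those where some color has hyperedges in both arms without covering $t_1\cup t_2$---are bounded by roughly $2^{\binom{n-\ell}{r-\ell}}\cdot 2^{O(n^{r-\ell-1})}$, which is dominated by the $(3,1)$ contribution $S(\ell-1)-6\cdot 4^{\binom{n-\ell}{r-\ell}}$. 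Once you insert the $(3,1)$ layer between the $(2,2)$ star colorings and the ``small floating color'' colorings, your outline goes through essentially as the paper's proof does.
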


We use the following notation.
\begin{definition}
Let $H = (V, E)$ be a hypergraph with $\ell$-cover $C = \{ t_1, \ldots,
t_c \}$. A \emph{generalized star coloring} of $H$ is a Kneser coloring such that, for every color $\sigma$, all the hyperedges of $H$ with color $\sigma$ contain some fixed element $t_i=t_i(\sigma)$ of the cover.  
A Kneser coloring of $H$ that is not a generalized star coloring is called
a \emph{non-star coloring}.
\end{definition}

Note that a generalized star coloring is just a relaxation of star colorings in that the number of colors assigned to the cover sets need not be given by an optimal solution to (\ref{eqL1}).

\begin{proof}
Let $H^{\ast} =(V,E)$ be a complete, $r$-uniform hypergraph
on $n$ vertices with
 $\ell$-cover $C = \{ t_1, t_2 \}$ where
$|t_1 \cap t_2| = y \leq \ell -1$. To compute the number of generalized star colorings, observe that we may either assign two colors to each cover element, or three colors to one cover element and one color to the other. The former can be done in $\binom{4}{2}$ ways and the latter in $2 \binom{4}{3}$ ways. The number $S(y)$ of
generalized star colorings of $H^{\ast}$ is therefore
given by
\begin{eqnarray}
S(y) &=& \binom{4}{2}  2^{2  (\binom{n - \ell}{r - \ell}
- \binom{n-2\ell + y}{r - 2\ell + y})}
4^{\binom{n-2\ell + y}{r - 2\ell + y}} +
\label{FGH2a}\\
&&  + 2  \binom{4}{3}  
\left( 3^{\binom{n - \ell}{r - \ell}- \binom{n-2\ell + y}{r - 2\ell + y}}
- 3 \cdot  2^{\binom{n - \ell}{r - \ell}- \binom{n-2\ell + y}{r - 2\ell + y}}
+3 \right)
4^{\binom{n-2\ell + y}{r - 2\ell + y}}
\nonumber \\
&=& 6 \cdot 4^{\binom{n - \ell}{r - \ell}} + \label{FGH2} \\
&&  + 8 \cdot
 3^{\binom{n - \ell}{r - \ell}}
 \left( \frac{4}{3} \right)^{\binom{n-2\ell + y}{r - 2\ell + y}}
 \left( 1 - 3  
\left( \frac{2}{3} \right)^{\binom{n - \ell}{r - \ell}-
\binom{n-2\ell + y}{r - 2\ell + y}} + 3    \left( \frac{1}{3}
\right)^{\binom{n - \ell}{r - \ell}- \binom{n-2\ell + y}{r - 2\ell + y}}
\right),
\nonumber \end{eqnarray}
where
the powers of $4$ arise from the $r$-sets that contain $t_1 \cup t_2$,
which can be colored by any of the $4$ colors. As usual, we use $\binom{n}{i} = 0$
for  integers $i<0$.
The term
$
3^{\binom{n - \ell}{r - \ell}- \binom{n-2\ell + y}{r - 2\ell + y}}
- 3 \cdot  2^{\binom{n - \ell}{r - \ell}- \binom{n-2\ell + y}{r - 2\ell + y}}
+3
$
counts the number of $3$-colorings of all $r$-subsets that contain $t_1$,
but not $t_2$, or vice versa,
for which all three colors are used, as the other
colorings are already counted in (\ref{FGH2a}) by the term
$\binom{4}{2}  2^{2  (\binom{n - \ell}{r - \ell}
- \binom{n-2\ell + y}{r - 2\ell + y})}$.

Then, for $n$ sufficiently large, we have $S(y-1) \leq S(y)$, hence $S(y)$  
is maximal for $y = \ell - 1$. By Pascal's identity $\binom{n}{k}=\binom{n-1}{k}+\binom{n-1}{k-1}$, we infer, for $n$ sufficiently large, that
\begin{eqnarray}   \label{FGH3}
S(\ell - 1) &=& 6 \cdot   4^{\binom{n - \ell}{r - \ell}} + 8 \cdot
 3^{\binom{n - \ell-1}{r - \ell}}
4^{\binom{n-\ell - 1}{r - \ell -1}}
\cdot \left( 1 - 3    \left( \frac{2}{3} \right)^{\binom{n - \ell-1}{r - \ell}}
+  3   \left( \frac{1}{3}
\right)^{\binom{n - \ell-1}{r - \ell}} \right) \nonumber \\
 &\geq&  6 \cdot   4^{\binom{n - \ell}{r - \ell}} + 4 \cdot
 3^{\binom{n - \ell-1}{r - \ell}}  4^{\binom{n-\ell - 1}{r -
 \ell -1}}.
\end{eqnarray}

We now show that, if $|t_1 \cap t_2|< \ell-1$, then $H^{\ast}$ is not extremal. To this end, we find an upper bound on the number of non-star
colorings of $H^{\ast}$. For such a coloring $\Delta$, there exists at least one pair $(a,b)$ of
$r$-sets $a, b \in E$ of the same color such that $|a \cap b| \geq \ell$ with $t_1 \subset a$ and $t_2 \subset b$, but $t_1 \cup t_2 \not\subseteq a$ and $t_1 \cup t_2 \not\subseteq b$.
 Let $|a \cap (t_2 \setminus t_1)| =q$
and $|b \cap (t_1 \setminus  t_2)
 | = p$,  where we may assume that $p \leq q$ by
symmetry. Thus
 \begin{eqnarray} \label{FGH4}
p + y \leq \ell - 1 \; \; \; &\mbox{and} &\; \; \; q + y \leq \ell -1.
\end{eqnarray}

Let
\begin{eqnarray*}
{\mathcal F}_1(b) &=& \{ e \in E \, | \, t_1 \subset e, t_2
\not\subset e, \Delta (e) = \Delta (b)
\} \label{heu1} \\
{\mathcal F}_2 (a) &=& \{ e \in E \, | \, t_2 \subset e,
t_1 \not\subset e, \Delta (e) = \Delta (a)
\}. \label{heu2}
\end{eqnarray*}

\begin{lemma}  \label{lemmacap}
There exist constants $C_1, C_2 > 0$ and a positive integer $n_0$ such that, for all $n \geq n_0$, we have
\begin{eqnarray}
| {\mathcal F}_1(b)
 | &\leq&  C_1 \cdot  n^{r-2\ell +p+y}  \hspace{1cm}
\mbox{and} \hspace{1cm} | {\mathcal F}_2(a)
| \leq  C_2 \cdot  n^{r-2\ell +q+y}.  \label{laber2}
\end{eqnarray}
\end{lemma}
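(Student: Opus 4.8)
The plan is to prove \eqref{laber2} by a direct forced-coordinates count, exploiting the symmetry that interchanges the two assertions (swap $t_1 \leftrightarrow t_2$, $a \leftrightarrow b$, $p \leftrightarrow q$); thus it suffices to bound $|\mathcal{F}_1(b)|$. The only properties of a hyperedge $e \in \mathcal{F}_1(b)$ the argument uses are that $t_1 \subseteq e$ and that $|e \cap b| \geq \ell$, the latter holding because $\Delta(e) = \Delta(b)$ and $\Delta$ is a Kneser coloring, so $e$ and $b$ lie in a common $\ell$-intersecting color class. I would then pin down $e \cap b$: since $t_1 \cap t_2 \subseteq t_2 \subseteq b$ and $|b \cap (t_1 \setminus t_2)| = p$, we have $|b \cap t_1| = y + p$, and this set lies in $e \cap b$ because $t_1 \subseteq e$; hence $|e \cap (b \setminus t_1)| \geq \ell - (y + p) \geq 1$ by \eqref{FGH4}.

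This turns the estimate into a routine count. Writing $S = e \cap (b \setminus t_1)$ and $T = e \setminus (t_1 \cup b)$, every $e \in \mathcal{F}_1(b)$ is recovered as the disjoint union $t_1 \cup S \cup T$, where $S$ is a subset of $b \setminus t_1$ with $|S| \geq \ell - y - p$ and $T \subseteq [n]$ has size $r - \ell - |S|$. The set $b \setminus t_1$ has the fixed size $r - y - p$, so there is only a constant number (depending on $r$ alone) of choices for $S$; for each such $S$, the number of admissible sets $T$ is at most $n^{\,r - \ell - |S|} \leq n^{\,r - 2\ell + p + y}$. Summing over the boundedly many $S$ yields $|\mathcal{F}_1(b)| \leq C_1\, n^{\,r - 2\ell + p + y}$ with $C_1 = C_1(r)$; when the prescribed sizes are incompatible (for instance if $\ell - y - p > r - \ell$, forcing $\mathcal{F}_1(b) = \emptyset$) the bound holds trivially. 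Running the identical argument with $t_1, t_2, a, b, p, q$ permuted gives $|\mathcal{F}_2(a)| \leq C_2\, n^{\,r - 2\ell + q + y}$, which establishes \eqref{laber2}.

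I do not expect a real obstacle here: the proof is entirely bookkeeping. The one point that requires care is that, after peeling off $t_1$ and a minimum-size choice of $S$ from $e$, the genuinely unconstrained part $T$ has size \emph{at most} $r - 2\ell + p + y$, and that enlarging $S$ or landing in a degenerate size regime can only make $\mathcal{F}_1(b)$ smaller, so that the exponent stated in \eqref{laber2} is the one obtained.
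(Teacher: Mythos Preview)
Your proposal is correct and follows essentially the same approach as the paper: both pin down $t_1\subseteq e$, use $|e\cap b|\geq\ell$ from the Kneser condition together with $|b\cap t_1|=p+y$ to force at least $\ell-(p+y)$ further coordinates of $e$ into $b\setminus t_1$, and then bound the remaining $r-(2\ell-p-y)$ free coordinates by $n^{r-2\ell+p+y}$. The paper writes this as the single product $\binom{r-p-y}{\ell-(p+y)}\binom{n-(2\ell-p-y)}{r-(2\ell-p-y)}$, while you sum over the possible sizes of $S=e\cap(b\setminus t_1)$; these are the same count up to harmless overcounting.
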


Notice that  
$
 r \geq 2\ell -p-y$   and $r \geq 2\ell - q - y
$.
Indeed, since $a$ and $b$ intersect in at least $\ell$ elements, we must have $|(a \setminus t_1) \cap (b \setminus t_1)|
\geq \ell - (p + y)$,
hence $a$ must contain at least
$
\ell + (\ell - (p+y)) = 2\ell -p-y
$
elements, which implies that $r \geq 2\ell -p -y$. Similarly we obtain
$ r \geq  2\ell -q - y$.

\begin{proof}
Given $r$-sets $a$ and $b$, there is a constant $C_1 > 0$ such that
\begin{eqnarray*} \label{FGH5}
|{\mathcal F}_{1}(b) | \leq \binom{ r - p - y}{ \ell - (p+y)}
 \binom{ n - (2\ell - p - y)}{r - (2\ell - p - y)}
&\leq& C_1  n^{r-2\ell +p+y},
\end{eqnarray*}
as we can choose $\ell - (p+y)$ elements from the set $b \setminus t_1$ in $
\binom{ r - p - y}{ \ell - (p+y)}$ ways, and the remaining $r - (2\ell - p - y)$
elements in at most $ \binom{ n - (2\ell - p - y)}{r - (2\ell - p - y)}$ ways.
Similarly, the second inequality follows, namely
\begin{eqnarray*} \label{FGH5a}
|{\mathcal F}_{2}(a)| \leq  \binom{ r - q - y}{ \ell - (q+y)}
\cdot \binom{ n - (2\ell - q - y)}{r - (2\ell - q - y)}
&\leq& C_2 \cdot  n^{r-2\ell +q+y}.
\end{eqnarray*}
\end{proof}

Moreover, with the $r$-sets $a$ and $b$ fixed, the subfamilies ${\mathcal G}_1 \subseteq {\mathcal F}_{1}(b)$ and ${\mathcal G}_2 \subseteq  {\mathcal F}_{2}(a)$ may be assigned the same color as  $a$ and $b$, provided that $|a' \cap b'| \geq \ell$ for all $r$-sets $a' \in {\mathcal G}_1$ and $b' \in {\mathcal G}_2$.

Note that the $r$-sets  $a$ and $b$ may be chosen in at most
$\binom{n- \ell }{r - \ell }^2 \leq  n^{2r-2\ell}$ ways, the subfamilies  ${\mathcal G}_1$ and ${\mathcal G}_2$ may be fixed in at most $2^{|{\mathcal F}_{1}(b)| + |{\mathcal F}_{2}(a)|}$ ways and one of the colors may be chosen in four ways. As $p \leq q$ and using Lemma \ref{lemmacap}, we deduce that the total number of choices is at most
\begin{eqnarray} \label{FGH6}
 4   \cdot  n^{2  r - 2  \ell }
2^{|{\mathcal F}_{1}(b)| + |{\mathcal F}_{2}(a)|} &\leq&
4 \cdot  n^{2  r - 2  \ell }  2^{C_1 n^{r-2\ell +p+y}
+ C_2 n^{r-2\ell +q+y}} \nonumber
\\
&\leq& 4  \cdot  n^{2  r - 2  \ell }  2^{2C_2 n^{r-2\ell +q+y}} .
\end{eqnarray}

Having fixed the color of $a$ and $b$ and all $r$-sets in  ${\mathcal G}_1 \cup   {\mathcal G}_2$,
we may use this color only for $r$-sets covering $t_1 \cup t_2$, which appear if $r \geq 2\ell - y$. We use the remaining three colors for a star coloring of the set of  uncolored hyperedges, which can be colored in at most
\begin{eqnarray} \label{FGH7}
&&  
2   \binom{3}{2}
  2^{\binom{n - \ell}{r - \ell} - \binom{n - 2\ell +y}{r-2\ell+y} }
 4^{\binom{n-2\ell + y}{r-2\ell + y}}  
= 6 \cdot 2^{\binom{n - \ell}{r - \ell}
 + \binom{n-2\ell + y}{r-2\ell + y}}
\end{eqnarray}
ways; hence, with (\ref{FGH6}) and (\ref{FGH7}),
for $n$ sufficiently large, the number of such non-star colorings is at most
\begin{eqnarray} \label{FGH7b}
24 \cdot  n^{2  r - 2  \ell }
 2^{2C_2 n^{r-2\ell +q+y}}  2^{\binom{n - \ell}{r - \ell}
+ \binom{n-2\ell + y}{r-2\ell + y}}.
\end{eqnarray}

On the other hand, assume that there exists another pair $(a_1,b_1) \neq (a,b)$ of $r$-sets $a_1, b_1 \in E$  with $|a_1 \cap b_1| \geq \ell$ and
$t_1 \subset a_1$, and $t_2 \subset b_1$, but $t_1 \cup t_2 \not\subseteq a_1$ and  $t_1 \cup t_2 \not\subseteq b_1$,
and with $|C\cap (t_2\setminus t_1) | = q'$ and $|D\cap (t_1 \setminus t_2)| = p'$,
say $p' \leq q'$, where $\Delta (a_1) = \Delta (b_1) \neq \Delta (a)$.
Let  ${\mathcal F}_1(b_1)$ and ${\mathcal F}_2(a_1)$
be defined  as in (\ref{heu1}) and (\ref{heu2}).
By  Lemma \ref{lemmacap} with $p' \leq q'$, for $n$ sufficiently large,
we have that, for some constants $C_1', C_2' > 0$,
\begin{eqnarray} \label{FGH7C}
|{\mathcal F}_1(b_1)| + |{\mathcal F}_2(a_1)|
&\leq & 2^{C_1' n^{r-2\ell +p'+y}}
 + 2^{C_2' n^{r-2\ell +q'+y}} \leq 2^{2C_2' n^{r-2\ell +q'+y}}.
\end{eqnarray}
Combining this with (\ref{FGH6})
and (\ref{FGH7C}) gives us at most
\begin{eqnarray} \label{FGH8}
&& 4
\cdot n^{2r - 2\ell}   2^{2C_2 n^{r-2\ell +q+y}}
\cdot 3 \cdot 2 \cdot n^{2r - 2\ell}  
2^{2C_2' n^{r-2\ell +q'+y}}
 4^{\binom{r - 2\ell + y}{r - 2\ell + y}}
\nonumber
\\
&=&  24  \cdot  n^{4r - 4\ell}  2^{  2C_2 n^{r-2\ell +q+y} +
2 C_2' n^{r-2\ell +q'+y} }  4^{\binom{r - 2\ell + y}{r - 2\ell + y}}
\end{eqnarray}
such non-star colorings, since the two colors not used so far can be taken for those remaining hyperedges not covering $t_1 \cup t_2$ in at most two ways. Indeed, if we use one of the two remaining colors
for another pair $(a_2,b_2)$ of $r$-sets that is distinct
from the pairs $(a,b)$ and $(a_1,b_1)$, where $t_1 \cup t_2 \not\subseteq a_2$ and  $t_1 \cup t_2 \not\subseteq b_2$, then by
Lemma~\ref{lemmacap} with (\ref{FGH4}), for some constant $C' > 0$
this color can be used for at most
$C' \cdot  n^{r - \ell - 1}$ $r$-sets containing $t_1$ but not $t_2$, or
$t_2$ but not $t_1$, respectively. However, this leaves
at least $ \binom{n - \ell}{r - \ell} - C' n^{r - \ell -1}$
uncolored $r$-sets containing $t_1$ or $t_2$,
but not $t_1 \cup t_2$, which cannot be colored properly with a single color.  

Hence, combining (\ref{FGH7b}) and (\ref{FGH8}) with the inequalities $q +y \leq \ell - 1$ and $q' + y \leq \ell - 1$ given in (\ref{FGH4}),
we have that, for some constant $C > 0$, the total number of non-star colorings of $H^{\ast}$ has the upper bound  
\begin{eqnarray} \label{FGH9}
&&  
24  \cdot   n^{2  r - 2  \ell }  
2^{\binom{n - \ell}{r - \ell}
+ \binom{n-2\ell + y}{r-2\ell + y}}
2^{2C_2 n^{r - 2\ell +q+y}}  
 +   24  \cdot n^{4  r - 4  \ell}   2^{2C_2 n^{r-2\ell +q+y} +
2C_2' n^{r-2\ell +q'+y} }  4^{\binom{r - 2\ell + y}{r - 2\ell + y}}
\nonumber \\
&\leq& 24  \cdot 2^{C n^{r - \ell -1}}  \left(  n^{2  r - 2  \ell }
 2^{\binom{n - \ell}{r - \ell}
+ \binom{n-\ell -1}{r-\ell -1}}   +  
n^{4  r - 4  \ell}  
 4^{\binom{r - \ell -1}{r - \ell - 1}} \right).
\end{eqnarray}

For fixed $p$ and $q$, respectively $p'$ and $q'$, the number of possibilities for choosing the intersections
$|a \cap (t_2 \setminus t_1)| = q$ and $|b \cap (t_1 \setminus t_2)| = p$ as well as
$|a_1 \cap (t_2 \setminus t_1)| = q'$ and $|b_1 \cap (t_1 \setminus t_2)| = p'$
is bounded from above by a constant, and affect the upper bound
(\ref{FGH9}) by at most a constant factor. Notice that, for $n$ sufficiently large, the term
\begin{eqnarray*} \label{FGH9a}
&&
 4 \cdot
 3^{\binom{n - \ell -1}{r - \ell }}  
 4^{\binom{n-\ell -1}{r - \ell -1}}
\end{eqnarray*}
in (\ref{FGH3}) is  much larger than the upper
bound in (\ref{FGH9}).
Therefore, the number of Kneser colorings of $H^{\ast}$ is maximized
for $y = \ell - 1$, which finishes the proof of Theorem \ref{theo1_inter}.
\end{proof}

\section{The asymptotic behavior of $\KC(n,r,k,\ell)$}\label{sec_asy}

We now use the knowledge of properties of extremal hypergraphs
obtained in Section~\ref{sec_extremal}
to derive the asymptotic behavior $\KC(n,r,k,\ell)$. First note the following easy observation.
\begin{lemma}\label{Lql}
Let $r \geq 2$, $k \geq 4$ and $1 \leq \ell < r$ be given and let $H=([n],E)$
be a hypergraph as in Defintion~\ref{optimal_hyper}. Let $\mathcal{S}(k)$ be the set of optimal solutions to (\ref{eqL1}) and, given $s \in \mathcal{S}(k)$, consider the set $\mathcal{P}_s$ of all ordered partitions
$P(s)=(P_1,\ldots,P_c)$ of the set $[k]$ of colors such that $|P_i|=s_i$,
for every $i \in [c]$. Then
\begin{eqnarray} \label{star29}
\alpha(n,r,k,\ell)&=&\sum_{s \in \mathcal{S}(k)} \sum_{P \in \mathcal{P}_s}  \prod_{e \in E} \left( \sum_{t_i \subset e} s_i \right)
\end{eqnarray}
is independent of the choice of $H$.
\end{lemma}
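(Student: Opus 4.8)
The plan is to reduce the statement to the single nontrivial case, which is case~(c) of Definition~\ref{optimal_hyper}. First I would observe that $\alpha(n,r,k,\ell)$, as given by~\eqref{star29}, depends only on the isomorphism type of $H$ together with its minimum $\ell$-cover: the two outer summations range over the optimal solutions of~\eqref{eqL1} and over the ordered partitions of $[k]$, neither of which refers to $H$, while the inner product only records which cover elements are contained in which hyperedges. Hence $\alpha$ is invariant under isomorphisms of $H$ preserving the cover, and in cases~(a) and~(b) of Definition~\ref{optimal_hyper}, where $\mathcal{H}_{r,k,\ell}(n)$ consists of a single hypergraph up to isomorphism, there is nothing to prove. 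This leaves the situation $k \geq 5$, $r < 2\ell-1$, $H = H_{C,r}(n)$ with $C = \{t_1,\ldots,t_{c(k)}\}$ and $|t_i \cup t_j| > r$ for all $i \neq j$, for which I would establish the sharper fact that $\alpha(n,r,k,\ell) = N(k)\,D(k)^{\binom{n-\ell}{r-\ell}}$, an expression that does not involve $C$ at all.

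The crucial step is the remark that, since $|t_i \cup t_j| > r$ for $i \neq j$, no $r$-set can contain two distinct elements of $C$ (if $t_i \cup t_j \subseteq e$ then $|t_i \cup t_j| \leq |e| = r$). Therefore each hyperedge $e$ of $H_{C,r}(n)$ contains exactly one cover element $t_{i(e)}$, so $\sum_{t_i \subset e} s_i = s_{i(e)}$ for every vector $s$; moreover, for each fixed $i$ the hyperedges $e$ with $i(e) = i$ are precisely the $r$-subsets of $[n]$ containing $t_i$, of which there are $\binom{n-\ell}{r-\ell}$, and these $c(k)$ classes partition $E$. Since the solutions $s \in \mathcal{S}(k)$ relevant here all have $c(k)$ components, each at least $2$, with product equal to the optimal value $D(k)$ of~\eqref{eqL1} (by Lemma~\ref{L0}), this gives $\prod_{e\in E}\big(\sum_{t_i \subset e} s_i\big) = \prod_{i=1}^{c(k)} s_i^{\binom{n-\ell}{r-\ell}} = D(k)^{\binom{n-\ell}{r-\ell}}$ for every such $s$ and every $P \in \mathcal{P}_s$.

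Plugging this identity into~\eqref{star29}, every summand equals $D(k)^{\binom{n-\ell}{r-\ell}}$, so $\alpha(n,r,k,\ell) = \big(\sum_{s\in\mathcal{S}(k)} |\mathcal{P}_s|\big) D(k)^{\binom{n-\ell}{r-\ell}}$; since $|\mathcal{P}_s|$ is the multinomial coefficient $k!/(s_1!\cdots s_{c(k)}!)$ and the sum of these over the relevant optimal solutions is, by a direct check, the constant $N(k)$ of Definition~\ref{defCND}, the argument is complete. I expect no real obstacle here: the one point needing care is the bookkeeping when $k \equiv 1 \pmod 3$, where one must keep only the optimal solutions of~\eqref{eqL1} with $c(k) = \lceil k/3 \rceil$ components (two $2$'s and the remaining entries $3$) rather than those with one $4$, since only the former are compatible with a cover of size $c(k)$ --- and it is precisely this subfamily whose multinomial coefficients sum to $N(k)$.
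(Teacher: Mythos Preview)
Your proposal is correct and follows essentially the same approach as the paper. The paper likewise disposes of cases (a) and (b) by uniqueness up to isomorphism (for $k=4$ it cites the computation around \eqref{klm2}, though your observation that Definition~\ref{optimal_hyper}(b) already pins down a single isomorphism type suffices), and for case (c) it invokes the same key fact---that $|t_i\cup t_j|>r$ forces every hyperedge to contain exactly one cover element---which you make explicit and then push through to the closed form $N(k)\,D(k)^{\binom{n-\ell}{r-\ell}}$ that the paper records later in Section~\ref{eq_alpha}.
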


\begin{proof} If $k \geq 5$ and $r \geq 2 \ell-1$, this is immediate, since the extremal hypergraphs are uniquely defined up to isomorphism. If $r<2 \ell-1$, the result follows because, by Lemma \ref{L4} in any extremal hypergraph, every hyperedge is covered by exactly one cover element.

If $k=4$, this is a consequence of the proof of Theorem \ref{T2_4}, namely of the discussion preceeding equation (\ref{klm2}). \end{proof}

Now, putting this observation together with Lemmas \ref{L5} and \ref{L6}, we obtain the asymptotic behavior of the function $\KC(n,r,k,\ell)$.
\begin{theorem}\label{T3}
Let $r \geq 2$, $k \geq 4$ and $1 \leq \ell < r$ be given. Then, there exist a function $f_{r,k,\ell}=f_{r,k,\ell}(n)$ and an integer $n_0>0$ such that, for the function $\alpha(n,r,k,\ell)$ defined in Lemma \ref{Lql},  
\begin{itemize}
\item[1.] $|\KC(n,r,k,\ell)-\alpha(n,r,k,\ell)|<f_{r,k,\ell}(n)$ for $n>n_0$;

\item[2.] $\displaystyle{\lim_{n \rightarrow \infty} \frac{f_{r,k,\ell}(n)}{\KC(n,r,k,\ell)}=0.}$
\end{itemize}
In particular, the function $\KC(n,r,k,\ell)$ is asymptotically equal
 to $\alpha(n,r,k,\ell)$.
\end{theorem}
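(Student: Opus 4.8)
The plan is to combine the structural description from Section~\ref{sec_extremal} with the two sandwiching estimates of Lemmas~\ref{L5} and~\ref{L6}, and to take for $f_{r,k,\ell}$ the sum of the two error terms they produce. First I would fix a large $n$ and an extremal hypergraph $H^{\ast}=([n],E)$, so that $\kappa(H^{\ast},k,\ell)=\KC(n,r,k,\ell)$. By Theorem~\ref{T2} (when $k\ge 5$) and Theorem~\ref{T2_4} (when $k=4$), once $n$ is large enough $H^{\ast}$ is a $(C,r)$-complete hypergraph $H_{C,r}(n)$ whose unique minimum $\ell$-cover $C=\{t_1,\ldots,t_{c(k)}\}$ has size exactly $c(k)$; write $N=\bigl|\bigcup_i t_i\bigr|$, a constant with $N\le c(k)\ell$ that is independent of $n$.

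Because the minimum $\ell$-cover of $H^{\ast}$ has size $c(k)$, Lemma~\ref{L5} applies to $H^{\ast}$, and because $H^{\ast}=H_{C,r}(n)$ is $(C,r)$-complete, Lemma~\ref{L6} applies as well. Writing $\alpha_{H^{\ast}}=\sum_{s\in\mathcal{S}(k)}\sum_{P\in\mathcal{P}_s}\prod_{e\in E}\bigl(\sum_{t_i\subset e}s_i\bigr)$, the two lemmas give
\[
\kappa(H^{\ast},k,\ell)\ \le\ \s(H^{\ast},k,\ell)+D(k)^{(1-\gamma)\binom{n-N}{r-\ell}}\ \le\ \alpha_{H^{\ast}}+D(k)^{(1-\gamma)\binom{n-N}{r-\ell}}
\]
and
\[
\kappa(H^{\ast},k,\ell)\ \ge\ \s(H^{\ast},k,\ell)\ \ge\ \Bigl(1-A(k)\bigl(\tfrac{k-1}{k}\bigr)^{\binom{n-2\ell}{r-\ell}}\Bigr)\alpha_{H^{\ast}}.
\]
By Lemma~\ref{Lql} one has $\alpha_{H^{\ast}}=\alpha(n,r,k,\ell)$ (for $k=4$ this is exactly the computation preceding~\eqref{klm2}, valid for every value of $|t_1\cap t_2|$). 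I would then set
\[
f_{r,k,\ell}(n)=D(k)^{(1-\gamma)\binom{n-N}{r-\ell}}+A(k)\Bigl(\tfrac{k-1}{k}\Bigr)^{\binom{n-2\ell}{r-\ell}}\alpha(n,r,k,\ell),
\]
so that the two displays give $\alpha(n,r,k,\ell)-f_{r,k,\ell}(n)\le\KC(n,r,k,\ell)\le\alpha(n,r,k,\ell)+f_{r,k,\ell}(n)$; this is Part~1.

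For Part~2 I would use the lower bound $\KC(n,r,k,\ell)\ge\kappa(H_{n,r,k,\ell},k,\ell)\ge D(k)^{\binom{n-c(k)\ell}{r-\ell}}$ from Lemma~\ref{lemma123new} and bound the two summands of $f_{r,k,\ell}(n)$ separately. Since $N\le c(k)\ell$ and $\binom{n-N}{r-\ell}/\binom{n-c(k)\ell}{r-\ell}\to 1$, for $n$ large one has $(1-\gamma)\binom{n-N}{r-\ell}\le(1-\gamma/2)\binom{n-c(k)\ell}{r-\ell}$, whence $D(k)^{(1-\gamma)\binom{n-N}{r-\ell}}/\KC(n,r,k,\ell)\le D(k)^{-(\gamma/2)\binom{n-c(k)\ell}{r-\ell}}\to 0$. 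For the other summand, the second display above gives $\alpha(n,r,k,\ell)\le 2\,\KC(n,r,k,\ell)$ once $n$ is large (using $(\tfrac{k-1}{k})^{\binom{n-2\ell}{r-\ell}}\to 0$, which holds because $r>\ell$), so $A(k)(\tfrac{k-1}{k})^{\binom{n-2\ell}{r-\ell}}\alpha(n,r,k,\ell)/\KC(n,r,k,\ell)\le 2A(k)(\tfrac{k-1}{k})^{\binom{n-2\ell}{r-\ell}}\to 0$. Adding the two estimates yields $f_{r,k,\ell}(n)/\KC(n,r,k,\ell)\to 0$.

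I do not anticipate a genuine obstacle here: all the substantive work has already been done in Section~\ref{sec_extremal}, and what remains is bookkeeping. The two points requiring care are (i) checking that Theorems~\ref{T2} and~\ref{T2_4} really do force the minimum $\ell$-cover of $H^{\ast}$ to have size exactly $c(k)$, which is what makes Lemmas~\ref{L5} and~\ref{L6} applicable to $H^{\ast}$ verbatim, and (ii) observing that, since $\alpha(n,r,k,\ell)$ and $\KC(n,r,k,\ell)$ differ only by lower-order terms, the additive error supplied by Lemma~\ref{L5} and the multiplicative error supplied by Lemma~\ref{L6} may be compared against either of the two quantities without loss.
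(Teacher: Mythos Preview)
Your proposal is correct and follows essentially the same approach as the paper's own proof: fix an extremal hypergraph, apply Lemmas~\ref{L5} and~\ref{L6} (whose hypotheses you justify via Theorems~\ref{T2} and~\ref{T2_4}), identify the resulting product with $\alpha(n,r,k,\ell)$ via Lemma~\ref{Lql}, and take $f_{r,k,\ell}$ to be the sum of the two error terms. One cosmetic point: since $N=\bigl|\bigcup_i t_i\bigr|$ could in principle vary with the choice of extremal hypergraph (hence with $n$, in the unstable cases), it is cleaner to define $f_{r,k,\ell}$ with $\binom{n-\ell}{r-\ell}$ in place of $\binom{n-N}{r-\ell}$, as the paper does; this only enlarges the error term and your limit argument goes through unchanged.
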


\begin{proof} For a fixed $n$, let $H$ be an extremal hypergraph for $\KC(n,r,k,\ell)$, so that $\kappa(H,k,\ell)=\KC(n,r,k,\ell)$. Recall the definition
of $\alpha(n,r,k,\ell)$ from  (\ref{star29}).

By Lemma \ref{L5}, there is an integer $n_0'>0$ such that, for $n \geq n_0'$, the function $g_{r,k,\ell}=g_{r,k,\ell}(n)=D(k)^{\binom{n-\ell}{r-\ell}(1-\gamma)}$, where $\gamma>0$ is independent of $n$, satisfies
\begin{eqnarray} \label{star30}
|\kappa(H,k,\ell)-\s(H,k,\ell)| &\leq& g_{r,k,\ell}(n),
\end{eqnarray}
where $\s(H,k,\ell)$ denotes the number of star colorings of $H$. Moreover,  it is clear that
$$\displaystyle{\lim_{n \rightarrow \infty} \frac{g_{r,k,\ell}(n)}{\KC(n,r,k,\ell)} \leq \lim_{n \rightarrow \infty} \frac{g_{r,k,\ell}(n)}{\s(H,k,\ell)}=0.}$$

On the other hand, Lemma \ref{L6} gives an integer $n_0''>0$ such that, for $n \geq n_0''$, the function $h_{r,k,\ell}=h_{r,k,\ell}(n)=A\left(1-1/k\right)^{\binom{n-2\ell}{r-\ell}}$, where $A$ is independent of $n$, satisfies
\begin{eqnarray} \label{star31}
|\s(H,k,\ell)-\alpha(n,r,k,\ell)| &\leq&
h_{r,k,\ell}(n)\alpha(n,r,k,\ell).
\end{eqnarray}
By Lemma \ref{L6}, we also have
$$(1-h_{r,k,\ell}(n))\alpha(n,r,k,\ell) \leq \KC(n,r,k,\ell),$$
from which we deduce that
$$\displaystyle{\lim_{n \rightarrow \infty}
\frac{h_{r,k,\ell}(n)\alpha(n,r,k,\ell)}{\KC(n,r,k,\ell)}\leq
\lim_{n \rightarrow \infty}
\frac{h_{r,k,\ell}(n)\alpha(n,r,k,\ell)}{(1-
h_{r,k,\ell}(n))\alpha(n,r,k,\ell)}=0},$$
since $h_{r,k,\ell}(n)$ tends to zero as $n$
tends to infinity,
With the triangle inequality, the result now follows from (\ref{star30})
and (\ref{star31}) with $n_0= \max\{n_0',n_0''\}$ and
$f_{r,k,\ell}=g_{r,k,\ell}+h_{r,k,\ell}\alpha(n,r,k,\ell)$.

As a consequence, given $r \geq 2$, $k \geq 4$ and $1 \leq \ell < r$, the number $\KC(n,r,k,\ell)$ is asymptotically equal to $\alpha(n,r,k,\ell)$. \end{proof}

Before finding a formula for $\alpha(n,r,k,\ell)$, we combine the results obtained so far to prove the remaining theorems stated in the introduction.

\begin{proof}[Proof of Theorem \ref{Tint1}]
Let $f(n)=\frac{1}{\KC(n,r,k,\ell)}(f_{r,k,\ell}(n)+g_{r,k,\ell}(n)+h_{r,k,\ell}(n))$,
with $f_{r,k,\ell}(n)$, $g_{r,k,\ell}(n)$ and $h_{r,k,\ell}(n)$ defined in
the proof of Theorem \ref{T3}. Our result follows easily from parts (1) and (2) of Theorem \ref{T3}, and from (\ref{star30}) and (\ref{star31}) with $H$
replaced by $H_{n,r,k,\ell}$, both of which hold because $H_{n,r,k,\ell}$ is one of the hypergraphs in the extremal family described in Definition~\ref{optimal_hyper}.
\end{proof}  

\begin{proof}[Proof of Theorems \ref{Tint2_cont} and \ref{Tint2_cont4}]
The two theorems are proved simultaneously, as they rely on the same arguments. For the first assertion, Theorem~\ref{T3} implies that, as $n$ tends to
infinity, $\KC(n,r,k,\ell)$ is asymptotically equal to $\alpha(n,r,k,\ell)$.
By Lemmas \ref{L5}, \ref{L6} and \ref{Lql}, the function $\alpha(n,r,k,\ell)$
is in turn asymptotically equal to $\kappa(H,k,\ell)$ for any $H$ defined
in Definition~\ref{optimal_hyper}. The result follows.

For the converse, let $H$ be an $r$-uniform hypergraph on $[n]$ with minimum $\ell$-cover $C$. First consider that $k \not\equiv 1~(\bmod \, 3)$. If $|C| \neq c(k)$, $\delta$ is a constant in the interval $(0,1/2)$ and $n$ is sufficiently large, then $\kappa(H,k,\ell)<\delta \KC(n,r,k,\ell)$ by Remark \ref{rem_cover}.

Now, assume that $|C|=c(k)$, but $H$ is not $(C,r)$-complete in the sense of Definition \ref{def_extremal}. Let $e$ be a set covered by $C$ that is not a hyperedge in $H$ and consider $H'=H \cup \{e\}$. By the definition of star colorings, it is clear that $|\SC(H,k,\ell)|\leq \frac{1}{2} |\SC(H',k.\ell)|$,
as each star coloring of $H$ can be extended to a star coloring of $H'$ by assigning to $e$ any color associated with one of the cover elements contained in $e$, and there are at least two such colors. Now, since the set of colorings that are not star colorings is small by Lemma \ref{L5}, we have that, for any $\nu>0$, $\kappa(H,k,\ell) \leq (\frac{1}{2}+\nu) \KC(n,r,k,\ell)$ if $n$ is sufficiently large.

Finally, assume that $H$ is $(C,r)$-complete, but $C$ is not an $\ell$-cover
as in the definition of $\mathcal{H}_{r,k,\ell}(n)$.
Remark \ref{mona3} tells us that there are $\delta>0$ and $n_0>0$ such that, for $n \geq n_0$, $\kappa(H,k,\ell) \leq (1-\delta) \KC(n,r,k,\ell)$. Therefore,
 Theorem \ref{Tint2} holds with $\eps_0=\min\{1/3,\delta\}$ in this case.

If $k \equiv 1~(\bmod \, 3)$, the arguments above can be used, but it remains to prove that, if a $(C,r)$-complete hypergraph $H_{C,r}(n)$
 has minimum $\ell$-cover size $c(k)-1$, it has substantially fewer colorings than a hypergraph in $\mathcal{H}_{r,k,\ell}(n)$. However, this is an immediate consequence of the calculations in Section \ref{case1} (see equation (\ref{mona5})) and in Section \ref{case2} (see  (\ref{mona6}) and (\ref{mona7})).
\end{proof}

\begin{proof}[Proof of Theorem \ref{Tint3}]
For $k \geq 4$, this theorem is an easy consequence of Theorem \ref{Tint2_cont}. Indeed, if either $k=4$ and $\ell=1$ or $k \geq 5$ and $r \geq 2\ell-1$, Theorem \ref{Tint2_cont} implies that there are $\eps_0>0$ and $n_0>0$ such that there is a unique, up to isomorphism, $r$-uniform hypergraph $H$ on $[n]$, $n>n_0$, for which $\kappa(H,k,\ell)>(1-\eps_0)\KC(n,r,k,\ell)$. Stability follows trivially, and in a strong form, as, for any $\eps>0$, the constant $\delta=\eps_0$ is such that, whenever $\kappa(H,k,\ell)>(1-\delta)\KC(n,r,k,\ell)$, we have $|E(H)\bigtriangleup E(H')|=0< \eps$ for some extremal hypergraph $H'$.

Now, if $k=4$ and $\ell>1$, or $k \geq 5$ and $r < 2\ell-1$, define
the $r$-uniform hypergraph $H'_{n,r,k,\ell}=H_{C,r}(n)$, where
the cover $C=\{t_1,\ldots,t_{c(k)}\}$ of $\ell$-subsets of $[n]$ is such that
$$\left|\bigcap_{i=1}^{c(k)} t_i \right|=2\ell-r-1.$$
We observe that, for $k \geq 5$, the case under consideration always has $2\ell-r-1 \geq 1$. Although this is not true for $k=4$, the same argument holds if we let the intersection of all
 cover elements be $\ell-1$. By Theorem \ref{Tint2_cont} we infer that
$$\lim_{n \rightarrow \infty} \frac{\kappa(H_{n,r,k,\ell},k,\ell)}{\KC(n,r,k,\ell)}=\lim_{n \rightarrow \infty} \frac{\kappa(H'_{n,r,k,\ell},k,\ell)}{\KC(n,r,k,\ell)}=1,$$
with $H_{n,r,k,\ell}$ given in Definition \ref{def_extremal}.
On the other hand, in $H'_{n,r,k,\ell}$ every two hyperedges are $(2\ell-r-1)$-intersecting, while at least $(c(k) - 1)
\binom{n - c(k)\ell}{r - \ell}$ hyperedges $e$ in $H_{n,r,k,\ell}$ are disjoint from at least
$$(c(k)-1)\binom{n-r-\ell}{r-\ell} \geq K_1 \cdot n^{r-\ell}$$
hyperedges of $H_{n,r,k,\ell}$, where $K_1$ is a constant. Since the number of hyperedges in an extremal hypergraph is bounded above by $c(k)\binom{n-r-\ell}{r-\ell} \leq K_2 \cdot n^{r-\ell}$ for some constant $K_2$,
and we cannot turn $H_{n,r,k,\ell}$ into $H'_{n,r,k,\ell}$ with the removal or addition of fewer than $K_1 \cdot
n^{r-\ell}$ hyperedges, the problem $\mathcal{P}_{n,r,k,\ell}$ is not stable.  

For $k=3$, the result follows easily from Remark \ref{rem_cover} and from an argument analogous to the one dealing with hypergraphs that are not complete with respect to their minimum cover in the proof of Theorem \ref{Tint2}. Similar results may be easily proven for $k=2$ when $n>n_0$ sufficiently large, and stability also follows in this case.
\end{proof}

\subsection{The exact value of $\alpha(n,r,k,\ell)$}\label{eq_alpha}

Note that, as we determined the extremal hypergraphs in Section~\ref{sec_extremal}, the value of $\alpha(n,r,k,\ell)$  was calculated in the case when $k \equiv 1~(\bmod \, 3)$. Indeed
\begin{itemize}
\item[(a)] $\displaystyle{\alpha(n,r,4,\ell)=6 \cdot  4^{ \binom{n-\ell }{r - \ell}}}$ (see equation (\ref{klm2})).

\item[(b)] if $r<2 \ell$ and $k \geq 5$, $\displaystyle{\alpha(n,r,k,\ell)=
\binom{c(k)}{2}\frac{k!}{2 \cdot 2 \cdot (3!)^{c(k)-2}
}3^{(c(k)-2)\binom{n-\ell}{r-\ell}}2^{2\binom{n-\ell}{r-\ell}}}$ (see equations (\ref{eqQL}) and (\ref{eq501})).

\item[(c)] if $r \geq 2 \ell$ and $k \geq 5$,
\begin{eqnarray}
&& \alpha(n,r,k,\ell) \nonumber \\
&=& \binom{c(k)}{2}\frac{k!}{(3!)^{c(k)-2} \cdot 2
\cdot 2} \cdot \left( \prod_{x=0}^{\min (c-2,q-2)} \prod_{I \in
\binom{X
}{x}} (4 +3x)^{
A(x)} \right)  \nonumber \\
&& \times \left( \prod_{y=0}^{\min (c-2, q-1)} \prod_{J \in \binom{X
}{y}}
( 2 + 3y)^{B(y)} \right)^2 \cdot \left(
\prod_{z=1}^{\min (c-2,q)} \prod_{K \in \binom{X
}{z}} (  3z)^{
C(z)} \right),
\end{eqnarray}
where $q=\lfloor r/\ell \rfloor$, $A(x)$, $B(y)$ and $C(z)$ are given by (\ref{a737}), (\ref{b737}) and (\ref{c737}), respectively, and $X$ has size $c(k)-2$. This follows from equation (\ref{neweq601}).
\end{itemize}
Observe that, in the above, the formula for the case $r=2\ell-1$ is given in conjunction with the case $r<2\ell-1$, as there are no hyperedges containing more than one cover element. It is easy to extend these calculations to general values of $k$, i.e., if $k=4$ or $r < 2 \ell$, then
$$\alpha(n,r,k,\ell)=N(k)D(k)^{\binom{n-\ell}{r-\ell}},$$
with $N(k)$ and $D(k)$ given in Definition \ref{defCND}.

If $r\geq 2 \ell$, the expression for $\alpha(n,r,k,\ell)$ is
\begin{itemize}
\item[(i)] for $k \equiv 0 ~(\bmod \, 3)$,
\begin{eqnarray*}
& \alpha(n,r,k,\ell)
&= N(k) \prod_{z=1}^{\min (c(k),q)} \prod_{K \in \binom{X
}{z}} (3z)^{
C(z)},
\end{eqnarray*}
\item[(ii)] for $k \equiv 1 ~(\bmod \, 3)$,
\begin{eqnarray*}
&\alpha(n,r,k,\ell)
=& N(k) \left( \prod_{x=0}^{\min (c(k)-2,q-2)} \prod_{I \in
\binom{X}{x}} (4 +3x)^{
A(x)} \right)  \nonumber \\
&& \times \left( \prod_{y=0}^{\min (c(k)-2, q-1)} \prod_{J \in \binom{X
}{y}}
( 2 + 3y)^{B(y)} \right)^2 \cdot \left(
\prod_{z=1}^{\min (c(k)-2,q)} \prod_{K \in \binom{X
}{z}} ( 3z)^{
C(z)} \right),
\end{eqnarray*}
\item[(iii)] for $k \equiv 2 ~(\bmod \, 3)$,
\begin{eqnarray*}
&\alpha(n,r,k,\ell)
=& N(k) \left( \prod_{y=0}^{\min (c(k)-1, q-1)} \prod_{J \in \binom{X
}{y}}
( 2 + 3y)^{B(y)} \right) \cdot \left(
\prod_{z=1}^{\min (c(k)-1,q)} \prod_{K \in \binom{X
}{z}} (  3z)^{
C(z)} \right),
\end{eqnarray*}
\end{itemize}
where $q=\lfloor r/\ell \rfloor \geq 2$ and $X$ has size $c(k)=k/3$, if $k \equiv 0~(\bmod \, 3)$, $c(k)-2=\lfloor k/3 \rfloor -1$, if $k \equiv 1~(\bmod \, 3)$, and  $c(k)-1=\lfloor k/3 \rfloor$, if $k \equiv 2~(\bmod \, 3)$. The functions
$A(x)$, $B(y)$ and $C(z)$ are given by
\begin{eqnarray*}
A(x) &=& \sum_{i=0}^{c(k)-2-x} (-1)^i \binom{ n - \ell (x +2 +i)}{r -
\ell (x + 2 +i)} \binom{c(k)-2-x}{i}\\
B(y) &=& \sum_{i=0}^{c(k)-1-y} (-1)^i \binom{ n - \ell (y +1 +i)}{r -
\ell (y + 1 +i)} \binom{c(k)-1-y}{i}\\
C(z) &=& \sum_{i=0}^{c(k)-z} (-1)^i \binom{ n - \ell (z  +i)}{r -
\ell (z  +i)} \binom{c(k)-z}{i}.
\end{eqnarray*}

\section{Open problems and concluding remarks}\label{sec_final}

In this paper, we have addressed the problem $\mathcal{P}_{n,r,k,\ell}$ of
determining $\KC(n,r,k,\ell)$, the largest number of $(k,\ell)$-Kneser colorings over all $r$-uniform hypergraphs on $n$ vertices. This has been fully solved in the following cases:
\begin{itemize}
\item[(1)] for any value of $n$, $r$ and $\ell$, if $k=2$;

\item[(2)] for any value of $r$ and $\ell$, if $k \in \{3,4\}$ and $n$ is sufficiently large;

\item[(3)] for any value of $r \geq 2 \ell-1$, if $k \geq 5$ and $n$ is sufficiently large.
\end{itemize}
Moreover, we have described precisely the extremal hypergraphs in each of these cases. In particular, when $n$ is sufficiently large, the restriction of this problem to graphs, namely $\mathcal{P}_{n,2,k,1}$,  has been solved completely.

For all remaining values of $r$, $k$ and $\ell$, we found the problem $\mathcal{P}_{n,r,k,\ell}$ to be unstable in the sense of Definition \ref{def_stable}. Notwithstanding, we have determined the asymptotic value of $\KC(n,r,k,\ell)$, as well as the family of asymptotically extremal hypergraphs, that is, the family of $r$-uniform hypergraphs $H=H(n)$ on $[n]$ such that, for every $\eps>0$, there exists $n_0$ such that, for $n>n_0$, the inequality $\kappa(H,k,\ell) \geq (1-\eps)\KC(n,r,k,\ell)$ holds.

The hypergraph $H_{n,r,k,\ell}$ given in Definition \ref{def_extremal} plays an important role, as it is the unique extremal hypergraph for $\mathcal{P}_{n,r,k,\ell}$ whenever $n$ is sufficiently large and the problem is stable. Furthermore, even when the problem is unstable, we have established that $H_{n,r,k,\ell}$ is asymptotically optimal. However, Theorem~\ref{theo1_inter} implies that $H_{n,r,k,\ell}$ is not optimal in the case $k=4$. This behavior is not accidental, and it is possible to show that, for $n$ sufficiently large, the hypergraph $H_{n,r,k,\ell}$ is never extremal when $\mathcal{P}_{n,r,k,\ell}$ is unstable. To prove this, one may compare the number of colorings of $H_{n,r,k,\ell}$ with the number of colorings in a $(C,r)$-complete hypergraph with the right cover size for which every two cover elements have intersection of size $ 2\ell -r -1$, and show that the latter has more Kneser colorings. We conjecture that the following stronger result is true.
\begin{conjecture}\label{conj1}
If $k \geq 5$, $r$ and $\ell$ are positive integers with $\ell < r < 2 \ell$, then a hypergraph $H=H_{C,r}(n)$ such that
$$\kappa(H,k,\ell)=\KC(n,r,k,\ell)$$
must satisfy $|C|=c(k)=\lceil k/3 \rceil$ and $|t_i \cap t_j|=2\ell -r -1$ for every distinct $t_i,t_j \in C$.
\end{conjecture}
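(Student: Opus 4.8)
For $r=2\ell-1$ the statement is contained in Theorem~\ref{T2}(a) (the extremal cover is pairwise disjoint and $2\ell-r-1=0$), so assume $\ell<r<2\ell-1$. By Theorem~\ref{T2}(b) every extremal hypergraph has the form $H=H_{C,r}(n)$ for a cover $C=\{t_1,\dots,t_{c(k)}\}$ all of whose pairwise unions exceed $r$, that is, with intersection sizes $y_{ij}:=|t_i\cap t_j|\le 2\ell-r-1$; what must be shown is that, among such \emph{admissible} covers, exactly those with $y_{ij}=2\ell-r-1$ for \emph{every} pair maximize $\kappa(H,k,\ell)$. The plan is to follow the scheme of the $k=4$ analysis (Theorem~\ref{theo1_inter}), writing $\kappa(H,k,\ell)=G(H)+\mathrm{NS}(H)$ as the number of generalized star colorings plus the number of non-star colorings, while exploiting a feature special to the range $r<2\ell$: since $|t_i\cup t_j|>r$, no hyperedge contains two cover elements, so the stars $\mathrm{star}(t_i)=\{e\in E:t_i\subseteq e\}$ partition $E$ into $c(k)$ disjoint blocks of the \emph{same} size $\binom{n-\ell}{r-\ell}$. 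Consequently $G(H)$ is an explicit expression in $k$ and $\binom{n-\ell}{r-\ell}$ alone (an inclusion--exclusion over ordered set partitions of the colors among the stars, each color set used surjectively on its block), hence the \emph{same} for every admissible $C$; thus maximizing $\kappa$ amounts to maximizing $\mathrm{NS}(H)$.

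The core of the argument is then a bound on $\mathrm{NS}(H)$ in terms of the $y_{ij}$. In a non-star coloring some color $\sigma$ has a ``bridge'' class $F_\sigma$ which is $\ell$-intersecting but meets two stars, say $\mathrm{star}(t_i)$ and $\mathrm{star}(t_j)$. A counting argument of the type used for Lemma~\ref{L1} and Lemma~\ref{lemmacap} shows that, because $r<2\ell$, such a bridge is confined to a polynomially thin seam: the set of $a\supseteq t_i$ that can belong to a bridge towards $t_j$ has size $\Theta(n^{2r-3\ell+y_{ij}})$, which equals $\Theta(n^{r-\ell-1})$ when $y_{ij}=2\ell-r-1$ and has strictly smaller polynomial order when $y_{ij}<2\ell-r-1$. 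I would then (i) for the cover with all $y_{ij}=2\ell-r-1$ exhibit an explicit family of non-star colorings — take a nearly generalized star coloring, devote one color to a maximal bridge across a pair $t_i,t_j$ with $|t_i\cup t_j|=r+1$, and finish with the remaining $k-1$ colors in star fashion — obtaining $\mathrm{NS}(H)\ge c_1\,g(n)$ for an explicit superpolynomial function $g(n)$ (the analogue of the bonus term $3^{\binom{n-\ell-1}{r-\ell}}4^{\binom{n-\ell-1}{r-\ell-1}}$ in~(\ref{FGH3})) whose order depends on $\max_{i\ne j}y_{ij}$ and, more finely, on how many pairs attain that maximum; and (ii) for any admissible cover with some $y_{ij}<2\ell-r-1$, bound $\mathrm{NS}(H')$ from above by controlling the size of every bridge class via Lemma~\ref{lemmacap}-style estimates and summing over the $O(1)$ choices of bridging data, giving $\mathrm{NS}(H')\le(1-c_2)\,g(n)$ for some $c_2>0$. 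Since $G(H)$ is common to all admissible hypergraphs and the error terms supplied by Lemmas~\ref{L5} and~\ref{L6} are of much smaller order than $g(n)$, combining (i) and (ii) yields $\kappa(H,k,\ell)>\kappa(H',k,\ell)$ for $n$ large, which is the claim.

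The main obstacle — and the reason this remains a conjecture — is making step (ii) uniform. For $k=4$ there are only two cover elements and a single intersection parameter, so the ``exceptional pairs'' of $r$-sets carrying the non-star structure can be enumerated directly, as in~(\ref{FGH9}). For $k\ge5$ there are $\binom{c(k)}{2}$ parameters $y_{ij}$; a single non-star coloring may contain several bridge classes spread over several pairs of cover elements, some pairs overlapping and some disjoint; and one must also handle (or rule out, which is expected when $r<2\ell$) colors that bridge three or more stars at once. Proving that the gain from enlarging one intersection always beats whatever is lost elsewhere — equivalently, that the ``bridge capacities'' of distinct pairs combine in the right way and cannot be traded off — requires a careful accounting of how these classes interact, in particular an argument that bridges supported on disjoint pairs of the cover contribute essentially independently up to a controllable error. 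Establishing such a master inequality, valid for every candidate cover and every admissible arrangement of bridge classes, is the crux; granting it, the remainder is bookkeeping of the kind already carried out for $k=4$.
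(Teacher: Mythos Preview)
The statement is Conjecture~\ref{conj1}, which the paper explicitly leaves open; there is no proof in the paper to compare against. The only hint the authors give (in Section~\ref{sec_final}) is how to show the weaker fact that the disjoint-cover hypergraph $H_{n,r,k,\ell}$ is not extremal, by comparing it with the cover whose pairwise intersections all equal $2\ell-r-1$. Your proposal is honest about this: it is a strategy sketch, not a proof, and you flag the obstruction yourself.

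Your plan is the natural extension of the paper's $k=4$ argument (Theorem~\ref{theo1_inter}). The key structural observation you make --- that in the admissible range $|t_i\cup t_j|>r$ every hyperedge lies in exactly one star, all stars have size $\binom{n-\ell}{r-\ell}$, and hence the generalized-star count $G(H)$ is the same for every admissible cover --- is correct and is exactly the reduction one wants: the comparison collapses to the non-star colorings. This is already implicit in the paper's formula~\eqref{FGH2} once $r-2\ell+y<0$.

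Two comments on the remaining part. First, your seam estimate $\Theta(n^{2r-3\ell+y_{ij}})$ should be rederived carefully; the Lemma~\ref{lemmacap} bound is $|\mathcal{F}_1(b)|\le C\,n^{r-2\ell+p+y}$ with $p+y\le\ell-1$, so the exponent that governs the \emph{maximal} bridge class is $r-\ell-1$ regardless of $y$, while what actually depends on $y_{ij}$ is the \emph{number} and combinatorial richness of near-maximal bridges --- it is this finer count, not the raw polynomial order, that should drive the lower bound in (i). Second, your diagnosis of the obstacle is accurate: for $c(k)\ge 3$ one must control colorings with several simultaneous bridges, possibly on overlapping pairs of cover elements, and prove that no configuration with some $y_{ij}<2\ell-r-1$ can compensate via other pairs. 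That uniform inequality is precisely what the paper does not attempt, and establishing it is the content of the conjecture.
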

Note that, even if Conjecture \ref{conj1} is true, there may be several
 configurations of $\ell$-sets in $C$ whose pairwise intersections
 have size $2\ell - r - 1$, depending on the size of $C$. Therefore it might be
of interest to investigate which of these configurations yield the largest number of Kneser colorings. Results in this direction would probably be useful in determining whether optimal configurations for the problem $\mathcal{P}_{n,r,k,\ell}$ are always unique up to isomorphisms.

\section{Appendix}

This appendix presents calculations needed to finalize the
analysis of the
case $r \geq 2\ell -1$ in the proof of Theorem \ref{T2}. Recall that our
aim is to show that $\s(H_0^{\ast}, P,k,\ell)$  is at least as big as
 $\s(H_1^{\ast}, P',k,\ell)$, with $\s(H_0^{\ast}, P,k,\ell)$ and $\s(H_1^{\ast}, P',k,\ell)$
given by equations (\ref{neweq601}) and (\ref{neweq602}), respectively. Recall that $q := \lfloor r/\ell \rfloor \geq 1$.

We infer from
(\ref{a737})--(\ref{neweq602})
that
\begin{eqnarray} \label{newjj1}
&& \frac{\s(H_0^{\ast}, P,k,\ell)}{\s(H_1^{\ast}, P',k,\ell)} \nonumber \\
&=& \frac{ \prod_{x=0}^{\min ( c-2, q-2)} \prod_{I \in
\binom{X
}{x}} (4 +3x)^{
A(x)}
}{\prod_{x=0}^{\min (c-2, q-1)} \prod_{I \in \binom{X}{x}} (4 +3x)^{
D(x)} }
\cdot  \frac{\prod_{z=1}^{\min (c-2, q)} \prod_{K \in \binom{X
}{z}} (  3z)^{
C(z)}}{\prod_{z=1}^{\min (c-2,q) } \prod_{K \in \binom{
X}{z}} (  3 z)^{
E(z)}}
 \nonumber \\
&& \times
\left( \prod_{y=0}^{\min (c-2,q-1)} \prod_{J \in \binom{X
}{y}}
( 2 + 3y)^{B(y) } \right)^2 \nonumber \\
&=& \frac{ \left( \prod_{y=0}^{\min ( c-2, q-1)}
( 2 + 3y)^{2
B(y) \binom{c-2}{y}} \right) }
{\left( \prod_{z=1}^{\min (c-2,q)} (  3 z)^{
( E(z) -
C(z) ) \binom{c-2}{z}  }\right)
} \cdot \frac{ \prod_{x=0}^{\min ( c-2, q-2)}
 (4 +3x)^{
A(x) \binom{c-2}{x}}
}{\prod_{x=0}^{\min (c-2, q-1)}  (4 +3x)^{
D(x)\binom{c-2}{x}} }   \nonumber \\
&=& \frac{ \left( \prod_{y=1}^{\min ( c-2, q-1)}
( 2 + 3y)^{2
B(y) \binom{c-2}{y}} \right) }
{\left( \prod_{z=1}^{\min (c-2, q)} (  3 z)^{
( E(z) -
C(z) )\binom{c-2}{z}  }\right)
} \cdot \frac{ \prod_{x=1}^{\min ( c-2, q-2)}
 (4 +3x)^{
A(x) \binom{c-2}{x}}
}{\prod_{x=1}^{\min (c-2, q-1)}  (4 +3x)^{
D(x) \binom{c-2}{x}} }    \nonumber \\
&&  \times
   \frac{2^{2 B(0) }}{4^{
  D(0) - A(0) }}.  
\end{eqnarray}

\begin{lemma}
For every non-negative integer  $x$, we have
\begin{eqnarray} \label{klm1}
B(x) = D(x) - A(x) \hspace{1cm} &\mbox{and}& \hspace{1cm} B(x) = E(x) - C(x).
\end{eqnarray}
\end{lemma}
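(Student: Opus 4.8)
The plan is to interpret all five quantities as inclusion--exclusion counts of explicit families of $r$-subsets of $[n]$ and then to read off the two identities by partitioning one such family into two. Throughout the Appendix the cover sets $t_1,\dots,t_c$ are pairwise disjoint $\ell$-sets, so for any subcollection $\mathcal T$ to be forced into an $r$-set and a disjoint subcollection $\mathcal T'$ to be forbidden, the number of $r$-subsets of $[n]$ containing every member of $\mathcal T$ and no member of $\mathcal T'$ equals $\sum_{S\subseteq\mathcal T'}(-1)^{|S|}\binom{n-\ell(|\mathcal T|+|S|)}{r-\ell(|\mathcal T|+|S|)}$. First I would note that, once an $x$-element subset of $\{1,\dots,c-2\}$ has been fixed, the defining formulas \eqref{a737}--\eqref{e737} are exactly counts of this shape: $A(x)$ forces $t_{c-1}$, $t_c$ and the chosen $x$ sets of $X$ while forbidding the other $c-2-x$ sets of $X$; $D(x)$ is the same count with $t_c$ simply deleted from the forced list (and not forbidden); $C(x)$ forbids both $t_{c-1}$ and $t_c$; $E(x)$ forbids only $t_{c-1}$; and $B(x)$ forces $t_c$ and forbids $t_{c-1}$, with the same pattern on $X$.

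Then I would prove $D(x)=A(x)+B(x)$ as follows. Let $\mathcal D$ be the family of $r$-subsets $e$ of $[n]$ with $t_{c-1}\subseteq e$ realising the fixed containment pattern on $X$, so that $|\mathcal D|=D(x)$, and split $\mathcal D=\mathcal D^{+}\,\dcup\,\mathcal D^{-}$ according to whether or not $t_c\subseteq e$. From the descriptions above, $|\mathcal D^{+}|=A(x)$, while $\mathcal D^{-}$ is the set of $r$-sets containing $t_{c-1}$, avoiding $t_c$, and realising the pattern on $X$; swapping the vertex sets of the two disjoint $\ell$-sets $t_{c-1}$ and $t_c$ gives a bijection of $\mathcal D^{-}$ onto the family counted by $B(x)$ --- this is precisely the symmetry between the roles of $t_{c-1}$ and $t_c$ already invoked for the squared $B$-factor in \eqref{neweq601}. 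Hence $B(x)=D(x)-A(x)$. The second identity is obtained identically: take $\mathcal E=\{e:\ t_{c-1}\not\subseteq e,\ \text{pattern on }X\}$, so $|\mathcal E|=E(x)$, and split according to whether or not $t_c\subseteq e$; the part with $t_c\subseteq e$ has size $B(x)$ and the part with $t_c\not\subseteq e$ has size $C(x)$, so $B(x)=E(x)-C(x)$.

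I do not expect a genuine obstacle here; the single point that needs a sentence of care is the symmetry identification $|\mathcal D^{-}|=B(x)$ (equivalently $|\mathcal E^{+}|=B(x)$), which uses only that the cover sets are pairwise disjoint. Should a purely computational verification be preferred, the same two identities follow by shifting the summation index $i\mapsto i-1$ in the sum for $A(x)$, respectively applying Pascal's identity $\binom{m}{i}+\binom{m}{i-1}=\binom{m+1}{i}$ to the binomial coefficients appearing in $E(x)-C(x)$, and checking that the resulting range and coefficients coincide with those in \eqref{b737}; this is routine and I would relegate it to a remark.
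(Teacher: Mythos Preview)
Your proposal is correct and essentially matches the paper's own proof: both argue combinatorially that the $D$-family (respectively $E$-family) splits according to whether or not $t_c$ is contained in the hyperedge, yielding $A(x)+B(x)$ (respectively $B(x)+C(x)$), with the symmetry between $t_{c-1}$ and $t_c$ used to identify the ``contains $t_{c-1}$ only'' piece with $B(x)$. The paper also supplies the purely algebraic verification via Pascal's identity that you mention as an alternative, so the two write-ups differ only in exposition.
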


\begin{proof}
 Consider the sum $A(x) + B(x)$. By (\ref{a737}) and (\ref{b737}), this counts
the number of hyperedges $e$, which contain both $\ell$-sets  $t_{c-1}$ and
$t_{c}$ or which contain only the $\ell$-set $t_{c-1}$,
and with  $[e]^{\ell}
\cap \{t_1, \ldots , t_{c-2 } \} = \{ t_i \; | \; i \in I \}$
 for a fixed $x$-set
$I$, which by (\ref{d737}) is equal to $D(x)$. Similarly, the identity
$B(x) + C(x)
= E(x)$ follows.

Alternatively, one can see this with (\ref{a737}), (\ref{b737}),  (\ref{d737}),
and by Pascal's identity,
namely,
\begin{eqnarray*}
&& B(x) - (D(x) - A(x))\\
&=& \sum_{i=0}^{c-1-x} (-1)^{i}
\binom{n - \ell (x+i+1)}{r - \ell (x+i+1)} \binom{c-1-x}{i}\\
&& -
\sum_{i=0}^{c-2-x} (-1)^{i}
\binom{n - \ell (x+i+1)}{r - \ell (x+i+1)} \binom{c-2-x}{i}\\
&& +
\sum_{i=0}^{c-2-x} (-1)^{i}
\binom{n - \ell (x+i+2)}{r - \ell (x+i+2)} \binom{c-2-x}{i} \\
&=&
\sum_{i=1}^{c-1-x} (-1)^{i}
\binom{n - \ell (x+i+1)}{r - \ell (x+i+1)} \binom{c-2-x}{i-1}\\
&& -
\sum_{i=1}^{c-1-x} (-1)^{i}
\binom{n - \ell (x+i+1)}{r - \ell (x+i+1)} \binom{c-2-x}{i-1}
= 0.
\end{eqnarray*}
Moreover, by (\ref{b737}), (\ref{c737}),   (\ref{e737}),
and, again by Pascal's identity,
we infer that
\begin{eqnarray*}
&& B(x) - (E(x) - C(x))\\
&=& \sum_{i=0}^{c-1-x} (-1)^{i}
\binom{n - \ell (x+i+1)}{r - \ell (x+i+1)} \binom{c-1-x}{i} \\
&& -
\sum_{i=0}^{c-1-x} (-1)^{i}
\binom{n - \ell (x+i+1)}{r - \ell (x+i+1)} \binom{c-1-x}{i}
= 0,
\end{eqnarray*}
which shows (\ref{klm1}).
\end{proof}

Now, (\ref{newjj1}) and (\ref{klm1}) give
\begin{eqnarray} \label{anewjj1}
&& \frac{\s(H_0^{\ast}, P,k,\ell)}{\s(H_1^{\ast}, P',k,\ell)}
= \frac{  \prod_{y=1}^{\min ( c-2, q-1)}
( 2 + 3y)^{2
B(y) \binom{c-2}{y}}  }
{ \prod_{z=1}^{\min (c-2, q)} (  3 z)^{
B(z) \binom{c-2}{z}
}
} \cdot \frac{ \prod_{x=1}^{\min ( c-2, q-2)}
(4 +3x)^{
A(x) \binom{c-2}{x}
}}
{\prod_{x=1}^{\min (c-2, q-1)}  (4 +3x)^{
D(x) \binom{c-2}{x}} }.
\end{eqnarray}

In the following
we distinguish two cases, depending on the order of $c$ and $q$.

\subsection{The Case  $c  \leq q$}
In this case,
equality
 (\ref{anewjj1}) becomes with (\ref{klm1})
\begin{eqnarray} \label{aanewjj1}
&& \frac{\s(H_0^{\ast}, P,k,\ell)}{\s(H_1^{\ast}, P',k,\ell)} \nonumber \\
&=& \frac{  \prod_{y=1}^{ c-2}
( 2 + 3y)^{2
B(y) \binom{c-2}{y}}  }
{\left( \prod_{z=1}^{c-2} (  3 z)^{
B(z) \binom{c-2}{z}  
) } \right)
\cdot  \left(
\prod_{x=1}^{c-2}  (4 +3x)^{(D(x) - A(x)) \binom{c-2}{x}
 } \right) }  \nonumber \\
&=& \left(
\prod_{x=1}^{c-2}  \frac{
( 2 + 3x)^{2
B(x) \binom{c-2}{x}}  }
{  (  3 x)^{
B(x) \binom{c-2}{x}
  } \cdot
(4 +3x)^{
B(x) \binom{c-2}{x}  
  }
   }
  \right) \nonumber \\
&=& \prod_{x=1}^{c-2}  \left(\frac{
( 2 + 3x)^{2
}  }
{  3 x
(4 +3x)
   } \right)^{B(x) \binom{c-2}{x}}.
\end{eqnarray}
With $ 1 \leq x \leq c-2$, we have by monotonicity
$$
\frac{(2+3x)^2}{ 3x( 4 + 3x)} = 1 + \frac{4}{12x + 9x^2}
\geq 1 + \frac{4}{(3c-2)(3c-6)}
$$
for $c  \geq 3$, hence (\ref{aanewjj1}) becomes
\begin{eqnarray} \label{newjj3}
 \frac{\s(H_0^{\ast}, P,k,\ell)}{\s(H_1^{\ast}, P',k,\ell)}
&\geq&
\prod_{x=1}^{c-2}  \left( 1 + \frac{4}{(3c-2)(3c-6)}  \right)^{
B(x) \binom{c-2}{x}}  \nonumber \\
&=&
\left( 1 + \frac{4}{(3c-2)(3c-6)}  \right)^{ \sum_{x=1}^{c-2}
B(x) \binom{c-2}{x}}  
> 1.
\end{eqnarray}

Using (\ref{eq501}) and (\ref{eq502}), and by
Lemma \ref{L5} we obtain from (\ref{newjj3}) for $c \geq 3$
\begin{eqnarray}\label{mona6}
\frac{\kappa(H^{\ast}_0,k,\ell)}{\kappa(H^{\ast}_1,k,\ell)}
&>& \frac{\s(H^{\ast}_0,k,\ell)}{2\s(H^{\ast}_1,k, \ell
)} \nonumber\\
& \geq& \frac{|\mathcal{S}_0||\mathcal{P}_s|}{2|\mathcal{S}_1||\mathcal{P}_{s'}|}\cdot \left(  1 +
\frac{4}{(3c-2)(3c-6)}
\right)^{\sum_{x=1}^{c-2} B(x) \binom{c-2 }{x}  } \nonumber \\
&=& \frac{3c}{2} \cdot \left(  1 +
\frac{4}{(3c-2)(3c-6)}
\right)^{\sum_{x=1}^{c-2} B(x)  \binom{c-2}{x}}
>1,
\end{eqnarray}
implying that the extremal hypergraphs have $\ell$-cover of size $c=c(k)$.

\subsection{The Case  $c  \geq q+1$}
Observe first, that for $r =2\ell - 1$ we have $q=1$ and equation
(\ref{anewjj1}) becomes
\begin{eqnarray} \label{anewjj1zz}
\frac{\s(H_0^{\ast}, P,k,\ell)}{\s(H_1^{\ast}, P',k,\ell)}
= \frac{1}{ 3^{B(1) (c-2)}} = 1,
\end{eqnarray}
since $B(1) = 0$ by (\ref{b737}).

Assume in the following that
$q = \lfloor r/\ell \rfloor \geq 2$. 
Notice that with (\ref{b737}) and (\ref{d737}) we have
$B(q) = 0$ and $B(q-1) = D(q-1) = \binom {n - \ell q}{ r - \ell q}$.
Then,  for $c \geq q+1 \geq 3$
 equality (\ref{anewjj1}) becomes with (\ref{klm1}):
\begin{eqnarray} \label{acnewjj1}
 \frac{\s(H_0^{\ast}, P,k,\ell)}{\s(H_1^{\ast}, P',k,\ell)}
&=& \frac{  \prod_{y=1}^{ q-1}
( 2 + 3y)^{2
B(y) \binom{c-2}{y}}  }
{ \prod_{z=1}^{min(c-2,q)} (  3 z)^{
B(z) \binom{c-2}{z}  
}
} \cdot \frac{ \prod_{x=1}^{q-2}
(4 +3x)^{A(x) \binom{c-2}{x}
 }
}{\prod_{x=1}^{ q-1}  (4 +3x)^{D(x) \binom{c-2}{x}
} } \nonumber \\
&=& \frac{  \prod_{y=1}^{ q-1}
( 2 + 3y)^{2
B(y) \binom{c-2}{y}}  }
{ \prod_{z=1}^{ q-1} (  3 z)^{
B(z) \binom{c-2}{z}  
}
} \cdot \frac{ \prod_{x=1}^{q-2}
(4 +3x)^{A(x) \binom{c-2}{x}
 }
}{\prod_{x=1}^{ q-1}  (4 +3x)^{D(x) \binom{c-2}{x}
} } \nonumber \\
&=& \left(
\prod_{x=1}^{q-2}  \frac{
( 2 + 3x)^{2
B(x) \binom{c-2}{x}}  }
{  (  3 x)^{
B(x) \binom{c-2}{x}  
} \cdot
(4 +3x)^{
B(x) \binom{c-2}{x}  
}
   }
  \right)  \nonumber \\
&& \times
\frac{ (3q-1)^{2B(q-1) \binom{c-2}{q-1}}  }{(3q-3)^{B(q-1)\binom{c-2}{q-1}}
 (3q+1)^{D(q-1) \binom{c-2}{q-1} }   } \nonumber \\
&=& \left(
\prod_{x=1}^{q-2}  \frac{
( 2 + 3x)^{2
B(x) \binom{c-2}{x}}  }
{  (  3 x)^{
B(x) \binom{c-2}{x} }
(4 +3x)^{
B(x) \binom{c-2}{x}  
  } } \right)   \nonumber \\
&& \times
\frac{ (3q-1)^{2B(q-1) \binom{c-2}{q-1}}  }{(3q-3)^{B(q-1) \binom{c-2}{q-1}}
 (3q+1)^{B(q-1) \binom{c-2}{q-1} }   } .
\end{eqnarray}
Clearly    we  have
$$
\frac{(2+3x)^2}{ 3x( 4 + 3x)}
> 1 \hspace{0.5cm} \mbox{   and   } \hspace{0.5cm}
\frac{(3q-1)^2}{(3q-3)(3q+1)}
> 1,
$$
 hence (\ref{acnewjj1}) becomes for $q \geq 2$ and $c \geq q+1$
\begin{eqnarray} \label{zabnewjj3}
&& \frac{\s(H_0^{\ast}, P,k,\ell)}{\s(H_1^{\ast}, P',k,\ell)}
\geq
\left( \frac{ (3q-1)^{2}}{(3q-3)
 (3q+1)} \right)^{B(q-1) \binom{c-2}{q-1} }  
> 1.
\end{eqnarray}
By (\ref{eq501}) and (\ref{eq502}), and
Lemma \ref{L5} we obtain from (\ref{zabnewjj3}) and
(\ref{anewjj1zz}), for any $q \geq 1$ and $r \geq 2\ell - 1$,
and, for $n$ sufficiently large,
\begin{eqnarray}\label{mona7}
 \frac{\kappa(H^{\ast}_0,k,\ell)}{\kappa(H^{\ast}_1,k,\ell)}
&>& \frac{\s(H^{\ast}_0,k,\ell)}{2\s(H^{\ast}_1,k,\ell
)}
 \geq  \frac{|\mathcal{S}_0||\mathcal{P}_s|}{2|\mathcal{S}_1||
\mathcal{P}_{s'}|}
=
 \frac{3c}{2}
>1,
\end{eqnarray}
hence also for $c \geq q+1$, and thus in all cases
the extremal hypergraphs have $\ell$-cover of size $c=c(k)$.


\begin{thebibliography} {999}
\bibitem{ABKS04} N. Alon, J. Balogh, P. Keevash, and B. Sudakov, The Number of Edge Colorings With No Monochromatic Cliques, Journal of the London
Mathematical Society (2) 70 (2), 273--288, 2004.

\bibitem{AK} R. Ahlswede, and L. H. Khachatrian, The Complete Intersection
Theorem for Systems of Finite Sets, European Journal of Combinatorics 18 (2),
125--136, 1997.

\bibitem{BaBoSi04} J. Balogh, B. Bollob\'as, and M. Simonovits, The Number
of Graphs Without Forbidden Subgraphs, Journal of Combinatorial Theory
Ser. B 91 (1), 1--24, 2004.

\bibitem{BaBoSi08} J. Balogh, B. Bollob\'as, and M. Simonovits,  The Typical
Structure of Graphs Without Given Excluded Subgraphs,  Random Structures \&
Algorithms 34 (3), 305--318, 2009.
\bibitem{ErFrRo86}  P. Erd\H{o}s, P. Frankl, and V. R\"odl,
The Asymptotic Number of Graphs not Containing a Fixed Subgraph and a
Problem for Hypergraphs Having No Exponent, Graphs and Combinatorics 2,
113--121, 1986.

\bibitem{ErKlRo76} P. Erd\H{o}s, D. J. Kleitman, and B. L. Rothschild,
Asymptotic Enumeration of $K_n$-free Graphs, Colloquio Internationale sulle
Teorie Combinatorie (Rome, 1973), Tomo II, Accad. Naz. Lincei,
Rome, Atti dei Convegni Lincei 17, 29--27, 1976.

\bibitem{EKR} P. Erd\H{o}s, C. Ko, and R. Rado, Intersection Theorems for
Systems of Finite Sets, Quarterly Journal of Mathematics, Oxford Series, series 2,
12, 313--320, 1961.

\bibitem{Er74}  P. Erd\H{o}s, Some New Applications of Probabilistic Methods
to Combinatorial Analysis and Graph Theory, Proceedings of the Fifth
Southeastern Conference on Combinatorics, Graph Theory, and Computing
(Florida Atlantic University, Boca Raton, Fl.), Winnipeg, Manitoba,
Congressus Numerantium X, Utilitas Mathematica, 39--51, 1974.


\bibitem{FS05} Z. F\"uredi and  M. Simonovits, Triple Systems Not Containing a
Fano Plane, Combinatorics, Probability \& Computing 14 (4), 467--484, 2005.

\bibitem{KS05} P. Keevash and B. Sudakov, The Tur\'an Number
of the Fano Plane, Combinatorica 25 (5), 561--574, 2005.

\bibitem{KoPrRo85}
Ph. G. Kolaitis, H. J. Pr\"omel, and B. L.  Rothschild,
Asymptotic Enumeration and a $0-1$ Law for $m$-Clique Free Graphs, Bulletin
of the American Mathematical Society (N. S.) 13 (2), 160--162, 1985.

\bibitem{KoPrRo87}
Ph. G. Kolaitis, H. J. Pr\"omel, and B. L.  Rothschild,
$K_{\ell + 1}$-Free Graphs: Asymptotic Structure and a $0-1$ Law, Transactions
of the American Mathematical Society 303 (2), 637--671, 1987.

\bibitem{LP11} H. Lefmann and Y.  Person,
Exact Results on the Number of Restricted Edge Colorings for some
Families of Linear Hypergraphs, submitted.

\bibitem{LPS11} H. Lefmann, M. Schacht, and Y. Person,
 A Structural Result for Hypergraphs 
With Many Restricted Edge Colorings, Journal of Combinatorics, to appear. 

\bibitem{LPRS} H. Lefmann, Y. Person, V. R\"odl, and M. Schacht,
On Colorings of Hypergraphs Without Monochromatic Fano Planes,
Combinatorics, Probability \& Computing 18, 803--818, 2009.

\bibitem{lovasz} L. Lov\'{a}sz, Kneser's Conjecture, Chromatic Number,
 and Homotopy, Journal of Combinatorial Theory Ser. A, 25, 319--324, 1978.

\bibitem{NaRo01} B. Nagle, and V. R\"odl,  The Asymptotic Number of Triple
Systems Not Containing a Fixed One, Discrete Mathematics 235 (1-3),
271--290, 2001. (Combinatorics, Prague 1998)

\bibitem{NaRoSch06} B. Nagle, V. R\"odl, and M. Schacht, Extremal
Hypergraph Problems and the Regularity Method, Topics in Discrete
Mathematics, Algorithms and Combinatorics, vol. 26, Springer, Berlin,
247--278, 2006.

\bibitem{PYxx}O. Pikhurko and Z. B. Yilma, The Maximum Number of $K_3$-Free
and $K_4$-Free Edge $4$-Colorings, submitted.

\bibitem{simonovits} M. Simonovits, A Method for Solving Extremal 
Problems in Graph Theory, Stability Problems, Theory of Graphs (Proc. Colloq., Tihany, 1966), Academic Press, New York, 279--319, 1968.

\bibitem{Yu96} R. Yuster, The Number of Edge Colorings With No
Monochromatic Triangle, Journal
of Graph Theory 21 (4), 441--452, 1996.

\bibitem{ziegler} G. M. Ziegler, Generalized Kneser Coloring Theorems
 with Combinatorial Proofs, Inventiones Mathematicae 147, 671--691, 2002.

\end{thebibliography}
\end{document}